\numberwithin{equation}{section} 
\newtheorem{thm}{Theorem}[chapter]
\newtheorem{lem}[thm]{Lemma}
\newtheorem{cor}[thm]{Corollary}
\newtheorem{prop}[thm]{Proposition}
\newtheorem{definition}{Definition}
\newtheorem{remark}{Remark}
\newcommand{\pr}{\mathbf{P}}
\newcommand{\E}{\mathbf{E}}
\newcommand{\R}{\mathbb{R}}
\newcommand{\Z}{\mathbb{Z}}
\newcommand{\T}{\mathbb{T}}
\newcommand{\G}{\mathbb{G}}
\newcommand{\I}{\mathcal{I}}
\renewcommand{\L}{\mathbb{L}}
\newcommand{\M}{\mathcal{M}}
\newcommand{\Zm}{\Z^{\mbox{\bf{-}}}}
\newcommand{\Zmg}{Z^{\mbox{\bf{-}}}}
\title{\sc{CONTACT PROCESSES ON THE INTEGERS}}  
\author{  \textup{Achillefs Tzioufas} \vspace{10mm} \\ 
  \sc{Awarded the degree of Doctor of Philosophy} \\
      \sc{ on completion of research in the Department of }\\
        \sc{ Actuarial Mathematics and Statistics, School of }\\
         \sc{ Mathematical and Computer Sciences,} \\
        \sc{ Heriot-Watt University}\\
    \vspace{25mm}\\ 
    \sc{Advisor:} \textup{Stan Zachary } \vspace{5mm} \\  
\sc{ External Examiner:} \textup{Thomas Mountford }\\
\sc{ Internal Examiner }: \textup{Sergey Foss } 
\vspace{15mm} \\
}
\date{\sc{November} 2011}
\begin{document}

\pagenumbering{roman}

\maketitle
\chapter*{Abstract}
\thispagestyle{empty}
The three state contact process is the modification of the contact process at rate $\mu$ in which first infections occur at rate $\lambda$ instead. Chapters 2 and 3 consider the three state contact process on (graphs that have as set of sites) the integers with nearest neighbours interaction (that is, edges are placed among sites at Euclidean distance one apart). Results in Chapter 2 are meant to illustrate regularity of the growth of the process under the assumption that $\mu \geq \lambda$, that is, reverse immunization. While in Chapter 3 two results regarding the convergence rates of the process are given.  Chapter 4 is concerned with the i.i.d.\ behaviour of the right endpoint of contact processes on the integers with symmetric, translation invariant interaction. Finally, Chapter 5 is concerned with two monotonicity properties of the three state contact process. 
\chapter*{Acknowledgments}
\thispagestyle{empty}
The author wishes to thank Stan Zachary for research advising and valuable discussions on parts of this work. He also wishes to thank Thomas Mountford for comments on the previous version, and especially for pointing out an error in Theorem 3.4 thereof. This work was financially supported in part by a Heriot-Watt University studentship.



\newpage
\pagenumbering{roman}\tableofcontents

\chapter{Introduction} 

\pagenumbering{arabic}
The purpose of this chapter is to give an overview of all important results in the thesis and their relation to other work in the field, which we firstly report. An introductory section is also given in each of the remaining chapters.


\section{Model description}

The three state contact process is described as a model for the spread of a contagious disease over a population, which is comprised by individuals located at the set sites of a simple and locally finite graph. If an edge is present at the graph the individuals associated to the sites are considered as neighbouring. The disease is transmitted through contacts between infected individuals and their susceptible (healthy) neighbours, hence the name of the process. One can also think of contacts as the emission of a pathogenic bacterium or a microscopic particle. 

At all times each individual is assigned one of the three plausible states: infected, susceptible and never infected, and, susceptible and previously infected. That is, individuals can be either infected or susceptible, however, there are two states of susceptibility. At any time the overall state of the process is given by the collection of all states, which is referred to as the configuration of the process.

The infinitesimal dynamics for the evolution of the \textit{three state contact process} can then be described as follows: (i) Infected individuals make contact at rate $\lambda$ or $\mu$ with each of their neighbouring susceptible individuals depending on whether they are susceptible and never infected or susceptible and previously infected, respectively. (ii) At rate 1 infected individuals recover, hence becoming susceptible and previously infected. (For the purposes of modeling, note that (i) imposes that infected individuals are simultaneously infectious). 




The case that $\lambda = \mu$, where the two susceptibility states are effectively equated, is the well-known \textit{contact process}. Most of the motivation for studying the three state contact process stems mostly from the interest in understanding the change induced to the behaviour of the contact process by allowing first infections to occur at a different rate. Furthermore, in the boundary case $\mu=0$ the process reduces to the well-known \textit{standard spatial epidemic}; in this case individuals are are effectively removed from the population when becoming susceptible and previously infected for the first time (i.e.\ when recovery from their first infection occurs), hence the standard spatial epidemic is also referred to as the susceptible-infected-removed model in the literature.

Regarding the underlying structure of the population, in all chapters of the thesis but the last one the focus is on the process on simple graphs with set of sites the integers for which an edge is placed between sites at Euclidean distance not greater to some fixed finite constant. If this constant is specified to be one, the corresponding process is referred to as the process \textit{on the integers with nearest neighbours interaction}. If not, the corresponding family of processes (indexed by the constant) is referred to as the processes \textit{on the integers with symmetric, translation invariant interaction}. The last case is considered only in Chapter 4. To avoid confusion we should mention that the contact process on the integers with nearest neighbours interaction is usually referred to in the literature as the basic one-dimensional contact process; see e.g.\ part 1 in \cite{D95}. Also in this more general setting contact processes on the integers with symmetric, translation invariant interaction as considered here are referred to as uniform, symmetric, translation invariant, finite range one-dimensional contact processes.  


 
\section{Known results}

%

Some concepts that capture characteristics of interest exhibited by the model are introduced.  Typically, an infected individual is placed in the midst of susceptible and never infected individuals; this starting state for the process is referred to as the standard initial configuration. The process is said to \textit{survive} when there is a positive probability that the disease will be present in the population for all times, while otherwise, the process is said to \textit{die out}. 

Let $\mathbb{L}^{d}$ be the graph with set of sites the $d$-dimensional integer lattice at which edges are placed among sites at Euclidean distance one apart.  It is known that the contact process on $\mathbb{L}^{d}$ exhibits a phase transition phenomenon for all $d\geq1$. That is, there is a critical value $\mu_{c}(d)$, $0<\mu_{c}(d)<\infty$, such that if $\mu< \mu_{c}(d)$, the process dies out, while if $\mu> \mu_{c}(d)$, the process survives. For an account of various results and proofs concerning the widely studied contact process we refer the reader to \cite{L}, \cite{D88} and \cite{L99}. Furthermore, it is known that the the standard spatial epidemic on $\mathbb{L}^{d}$ exhibits a phase transition phenomenon if and only if $d\geq2$, that is, letting $\lambda_{c}(d)$ denote the critical value of the standard spatial epidemic on $\mathbb{L}^{d}$, it is known that $0<\lambda_{c}(d)<\infty$ for all $d\geq2$, while $\lambda_{c}(1)=\infty$.  For standard results and proofs about this process we refer the reader to Chapter 9 in \cite{D88}. Note that the process on $\mathbb{L}^{1}$ is referred to here as the process on the integers with  neighbours interaction.


The three state contact process on $\mathbb{L}^{d}$ has been studied in Durrett and Schinazi \cite{DS} and in Stacey \cite{S}. The process is said to survive strongly when there is a positive probability that the initially infected individual is reinfected infinitely many times.  Theorem 4 in \cite{DS} implies that if $\mu > \mu_{c}(d)$ and $\lambda>0$, the process survives strongly. While Theorem 1.1 in \cite{S} provides that if $\mu < \mu_{c}(d)$ and $\lambda<\infty$, the process does not survive strongly. Furthermore, Theorem 3 in \cite{DS} provides that the process on the integers with nearest neighbours interaction (in other words, the process on $\mathbb{L}^{1}$) for $\mu < \mu_{c}(1)$  and $\lambda<\infty$ dies out---note however that for $d\geq2$ the process survives for $\lambda$ sufficiently large and $\mu=0$ since $\lambda_{c}(d)<\infty$; see also Theorem \ref{ForestFire} below for a related result.  


Finally, some results concerning the supercritical contact process on the integers with nearest neighbours interaction conditional on survival are described. Let  $r_{t}$ denote the rightmost infected site of the process at time $t$. Durrett \cite{D80} showed that the limit of the speed of $r_{t}$ is a strictly positive constant, that is $\displaystyle{\frac{r_{t}}{t} \rightarrow \alpha}$ as $t \rightarrow \infty$ and $\alpha>0$, almost surely. Galves and Presutti \cite{GP} proved the corresponding central limit theorem, that is, the limit as $t \rightarrow \infty$ of the distribution of $\displaystyle{\frac{r_{t}-\alpha t}{\sqrt{t}}}$ is the normal distribution. Later, Kuczek \cite{K} showed the existence of random space-time points, referred to as break points, at which regeneration of the stochastic behaviour of $r_{t}$ occurs, thus illustrating the i.i.d.\ behaviour of the growth of $r_{t}$. By additionally showing that break points occur at exponentially bounded distance, an alternative proof of the central limit theorem was provided there. Furthermore, Mountford and Sweet \cite{MS} adapted the argument of Kuczek and extended the central limit theorem for one-dimensional non-nearest neighbours, finite range contact processes (which is a class of processes that includes the contact processes on simple, locally finite graphs with set of sites the integers). 

\section{Overview of the thesis} 
In Chapter 2 the three state contact process on the integers with nearest neighbours interaction for $\mu>\mu_{c}(1)$ and $\lambda \leq \mu$ started from the standard initial configuration is considered. Theorem 4 in Durrett and Schinazi \cite{DS} implies in particular that the process survives; results given in this chapter are meant to illustrate regularity of the growth of the process conditioned to survive. The strong law and the central limit theorem for the rightmost infected site of the process are the main results. The technique of proof is based on the adaptation of Kuczek's argument to show a new type of break points given in Section 2.5. Also, for showing that the points occur at exponentially bounded distance, two exponential estimates for the process are proved in Section 2.4 by use of the comparison with oriented percolation result in \cite{DS}. Chapter 3 also considers the three state contact process on the integers with nearest neighbours interaction. For the case that $\mu>\mu_{c}(1)$ and $\lambda\leq \mu$, it is shown that the limit of the speed of the process is bounded above by $\lambda / \mu$ times that of the contact process on the integers with nearest neighbours interaction at rate $\mu$. For the case that $\mu<\mu_{c}(1)$ and $\lambda<\infty$, it is shown that the time to die out is exponentially bounded, thus, Theorem 3 in \cite{DS} is extended by a different method of proof.

Chapter 4 considers contact processes on the integers with symmetric, translation invariant interaction. In this chapter an elementary proof of the i.i.d.\ behaviour of the rightmost infected of the process that uses Mountford and Sweet's \cite{MS} adaptation of Kuczek's argument is presented. Based on a result from Durrett and Schonmann \cite{DS2}, a large deviations result for the set of infected sites related to the work in \cite{MS} is also given. 
Finally, Chapter 5 is concerned with two monotonicity properties of the three state contact process.



\chapter{On the growth of reverse immunization contact processes}\label{pap1}

\paragraph{Abstract:} The variation of the supercritical contact process on the integers with nearest neighbours interaction such that first infection occurs at a lower rate is considered; Theorem 4 in Durrett and Schinazi \cite{DS} implies in particular that the process survives with positive probability. Regarding the rightmost infected of the process started from one site infected and conditioned to survive, we specify a sequence of space-time points at which its behaviour regenerates and, thus, obtain the corresponding strong law and central limit theorem. We also extend complete convergence in this case. 

\vfill


\section{Introduction and main results}\label{S0}
We begin by defining a class of processes that includes the processes we are especially interested in. The three state contact process on the integers  with nearest neighbours interaction and parameters $(\lambda,\mu)$ is a continuous time Markov process $\zeta_{t}$ with state space $\{-1,0,1\}^{\Z}$, elements of which are called configurations and can be thought of as functions from $\Z$ to $\{-1,0,1\}$. The evolution of $\zeta_{t}$ is described locally as follows, transitions at each site $x$, $\zeta_{t}(x)$, occur according to the rules:
\begin{equation*}\label{rates}
\begin{array}{cl}
-1 \rightarrow 1 & \mbox{ at rate } \lambda \hspace{0.5mm} |\{ y = x-1,x+1 :\zeta_{t}(y)= 1\}|, \\
\mbox{ } 0 \rightarrow 1 & \mbox{ at rate } \mu \hspace{0.5mm} |\{ y = x-1,x+1 :\zeta_{t}(y)= 1\}|, \\   
\mbox{ } 1 \rightarrow 0 & \mbox{ at rate } 1,
\end{array}
\end{equation*}
for all times $t \geq0$, where $|B|$ denotes the cardinal of $B\subset \Z$.   Typically, the process started from configuration $\eta$ is denoted as $\zeta_{t}^{\eta}$. For general information about interacting particle systems such as the fact that the above rates specify a well-defined process, see Liggett \cite{L}. We note that the cases $\lambda = \mu$ and $\mu=0$ correspond to the extensively studied processes known as the contact process and as the forest fire model, respectively; an account of various related results and proofs can be found in Chapters 4, 9, and 10 of Durrett \cite{D88} and Part I of Liggett \cite{L99}. Furthermore in the literature various survival aspects of the three state contact process on the $d$-dimensional lattice were studied by Durrett and Schinazi \cite{DS} and by Stacey \cite{S}, the latter also includes results for the process on homogeneous trees. 


The process is thought of according to the following epidemiological interpretation. Given a configuration $\zeta$, each site  $x$  is regarded as infected if $\zeta(x) = 1$, as susceptible and never infected if $\zeta(x) = -1$ and, as susceptible and previously infected if $\zeta(x) = 0$. The \textit{standard initial configuration} is such that the origin is infected while all other sites are susceptible and never infected. We will use  $\zeta_{t}^{O}$ to denote the process started from the standard initial configuration. 
We say that the three state contact process \textit{survives} if $\pr(\zeta_{t}^{O} \mbox{ survives})>0$, where the event $\{\forall \hspace{0.5mm}t\geq0, \exists  \hspace{0.5mm} x: \zeta_{t}(x) =1\}$ is abbreviated as $\{\zeta_{t} \mbox{ survives}\}$. 

Note that for $\zeta_{t}^{O}$ transitions $-1 \rightarrow 1$,  $ 0 \rightarrow 1$, and $1 \rightarrow 0$ correspond respectively to initial infections,  subsequent infections and recoveries.  Accordingly, the initial infection of a site
induces a permanent alternation of the rate proportional to which it will be susceptible; thus, the rate either decreases, corresponding to (partial) immunization, or increases, i.e.\ the reverse occurs. Our results concern the three state contact process under the constraint that $\mu\geq\lambda$, this explains the title given to this chapter.
When modeling an epidemic,  the case that $\mu \leq \lambda$ could be a consequence of imperfect inoculation of individuals following their first exposure to the disease, while the case that $\mu \geq \lambda$ could be a consequence of debilitation of individuals caused by their first exposure to the disease. Specifically, tuberculosis and bronchitis are plausible examples of a disease that captures the latter characteristic.



When $(\lambda,\mu)$ are such that $\lambda = \mu$ the process is reduced to the well known contact process. In this case we will identify a configuration with the subset of $\Z$ that corresponds to the set of its infected sites, since states $-1$ and $0$ are effectively equivalent. Also, it is well known that the contact process exhibits a phase transition phenomenon, $\mu_{c}$ will denote critical value on the integers with nearest neighbours interaction, that is, $0<\mu_{c}<\infty$ and, if $\mu< \mu_{c}$ the process dies out while if $\mu> \mu_{c}$ the process survives. 


The three state contact process with parameters $(\lambda,\mu)$ such that $\mu > \mu_{c}$ and $\lambda>0$ is known to survive, see Durrett and Schinazi \cite{DS}. This chapter is concerned with the behaviour of the process when survival occurs and also $\mu\geq\lambda$. The reason for the additional assumption on the parameters is that the techniques of proof extensively use certain coupling results which hold in this case only (see section \ref{coupling}). 

The following theorem summarizes the main results of this chapter, 
in words, parts \textit{(i)} and \textit{(ii)}  are respectively a law of large numbers and the corresponding central limit theorem for the rightmost infected while parts \textit{(iii)} and \textit{(iv)} are respectively a law of large numbers and complete convergence for the set of infected sites of the process. For demonstrating our results we introduce some notation.  The standard normal distribution function is represented by $N(0,\sigma^{2}), \sigma^{2}>0,$ also, weak convergence of random variables and of set valued processes are denoted by "$\overset{w}{\rightarrow}$" and by "$\Rightarrow$" respectively. Further, we denote by $\bar{\nu}_{\mu}$ the upper invariant measure of the contact process with parameter $\mu$,  and by $\delta_{\emptyset}$ the probability measure that puts all mass on the empty set. (For general information about the upper invariant measure and weak convergence of set valued processes we refer to  pages 34-35 in Liggett \cite{L99}).
\begin{thm}\label{THEthm1} 
Consider $\zeta_{t}^{O}$ with parameters $(\lambda, \mu)$, and let $I_{t}= \{x: \zeta_{t}^{O}(x)=1\}$ and $r_{t} = \sup I_{t}$. If $(\lambda, \mu)$ are such that $\mu \geq \lambda>0$ and $\mu > \mu_{c}$ then there exists $\alpha>0$ such that conditional on $\{\zeta_{t}^{O} \mbox{\textup{ survives}}\}$,

(i) $\displaystyle{  \frac{r_{t}}{t} \rightarrow \alpha}$, almost surely;

(ii) $\displaystyle{ \frac{r_{t} - \alpha t}{\sqrt{t}} \overset{w}{\rightarrow} N(0,\sigma^{2})}$, for some $\sigma^{2}>0$;

(iii) let $\theta= \theta(\mu)$ be the density of $\bar{\nu}_{\mu}$, then, $\displaystyle{ \frac{|I_{t}|}{t} \rightarrow 2 \alpha \theta}$, almost surely. 

(iv) Let $\beta= \pr(\zeta_{t}^{O} \mbox{ survives})>0$, then, $\displaystyle{ I_{t} \Rightarrow (1-\beta) \delta_{\emptyset} + \beta \bar{\nu}_{\mu}}$.
\end{thm}
We comment on the proof of Theorem \ref{THEthm1}.The cornerstone for acquiring parts  \textit{(i)\textup{ and} (ii)} is to ascertain the existence of a sequence of space-time points, termed \textit{break points}, strictly increasing in both space and time, among which the behaviour of $r_{t}$ conditional on $\{\zeta_{t}^{O} \mbox{\textup{ survives}}\}$ stochastically replicates, these type of arguments have been established in Kuczek \cite{K}. Further, the proofs of parts \textit{(iii) \textup{and} (iv)} are based on variations of the arguments for the contact process case due to Durrett and Griffeath, see \cite{DG} and \cite{D80}, \cite{G}. We finally note that the technique of the proof for obtaining \textit{(i)} gives an alternative, elementary proof of the corresponding result for the contact process, see Theorem 1.4 in Durrett \cite{D80}. 

In the next section we introduce the graphical construction, we also present monotonicity and give some elementary coupling results. Section \ref{Sexp} is intended for the proof of two exponential estimates that we need for later. Section \ref{S31} is devoted to the study of break points, while in Section \ref{S32} we give the proof of Theorem \ref{THEthm1}.
%



\section{Preliminaries: The graphical construction}\label{grrep}
The graphical construction will be used in order to visualize the construction of various processes on the same probability space; we will repeatedly use it throughout this chapter.

Consider parameters $(\lambda,\mu)$ and, the other case being similar, suppose that $\mu \geq \lambda$.  To carry out our construction for all sites $x$ and $y=x-1,x+1$, let $(T_{n}^{x,y})_{n\geq1}$ and $(U_{n}^{x,y})_{n\geq1}$ be the event times of Poisson processes respectively at rates $\lambda$ and $\mu-\lambda$; further, let $(S_{n}^{x})_{n\geq 1}$ be the event times of a Poisson process at rate $1$. (All Poisson processes introduced are independent). 

Consider the space $\Z \times [0,\infty)$ thought of as giving a time line to each site of $\Z$; Cartesian product is denoted by $\times$. Given a realization of the before-mentioned ensemble of Poisson processes, we define the \textit{graphical construction} and $\zeta_{t}^{[\eta,s]}$,  $t\geq s$, three state contact process on $\Z$ with nearest neighbours interaction and parameters $(\lambda,\mu)$ started from $\eta$ at time $s\geq0$, i.e.\ $\zeta_{s}^{[\eta,s]} = \eta$, as follows. From each point $x \times T_{n}^{x,y}$ we place a directed $\lambda$-\textit{arrow} to $y \times T_{n}^{x,y}$; this indicates that at all times $t=T_{n}^{x,y}$, $t\geq s$, if $\zeta_{t-}^{[\eta,s]}(x)=1$ and $\zeta_{t-}^{[\eta,s]}(y)=0$ \textit{or} $\zeta_{t-}^{[\eta,s]}(y) = -1$ then we set $\zeta_{t}^{[\eta,s]}(y) = 1$ (where $\zeta_{t-}(x)$ denotes the limit of $\zeta_{t-\epsilon}(x)$ as $\epsilon\rightarrow 0$). From each point $x \times U_{n}^{x,y}$ we place a directed $(\mu-\lambda)$-\textit{arrow} to $y \times U_{n}^{x,y}$; this indicates that at any time $t=U_{n}^{x,y}$, $t\geq s$, if $\zeta_{t-}^{[\eta,s]}(x)=1$ and $\zeta_{t-}^{[\eta,s]}(y)=0$ then we set $\zeta_{t}^{[\eta,s]}(y) = 1$. While at each point $x \times S_{n}^{x}$ we place a \textit{recovery mark}; this indicates that at any time $t= S_{n}^{x}, t\geq s,$ if $\zeta_{t-}^{[\eta,s]}(x)=1$ then we set $\zeta_{t}^{[\eta,s]}(x) = 0$.  The reason we introduced the special marks is to make connection with percolation and hence the contact process, we define  the contact process $\xi_{t}^{A}$  with parameter $\mu$ started from $A \subset \Z$ as follows. We write $A \times 0 \rightarrow B \times t$, $t\geq 0$, if there exists a connected oriented path from $x \times 0$ to $y \times t$, for some $x \in A$ and $y \in B$, that moves along arrows (of either type) in the direction of the arrow and along time lines in increasing time direction without passing through a recovery mark, defining $\xi_{t}^{A} := \{x: A \times 0 \rightarrow x \times t\}$, $t\geq0$, we have that $(\xi_{t}^{A})$ is a set valued version of the contact process with parameter $\mu$ started from $A$ infected.

It is important to emphasize that the graphical construction, for fixed $(\lambda,\mu)$, defines all  $\zeta_{t}^{[\eta,s]}$,  $t\geq s$, for any configuration $\eta$ and time $s\geq0$, and all $\xi_{t}^{A}$, for any $A \subset \Z$, simultaneously on the same probability space, and hence provides a coupling of all these processes.

\begin{definition}\label{calI}
We shall denote by $\mathcal{I}(\zeta)$ the set of infected sites of any given configuration $\zeta$, that is, $\mathcal{I}(\zeta) = \{y \in \Z:\zeta(y) =1\}$.
\end{definition}

To simplify our notation, consistently to Section \ref{S0}, $\zeta_{t}^{[\eta,0]}$ is denoted as $\zeta_{t}^{\eta}$, and, letting $\eta_{0}$ be the standard initial configuration, $\zeta_{t}^{[\eta_{0},0]}$ is denoted as $\zeta_{t}^{O}$. Additionally, the event $\{\mathcal{I}(\zeta_{t}^{[\eta,s]}) \not= \emptyset  \mbox{ for all } t\geq s\}$ will be abbreviated below as  $\{\zeta_{t}^{[\eta,s]} \mbox{ survives}\}$. 

Finally, we note that we have produced a version of $\zeta_{t}^{\eta}$ via a countable collection of Poisson processes, this provides well-definedness of the process. Indeed, whenever one assumes that $|\mathcal{I}(\eta)|<\infty$, this is a consequence of standard Markov chains having an almost surely countable state space; otherwise, this is provided by an argument due to Harris \cite{H}, see Theorem 2.1 in Durrett \cite{D95}.

\section{Monotonicity, coupling results}\label{coupling}

To introduce monotonicity concepts, we endow the space of configurations $\{-1,0,1\}^{\Z}$ with the \textit{component-wise partial order}, $\mbox{i.e.}$, for any two configurations $\eta_{1}, \eta_{2}$ we have that $\eta_{1} \leq \eta_{2}$ whenever $\eta_{1}(x) \leq \eta_{2}(x)$ for all $x \in \Z$. We also note that for the conclusion of all statements in this section to hold the condition that $\mu \geq \lambda$ is necessary, this is straightforward to see. The following theorem is a known result, for a proof we refer to section 5 in Stacey \cite{S}, see also Theorem \ref{PIPmon} in the last chapter. 

\begin{thm}\label{moninit1}
Let $\eta$ and $\eta'$ be any two configurations such that $\eta\leq \eta'$. Consider the respective three state contact processes $\zeta_{t}^{\eta}$ and $\zeta_{t}^{\eta'}$ with the same parameters $(\lambda,\mu)$ coupled by the graphical construction. For all $(\lambda,\mu)$ such that $\mu \geq \lambda>0$, we have that $\zeta_{t}^{\eta} \leq \zeta_{t}^{\eta'}$ holds. We refer to this property as monotonicity in the initial configuration.
\end{thm}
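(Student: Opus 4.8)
The plan is to prove the stronger pathwise statement: with $\zeta_t^{\eta}$ and $\zeta_t^{\eta'}$ driven by one and the same realization of the Poisson families $(T_n^{x,y})$, $(U_n^{x,y})$, $(S_n^x)$ of Section \ref{grrep}, the componentwise inequality $\zeta_t^{\eta}(x) \le \zeta_t^{\eta'}(x)$ holds for every $x$ and every $t \ge 0$. Fix $N$ and $t$; almost surely only finitely many marks of the construction fall in $[-N,N] \times [0,t]$, so the two configurations restricted to $[-N,N]$ change only at finitely many times $0 < t_1 < t_2 < \cdots$ and are constant between consecutive such times. I would argue by induction on $k$: the base case $\zeta_0^{\eta} = \eta \le \eta' = \zeta_0^{\eta'}$ is the hypothesis, and assuming $\zeta_{t_k-}^{\eta} \le \zeta_{t_k-}^{\eta'}$ it suffices to check that the single transition occurring at $t_k$ preserves the inequality.

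That check is a short case analysis at the unique site $w$ whose value is updated at $t_k$, using $\zeta_{t_k-}^{\eta}(w) \le \zeta_{t_k-}^{\eta'}(w)$ and, for arrows, $\zeta_{t_k-}^{\eta}(x) \le \zeta_{t_k-}^{\eta'}(x)$ at the source $x$. For a recovery mark at $w$: if $\zeta_{t_k-}^{\eta}(w) = 1$ then $\zeta_{t_k-}^{\eta'}(w) = 1$ (the only state $\ge 1$) and both fall to $0$; otherwise $\zeta_{t_k}^{\eta}(w) = \zeta_{t_k-}^{\eta}(w)$ while the right-hand value cannot drop below it, since a recovery only lowers a $1$ to a $0$. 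For a $\lambda$-arrow $x \to w$: the lower process moves only when $\zeta_{t_k-}^{\eta}(x) = 1$ and $\zeta_{t_k-}^{\eta}(w) \in \{-1,0\}$, in which case $\zeta_{t_k}^{\eta}(w) = 1$; but then $\zeta_{t_k-}^{\eta'}(x) = 1$ and $\zeta_{t_k-}^{\eta'}(w) \in \{-1,0,1\}$, so the upper process ends with $\zeta_{t_k}^{\eta'}(w) = 1$ as well. For a $(\mu-\lambda)$-arrow $x \to w$: the lower process moves only when $\zeta_{t_k-}^{\eta}(x) = 1$ and $\zeta_{t_k-}^{\eta}(w) = 0$, whence $\zeta_{t_k}^{\eta}(w) = 1$; then $\zeta_{t_k-}^{\eta'}(x) = 1$ and $\zeta_{t_k-}^{\eta'}(w) \in \{0,1\}$, so again $\zeta_{t_k}^{\eta'}(w) = 1$. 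In every remaining configuration the lower process is unchanged at $w$ and the upper value cannot decrease past it, so the inequality is maintained; this closes the induction, and letting $N \to \infty$ yields the claim whenever $|\mathcal{I}(\eta)| < \infty$.

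For a general initial configuration one passes to the limit along the finite truncations of $\mathcal{I}(\eta)$ and $\mathcal{I}(\eta')$, using the finite-set case just proved together with the approximation argument underlying the well-definedness of $\zeta_t^{\eta}$ for infinite $\mathcal{I}(\eta)$ recalled at the end of Section \ref{grrep}. The one genuinely delicate point is the $(\mu-\lambda)$-arrow case above: this is exactly where $\mu \ge \lambda$ is used, since splitting the rate-$\mu$ infections so that the surplus arrows act only on sites in state $0$ is what makes them order-compatible. Were $\mu < \lambda$, the natural splitting would instead have surplus arrows acting on sites in state $-1$ only, and such an arrow could infect a site of the lower process while leaving the corresponding state-$0$ site of the upper process untouched; so monotonicity in the initial configuration genuinely requires the stated constraint, and beyond this observation there is nothing substantial left to overcome.
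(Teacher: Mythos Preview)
Your case analysis is correct and captures the essential content; in particular you correctly isolate the $(\mu-\lambda)$-arrow as the only place where $\mu\ge\lambda$ is used, and your remark that the opposite splitting would break the order is exactly the obstruction.

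The paper does not actually supply its own proof of this statement in Chapter~1: it simply refers to section~5 of Stacey. (Elsewhere in the thesis, Chapter~4, a strictly more general monotonicity---Theorem~\ref{PIPmon}---is proved, but by a different device: one writes down the generator of the \emph{pair} process $(\zeta_t'(x),\zeta_t(x))$, the ``basic coupling,'' with explicit rates for each of the six ordered pairs of states, and observes that all rates are nonnegative precisely when $\mu'\ge\lambda$. Your argument is the pathwise/graphical counterpart of the same idea; both are standard, and both locate the hypothesis in the same place.)

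One small wobble in the write-up: fixing $N$ and inducting over the change times \emph{within} $[-N,N]$ is not quite self-contained, because an arrow from $N{+}1$ into $N$ requires the state at $N{+}1$, and sites outside $[-N,N]$ may change between your $t_k$'s, so the full inductive hypothesis $\zeta_{t_k-}^{\eta}\le\zeta_{t_k-}^{\eta'}$ is not carried from $t_{k-1}$ to $t_k$ by the argument as written. The clean repairs are either to invoke Harris's localization directly (for a.e.\ realization and each $t$ there is a random finite interval outside which nothing influences the target box by time $t$) or, in the finite-$\mathcal{I}$ case, to use that the coupled pair is a countable-state chain with a.s.\ finitely many total jumps by time $t$ and induct over all of them. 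You gesture at exactly these points in your last paragraph, so this is a matter of tidying the presentation rather than a genuine gap.
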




For the remainder of this section we give various coupling results concerning $\zeta_{t}^{O}$, the three state contact process with parameters $(\lambda,\mu)$ started from the standard initial configuration, let $I_{t} = \mathcal{I}(\zeta_{t}^{O})$, $r_{t}= \sup I_{t}$ and $l_{t}= \inf I_{t}$. We note that the nearest neighbours assumption in all of the subsequent lemmas in this section is crucial.

The next lemma will be used repeatedly throughout this chapter, its proof given below is a simple extension of a well known result for the contact process on $\Z$ with nearest neighbours interaction, see  $\mbox{e.g.}$ $\cite{D80}$. 


\begin{lem}\label{piprendcoup1}
Let $\eta'$ be any configuration such that $\eta'(0)= 1$ and $\eta'(x)= -1$ for all $x\geq1$. Consider   $\zeta_{t}^{\eta'}$ with parameters $(\lambda,\mu)$ and let $r_{t}'= \sup\mathcal{I}(\zeta_{t}^{\eta'})$. For  $(\lambda,\mu)$ such that $\mu \geq \lambda$, if $\zeta_{t}^{O}$ and $\zeta_{t}^{\eta'}$ are coupled by the graphical construction then the following property holds, for all $t\geq 0$,
\begin{equation*}\label{couprend2}
r_{t} = r_{t}'  \mbox{ on } \{ I_{t} \not= \emptyset\}.
\end{equation*} 
\end{lem}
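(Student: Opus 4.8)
The plan is to compare the two processes $\zeta_t^{O}$ and $\zeta_t^{\eta'}$ site by site using the coupling given by the graphical construction, exploiting that they differ only on the sites $x \leq -1$: indeed $\eta_0(x) = -1$ for all $x \neq 0$ while $\eta'(x) = -1$ for $x \geq 1$ but $\eta'(x)$ may be arbitrary (hence $\geq -1$) for $x \leq -1$. In particular $\eta_0 \leq \eta'$, so Theorem \ref{moninit1} already gives $\zeta_t^{O} \leq \zeta_t^{\eta'}$, and therefore $I_t \subseteq \mathcal{I}(\zeta_t^{\eta'})$ and $r_t \leq r_t'$ for all $t \geq 0$. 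The content of the lemma is thus the reverse inequality $r_t' \leq r_t$ on the event $\{I_t \neq \emptyset\}$, i.e.\ that the extra infection present in $\zeta^{\eta'}$ to the left of the origin cannot push the rightmost infected site strictly to the right of $r_t$, as long as $\zeta^{O}$ itself still has some infected site.

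The key step is a ``no-crossing'' or barrier argument in the nearest-neighbours setting. I would argue that on $\{I_t \neq \emptyset\}$ the rightmost infected $r_t$ of $\zeta^{O}$ acts as a moving barrier that the influence of the left-side discrepancies cannot jump over. Concretely: consider the process $\zeta_t^{\eta''}$ started from the configuration $\eta''$ that agrees with $\eta'$ on $x \geq 0$ (so $\eta''(0) = 1$, $\eta''(x) = -1$ for $x \geq 1$) but has, say, $\eta''(x) = -1$ for $x \leq -1$ as well — this is essentially $\zeta^{O}$ up to relabeling, and in any case $\mathcal{I}(\eta'') = \{0\} = \mathcal{I}(\eta_0)$, so $\zeta_t^{\eta''}$ and $\zeta_t^{O}$ have the same set of infected sites under the coupling. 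Now I claim that, as long as $I_t \neq \emptyset$, the sites $\{r_t, r_t+1, r_t+2, \dots\}$ are ``shielded'': any oriented infection path in the graphical construction that reaches a site $y > r_t$ at time $t$ must pass through the time line of the site $r_s + 1$ at some moment, and to do so it must cross the contiguous block of non-infected sites sitting immediately to the right of the rightmost infected of $\zeta^{O}$. Because arrows are nearest-neighbour, any such path must, at the moment it first moves to the right of $r_s$, emanate from a site that is infected in $\zeta^{O}$; hence the rightmost infected of $\zeta^{\eta'}$ to the right of $r_t$ can only be created from an infected site of $\zeta^{O}$, forcing $r_t' \le r_t$.

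Carrying this out rigorously, I would set $\tau = \sup\{ s \le t : I_s \neq \emptyset\}$ (on $\{I_t \neq \emptyset\}$ we have $\tau = t$ and $I_s \neq \emptyset$ for all $s \le t$), and track the first time, if any, that $r_s' > r_s$. At that instant there is a $\lambda$- or $(\mu-\lambda)$-arrow from some site $x$ to $x+1 = r_{s'}' > r_s'$ with $x$ infected in $\zeta^{\eta'}$ but $x \ge r_s + 1$ not infected in $\zeta^{O}$; since just before this instant $r_{s-}' = r_{s-}$, we get $x \le r_{s-} = r_{s-}'$, a contradiction with $x+1 > r_{s-}'$ unless the arrow moves leftward, which it doesn't. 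One must also handle recovery marks at $r_s$ itself (which can only decrease $r_t$ or leave things unchanged) and the possibility $I_s = \emptyset$ momentarily — but that is exactly excluded by restricting to $\{I_t \neq \emptyset\}$ and using that $I$ has no jumps reappearing from the empty set. I expect the main obstacle to be the bookkeeping at jump times: making precise that ``the first time $r'$ exceeds $r$'' is well defined (right-continuity of both processes, finitely many marks in any bounded space-time box touching the relevant sites up to the stopping time) and that at that time the only mechanism available is an arrow from an infected site of $\zeta^{O}$, which by monotonicity lies at or below $r_s$ — this is where the nearest-neighbour hypothesis and $\mu \ge \lambda$ (ensuring the $\lambda$-arrows available to $\zeta^{O}$ are a subset of those driving $\zeta^{\eta'}$ out of state $-1$) are genuinely used, just as the paragraph preceding the lemma warns.
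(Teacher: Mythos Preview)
Your reduction via monotonicity to $r_t \le r_t'$ is fine, and looking at the first time $s$ with $r_s' > r_s$ is natural. The gap is in your treatment of recoveries. You write that a recovery mark at $r_s$ ``can only decrease $r_t$ or leave things unchanged,'' but this is precisely the dangerous transition: when a recovery hits the common endpoint $r_{s-}=r_{s-}'$, each process drops to its \emph{own} second-rightmost infected site, and nothing you have established prevents $\zeta^{\eta'}$ from having an infected site at some $y$ with $r_s < y < r_{s-}$ that $\zeta^{O}$ lacks. In that scenario the first separation $r_s' > r_s$ occurs at a recovery mark, with no arrow involved at all, and the contradiction you set up (which presupposes an arrow from some $x$ to $x+1$ at the moment of separation) never materialises. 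Tracking only the invariant $r_u = r_u'$ for $u<s$ gives you no information about the interior of $[l_{s-}, r_{s-})$, which is exactly where such a $y$ would sit.

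The paper avoids this by proving the stronger statement $\zeta_t^{O}(x) = \zeta_t^{\eta'}(x)$ for all $x \ge l_t$ on $\{I_t \ne \emptyset\}$, and checking that every transition preserves it. With this invariant a recovery at $r_{s-}$ is automatically harmless, since the two configurations already agree on all of $[l_{s-},\infty)$ both before and after. The only nontrivial transition to check is a \emph{decrease} of $l_t$: one must verify that when site $l_{t-}-1$ enters the range it carries the same state in both processes. Monotonicity restricts the possible pairs $(\zeta_{t-}^{O}(l_{t-}-1),\, \zeta_{t-}^{\eta'}(l_{t-}-1))$, and the delicate case is $(-1,0)$; there the arrow causing the decrease must be a $\lambda$-arrow (it infects a $-1$ in $\zeta^O$), and $\lambda$-arrows also infect $0$'s, so both processes end up with a $1$ at that site. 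This is where $\mu \ge \lambda$ and the nearest-neighbour assumption are actually used---at the \emph{left} edge of $I_t$, not at the right edge as your final sentence suggests.
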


\begin{proof}
We prove the following stronger statement, for all $t\geq 0$, 
\begin{equation}\label{couprend1}
\zeta_{t}^{O}(x) = \zeta_{t}^{\eta'}(x)   	\mbox{ for all } x \geq l_{t}, \mbox{ on }  \{I_{t} \not= \emptyset\}.
\end{equation} 
For $t=0$ (\ref{couprend1}) holds, we show that all possible transitions preserve it.  An increase of $l_{t}$ (i.e., a recovery mark at $l_{t}\times t$) as well as any transition changing the state of any site $x$ such that $ x\geq l_{t}+1$  preserve $(\ref{couprend1})$.  It remains to examine transitions that decrease $l_{t}$, by monotonicity in the initial configuration we have that the possible pairs of $(\zeta_{t}^{O}(l_{t}-1), \zeta_{t}^{\eta'}(l_{t}-1))$ are the following $(-1,0),(-1,1),(0,0),(0,1)$. In the first pair case $(\ref{couprend1})$ is preserved because $\lambda$-arrows are used for transitions $-1 \rightarrow 1$ as well as $ 0 \rightarrow 1$, while in the three remaining cases this is obvious, the proof of $(\ref{couprend1})$ is thus complete. 
\end{proof}
The next lemma will be used in the proof of the two final parts of Theorem $\ref{THEthm1}$, its proof is a simple variant of that of Lemma $\ref{piprendcoup1}$ and is thus omitted.

\begin{lem}\label{cccoup}
Let $\xi_{t}^{\Z}$ be the contact process on $\Z$ with nearest neighbours interaction and parameter $\mu$ started from $\Z$. For $(\lambda,\mu)$ such that $\mu \geq \lambda>0$, if $\zeta_{t}^{O}$ and $\xi_{t}^{\Z}$ are coupled by the graphical construction the following property holds, for all $t\geq 0$,
\begin{equation*}
I_{t} =  \xi_{t}^{\Z} \cap [l_{t},r_{t}] \mbox{ \textup{on} } \{I_{t} \not= \emptyset\}.
\end{equation*}
\end{lem}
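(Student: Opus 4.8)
The plan is to mimic the proof of Lemma \ref{piprendcoup1}, tracking the agreement between $\zeta_t^O$ and $\xi_t^{\Z}$ on the interval $[l_t, r_t]$ rather than the agreement of two three-state processes on a half-line. First I would reduce to a statement about configurations: I claim that, on $\{I_t \neq \emptyset\}$, we have $\zeta_t^O(x) = 1$ if and only if $x \in \xi_t^{\Z}$ and $x \in [l_t, r_t]$. Since $I_t = \mathcal{I}(\zeta_t^O) = \{x : \zeta_t^O(x) = 1\}$ and $I_t \subseteq [l_t, r_t]$ by definition of $l_t, r_t$, one inclusion ($I_t \subseteq \xi_t^{\Z} \cap [l_t, r_t]$) amounts to showing that every site infected in $\zeta_t^O$ is also infected in $\xi_t^{\Z}$; this follows from the graphical construction, because $\xi_t^{\Z}$ uses arrows of \emph{either} type to transmit infection and starts from the maximal set $\Z$, so any infection path witnessing $\zeta_t^O(x) = 1$ is also a valid percolation path from $\Z \times 0$ to $x \times t$. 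The substantive content is the reverse inclusion: if $x \in \xi_t^{\Z}$ and $l_t \le x \le r_t$ then $\zeta_t^O(x) = 1$.

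For the reverse inclusion I would argue by induction on the (locally finite) sequence of transition times of the graphical construction restricted to a large box, exactly as in Lemma \ref{piprendcoup1}. At $t = 0$ the identity is immediate since $I_0 = \{0\} = \xi_0^{\Z} \cap [0,0]$. For the inductive step, the delicate transitions are again those that extend the infected region of $\zeta_t^O$ past its current boundary, i.e.\ an arrow (of $\lambda$-type or $(\mu-\lambda)$-type) emanating from $r_{t-} \times t$ to $(r_{t-}+1)\times t$ (and symmetrically at $l_{t-}$). The point is that for $\xi^{\Z}$, both arrow types always transmit, whereas for $\zeta^O$ a $(\mu-\lambda)$-arrow only transmits onto a site in state $0$, not state $-1$. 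So I must rule out the configuration in which the target site $r_{t-}+1$ is in state $-1$ in $\zeta^O$ (never previously infected) while a $(\mu-\lambda)$-arrow fires there; in that case $\xi^{\Z}$ would gain the site but $\zeta^O$ would not, breaking the identity. However, since $r_{t-}+1 = r_t$ would then \emph{not} actually be the new rightmost infected of $\zeta^O$ (no infection occurred), $r_t$ would stay equal to $r_{t-}$, and the claimed interval does not include $r_{t-}+1$ — so the identity is in fact preserved. The remaining case is a $\lambda$-arrow (which transmits in $\zeta^O$ regardless of whether the target is in state $-1$ or $0$): then both processes gain the site $r_{t-}+1$, and $r_t = r_{t-}+1$, so the interval grows to match. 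Recovery marks at $l_{t-}$ or $r_{t-}$, and any transition strictly inside $[l_{t-}, r_{t-}]$, are handled by the inductive hypothesis together with the elementary observation that inside the current block the states $-1$ and $0$ never occur for sites of $I_t$ and the two processes evolve by the same marks.

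The main obstacle — and the reason one must be a little careful rather than simply quoting Lemma \ref{piprendcoup1} — is precisely the bookkeeping at the two moving endpoints $l_t$ and $r_t$: one has to verify that whenever $\xi^{\Z}$ acquires a boundary site that $\zeta^O$ does not (the $(\mu-\lambda)$-arrow-onto-state-$(-1)$ scenario), the interval $[l_t, r_t]$ simultaneously fails to include that site, so that no discrepancy is recorded. This is where the hypothesis $\mu \ge \lambda$ and the nearest-neighbour structure enter: nearest-neighbour interactions guarantee that new infections of $\zeta^O$ happen only at $l_{t-} - 1$ or $r_{t-} + 1$, so there is a single site to check at each end; and the graphical construction's convention of routing transitions $-1 \to 1$ and $0 \to 1$ through the \emph{same} $\lambda$-arrows (with the extra $(\mu-\lambda)$-arrows doing only $0 \to 1$) is exactly what makes the endpoint analysis go through. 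Once the configuration-level identity is established for all $t$ on $\{I_t \neq \emptyset\}$, intersecting with $\{x : \zeta_t^O(x) = 1\}$ and recalling $I_t \subseteq [l_t, r_t]$ yields $I_t = \xi_t^{\Z} \cap [l_t, r_t]$, completing the proof.
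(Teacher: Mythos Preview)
Your argument is correct and is precisely the variant of Lemma~\ref{piprendcoup1} that the paper has in mind (the paper omits the proof, noting only that it is such a variant). The one phrase to tighten is your handling of interior transitions: what you need there is that no site in $[l_t,r_t]$ is in state $-1$ (nearest-neighbour spread from the origin forces every such site to have been previously infected), so that $(\mu-\lambda)$-arrows act identically for $\zeta_t^O$ and $\xi_t^{\Z}$ inside the block---your sentence ``the states $-1$ and $0$ never occur for sites of $I_t$'' is tautological and not quite the needed observation.
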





\begin{definition}\label{defetak}
For each integer $k$, let $\eta_{k}$ be the configuration such that $\eta_{k}(k) = 1$ and $\eta_{k}(y) =-1$ for all $y \not= k$.
\end{definition}

Our final coupling result will be used in the definition of break points in Section $\ref{S31}$. 
To state the lemma, define the stopping times $\tau_{k} = \inf\{t: r_{t}=k\}$, $k\geq1$, and let $R = \sup_{t\geq0}r_{t}$.

\begin{lem}\label{Sk}
Let $(\lambda,\mu)$ be such that $\mu \geq \lambda>0$ and consider the graphical construction. Consider also the processes $\zeta_{t}^{[\eta_{k},\tau_{k}]}$, $k\geq1$, started at times $\tau_{k}$ from $\eta_{k}$, as in Definition \ref{defetak}. Then, for all $\mbox{ }k=1,\dots,R$ the following property holds, 
\begin{equation*}\label{eqSk}
\zeta_{t}^{O} \geq \zeta_{t}^{[\eta_{k}, \tau_{k}]}, \mbox{ for all } t \geq \tau_{k}.
\end{equation*}
\end{lem}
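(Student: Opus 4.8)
The plan is to prove this by induction on $k$, at each step showing that the configuration $\zeta_t^O$ dominates the configuration $\zeta_t^{[\eta_k,\tau_k]}$ from time $\tau_k$ onwards, using the fact that at time $\tau_k$ the site $k$ is the rightmost infected of $\zeta_t^O$ and all sites to the right of $k$ are in state $-1$ at that time (since $r_s < k$ for $s < \tau_k$ means those sites have never been touched by an infection path and hence are still $-1$). First I would observe that at time $\tau_k$ we have $\zeta_{\tau_k}^O(k) = 1 = \eta_k(k)$ and $\zeta_{\tau_k}^O(y) = -1 = \eta_k(y)$ for all $y > k$, while for $y < k$ trivially $\zeta_{\tau_k}^O(y) \geq -1 = \eta_k(y)$; hence $\zeta_{\tau_k}^O \geq \eta_k = \zeta_{\tau_k}^{[\eta_k,\tau_k]}$.

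The core of the argument is then to propagate this inequality forward in time under the coupling given by the graphical construction. This is essentially the same mechanism as in Theorem \ref{moninit1} (monotonicity in the initial configuration), except that here the two processes do not start at the same time; but since $\zeta_t^{[\eta_k,\tau_k]}$ only evolves for $t \geq \tau_k$ and we have established domination at the common time $\tau_k$, the standard transition-by-transition check applies verbatim. Concretely, I would check that each type of mark in the graphical construction ($\lambda$-arrow, $(\mu-\lambda)$-arrow, recovery mark) preserves the relation $\zeta_t^O(x) \geq \zeta_t^{[\eta_k,\tau_k]}(x)$ for all $x$: a recovery mark at $x$ sets both coordinates to at most what they were (and if the dominating process drops to $0$ the dominated one was already $\leq 0$ there, or drops too); a $\lambda$-arrow or $(\mu-\lambda)$-arrow from $x$ to $y$ can only raise coordinates to $1$, and it raises the $y$-coordinate of the dominated process only if the dominated process has $x$ infected, which by the inductive hypothesis forces $x$ infected in the dominating process as well, so the dominating $y$-coordinate is also set to (or already at) $1$. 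The only subtlety, already handled in the proof of Lemma \ref{piprendcoup1}, is that a $\lambda$-arrow acts on both states $-1$ and $0$ whereas a $(\mu-\lambda)$-arrow acts only on state $0$; since $-1 < 0$, one must verify that when the dominated process sits at $0$ and the dominating process sits at $-1$ at the target site, a $(\mu-\lambda)$-arrow could create an inversion — but this configuration cannot arise, precisely because $\zeta_t^O \geq \zeta_t^{[\eta_k,\tau_k]}$ is the relation being maintained, so the dominating process is never strictly below the dominated one.

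The main obstacle, such as it is, is purely bookkeeping: one must be careful that the relation being preserved is the full componentwise order $\zeta_t^O \geq \zeta_t^{[\eta_k,\tau_k]}$ and not merely an inclusion of infected sets, because the distinction between states $-1$ and $0$ matters for which arrows fire, and one needs the hypothesis $\mu \geq \lambda$ exactly so that the two arrow types are nested in the way described above. Since this is the identical mechanism as in Theorem \ref{moninit1} (only with a shifted starting time, which changes nothing because the dominated process is constant before $\tau_k$ and irrelevant there), I would simply invoke that monotonicity argument: having shown $\zeta_{\tau_k}^O \geq \zeta_{\tau_k}^{[\eta_k,\tau_k]}$, monotonicity in the initial configuration applied from time $\tau_k$ yields $\zeta_t^O \geq \zeta_t^{[\eta_k,\tau_k]}$ for all $t \geq \tau_k$, which is the claim. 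Finally I note $k$ ranges over $1,\dots,R$ precisely so that $\tau_k < \infty$, ensuring the times $\tau_k$ at which we start the auxiliary processes are well defined.
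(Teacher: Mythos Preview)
Your proof is correct and follows the same approach as the paper: observe that $\zeta_{\tau_k}^O(k)=1$ and $\eta_k(y)=-1$ for all $y\neq k$, so $\zeta_{\tau_k}^O\geq\eta_k$ trivially, then invoke monotonicity in the initial configuration (Theorem~\ref{moninit1}) from time $\tau_k$. Two minor remarks: the ``induction on $k$'' framing is never actually used (each $k$ is handled independently), and the transition-by-transition verification of monotonicity is redundant once you cite Theorem~\ref{moninit1}; the paper's proof is accordingly one sentence.
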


\begin{proof}
We have that $\zeta_{\tau_{k}}^{O}(k) =1$, because $\eta_{k}$ is the least infectious configuration such that $\eta_{k}(k)=1$, we also have $\zeta^{O}_{\tau_{k}} \geq \eta_{k}$ $\mbox{ for all }k=1,\dots,R,$ by monotonicity in the initial configuration the proof is complete. 
\end{proof}

\section{Exponential estimates}\label{Sexp}

This section is intended for proving two exponential estimates for three state contact processes that will be needed in Section $\ref{S31}$. The method used is based on a renormalization result of Durrett and Schinazi $\cite{DS}$ that is an extension of the well-known work of Bezuidenhout and Grimmett \cite{BG}.

Subsequent developments require understanding of oriented site percolation.
Consider the set of sites, $\mathbb{L} =\{ (y,n) \in \Z^{2} :  n \geq0 \mbox{ and } y+n \mbox{ is even}\}.$
For each site $(y,n) \in \mathbb{L}$ we associate an independent Bernoulli random variable $w(y,n)\in \{0,1\}$ with parameter $p>0$; if $w(y,n)=1$ we say that $(y,n)$ is \textit{open}. We write $(x,m)\rightarrow(y,n)$ whenever there exists a sequence of open sites $(x,m) \equiv (y_{0}, m),\dots, (y_{n-m}, n)  \equiv (y,n)$ such that and $|y_{i} - y_{i-1}|=1$ for all $i=1,\dots, n-m$. Define $(A_{n})_{n\geq0}$ with parameter $p$ as $A_{n} = \{y:(0,0) \rightarrow (y,n)\}$. We write $\{A_{n} \textup{ survives}\}$ as an abbreviation for $\{\forall \hspace{0.5mm} n\geq1: A_{n} \not= \emptyset \}$. 




The next proposition is the renormalization result, it is a consequence of Theorem  4.3 in Durrett \cite{D95}, where the comparison assumptions there hold due to Proposition 4.8 of Durrett and Schinazi $\cite{DS}$. For stating it, given constants $L,T$, we define the set of configurations $Z_{y} = \{ \zeta :  |\mathcal{I}(\zeta) \cap [-L+2Ly,L+2Ly] | \geq L^{0.6} \}$, for all integers $y$.


\begin{prop}\label{couplDS}
Let $\eta$ be any configuration such that $\eta \in Z_{0}$, consider $\zeta_{t}^{\eta}$ with parameters $(\lambda, \mu)$ such that $\mu >\mu_{c}$ and $\lambda>0$. For all $p<1$ there exist constants $L,T$ such that $\zeta_{t}^{\eta}$ can be coupled to $A_{n}$ with parameter $p$ so that, 
\[
y \in A_{n} \hspace{1mm} \Rightarrow \hspace{1mm} \zeta_{nT}^{\eta} \in Z_{y}
\]
$(y,n) \in \mathbb{L}$. In particular the process survives. 
\end{prop}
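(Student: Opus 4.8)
The plan is to obtain Proposition~\ref{couplDS} as an application of the general comparison theorem of Durrett, Theorem~4.3 in \cite{D95}, which turns a single quantitative ``block estimate'' into a coupling with supercritical oriented site percolation of density arbitrarily close to $1$. So fix $p<1$. It suffices to exhibit constants $L$ and $T$ for which the block event $G(\eta):=\{\zeta_{T}^{\eta}\in Z_{-1}\}\cap\{\zeta_{T}^{\eta}\in Z_{1}\}$ satisfies $\inf_{\eta\in Z_{0}}\pr(G(\eta))\geq p'$, where $p'=p'(p)<1$ is the threshold required by Theorem~4.3 of \cite{D95}, and, moreover, for which $G(\eta)$ can be arranged to be measurable with respect to the graphical construction inside a deterministic finite space-time box around $[-L,L]\times\{0\}$. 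Granting this, Theorem~4.3 of \cite{D95} supplies the coupling with $A_{n}$ of parameter $p$ such that $y\in A_{n}\Rightarrow\zeta_{nT}^{\eta}\in Z_{y}$ for every $(y,n)\in\L$; the final assertion is then immediate, since supercritical oriented percolation survives with positive probability and $A_{n}\neq\emptyset$ forces $\I(\zeta_{nT}^{\eta})\neq\emptyset$.

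The substantive input is the block estimate itself. Since $\mu>\mu_{c}$, the contact process with parameter $\mu$ is supercritical, and the Bezuidenhout--Grimmett restart construction \cite{BG} gives, in the quantitative form needed for a comparison theorem, that a $\mu$-contact process started from at least $L^{0.6}$ infected sites in $[-L,L]$ has, with probability tending to $1$ as $L\to\infty$, at least $L^{0.6}$ infected sites in each of $[-3L,-L]$ and $[L,3L]$ at time $T=\gamma L$ for a suitable $\gamma$. To transfer this to the three state process $\zeta^{\eta}$ one uses the structural feature of the graphical construction that a site, once it has been infected, thereafter obeys exactly the $\mu$-contact-process rules (transitions $0\to1$ use arrows of either type); after the $\geq L^{0.6}$ sites infected in $\eta$ have --- with high probability, as they recover at rate $1$ and reinfect at rate $\geq\lambda>0$ --- generated a patch of previously infected sites, the dynamics inside the growing patch is exactly that of the $\mu$-contact process. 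This is precisely the content of Proposition~4.8 of \cite{DS}, which I would quote rather than reprove; note it requires only $\lambda>0$ and $\mu>\mu_{c}$, not $\mu\geq\lambda$. The localization of $G(\eta)$ to a deterministic box is the usual finite-speed-of-propagation argument: replace $\zeta^{\eta}$ by the modified process that ignores all arrows leaving some box $[-KL,KL]\times[0,T]$, which differs from $\zeta^{\eta}$ on the relevant region only on an event of probability $\to 0$, and absorb this discrepancy into the loss budget for $p'$.

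The hard analytic work --- the restart construction of \cite{BG} and its adaptation to the three state process --- is thus available off the shelf, in \cite{BG} and \cite{DS}, and is already packaged for comparison purposes in Theorem~4.3 of \cite{D95}. The real task, and the only place I expect to spend effort, is bookkeeping: verifying that the definition of $Z_{y}$, the block length $L$, the exponent $0.6$ (any power of $L$ growing slower than $L$ works; the value is immaterial), the time scale $T$, and the notion of an open site used here line up exactly with the hypotheses of Theorem~4.3 of \cite{D95} and with the statement of Proposition~4.8 of \cite{DS}. Beyond this matching I anticipate no genuine obstacle, and I would present the argument explicitly as the cited-theorem deduction that it is.
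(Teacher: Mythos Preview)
Your proposal is correct and matches the paper's approach exactly: the paper states Proposition~\ref{couplDS} without proof, noting only that it ``is a consequence of Theorem 4.3 in Durrett \cite{D95}, where the comparison assumptions there hold due to Proposition 4.8 of Durrett and Schinazi \cite{DS}.'' You have supplied more explanatory detail about the block estimate and the finite-range truncation than the paper does, but the citations and the logical structure are identical.
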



The first of the exponential estimates that we need for Section $\ref{S31}$ is the following.

\begin{prop}\label{expbounds3scp}
Consider $\zeta_{t}^{O}$ with parameters $(\lambda, \mu)$. Let also $I_{t} = \mathcal{I}(\zeta_{t}^{O})$, $r_{t} = \sup I_{t}$ and $R= \sup_{t\geq0} r_{t}$, further let $\rho = \inf\{t: I_{t} = \emptyset\}$. If $(\lambda, \mu)$ are such that $\mu >\mu_{c}$ and $\mu\geq\lambda>0$ then there exist constants $C$ and $\gamma>0$ such that
\begin{equation}\label{eqexpbounds3scp}
\pr(R  \geq n, \mbox{ }\rho<\infty) \leq Ce^{-\gamma n}, 
\end{equation}
for all $n\geq1$.
\end{prop}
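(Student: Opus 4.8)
The plan is to deduce the estimate from the renormalization Proposition~\ref{couplDS} by a block/contour argument, exploiting that the event $\{R\geq n,\ \rho<\infty\}$ forces the process to travel a long distance to the right and then die out everywhere. First I would fix $p$ close to $1$ (how close to be determined below) and let $L,T$ be the constants supplied by Proposition~\ref{couplDS}. The event $\{\rho<\infty\}$ means the three state contact process dies out; by the comparison with oriented percolation, the block process $A_n$ started appropriately must also die out, and $\{R\geq n\}$ forces the rightmost infected to reach spatial level of order $n$, i.e.\ forces a percolation path reaching column $\sim n/(2L)$ before extinction. So $\{R\geq n,\ \rho<\infty\}$ is contained, up to the initial transient needed to build a configuration in some $Z_y$, in the event that an oriented site percolation cluster reaches horizontal distance $\sim n/(2L)$ and yet is finite.

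The key steps, in order, are: (1) Use Lemma~\ref{piprendcoup1} (or directly the graphical construction) to reduce to studying the one-sided process and its rightmost point $r_t$, so that $R=\sup_t r_t$ on survival to the right is controlled by the block process. (2) Invoke Proposition~\ref{couplDS}: couple $\zeta_t^{O}$ (after waiting a deterministic-in-distribution time to enter $Z_0$, or rather noting that from the standard configuration the process either dies quickly or with the associated probability generates a block in $Z_0$) to $A_n$ with parameter $p$, so that $y\in A_n \Rightarrow \zeta_{nT}^{O}\in Z_y$; in particular $r_{nT}\geq 2Ly - L$ whenever $y\in A_n$, and $\mathcal I(\zeta_{nT}^O)\ne\emptyset$ whenever $A_n\ne\emptyset$. (3) On $\{R\geq n,\ \rho<\infty\}$ the coupled percolation process is finite (since $\rho<\infty$ forces extinction of every $Z_y$-generating block eventually) but $\max\{y: y\in A_m \text{ for some }m\}\geq (n-L)/(2L)$. (4) Apply the standard exponential estimate for subcritical-from-the-point-of-view-of-a-finite-cluster oriented site percolation: for $p$ sufficiently close to $1$ there is $\gamma'>0$ with $\pr(A_n \text{ finite, but reaches distance } k) \leq C' e^{-\gamma' k}$. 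This is the exponential decay of the radius of a finite open oriented percolation cluster, which holds for all $p$ above the oriented percolation critical value and in particular for $p$ near $1$; taking $k\sim n/(2L)$ gives $Ce^{-\gamma n}$. (5) Finally, sum over the (bounded) initial-transient contribution: either the process fails to ever enter any $Z_y$ — but then $R$ is stochastically dominated by the range of a short-lived process and $\pr(R\geq n)$ already decays exponentially by a crude direct estimate on the graphical construction (the rightmost infected moves at most at the rate of a Poisson process of rate $\lambda$ plus $\mu-\lambda$, so $\pr(R\geq n, \rho<\infty)$ with small surviving time has exponential tails) — or it does, and step (4) applies.

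The main obstacle I anticipate is step~(3)/(5): cleanly handling the ``initial transient,'' i.e.\ making rigorous the dichotomy between ``the process dies before ever producing a block configuration in some $Z_y$'' (handled by a crude large-deviation bound on the displacement of $r_t$ over a short time using only Poisson arrow counts) and ``the process does produce such a block, after which the percolation comparison takes over and the finite-cluster radius estimate applies.'' One must also be careful that the block comparison is with respect to the \emph{dual} or forward percolation in the correct orientation so that $\{\rho<\infty\}$ genuinely forces finiteness of the relevant cluster; since Proposition~\ref{couplDS} gives $y\in A_n\Rightarrow \zeta_{nT}^O\in Z_y$ and ``the process survives'' whenever $A_n$ survives, the contrapositive ($\rho<\infty$) forces $A_n$ to die out, which is what is needed. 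Everything else — the exponential decay of the finite-cluster radius in supercritical oriented site percolation, and the trivial Poisson displacement bound — is standard and I would cite it (e.g.\ Durrett~\cite{D95}, Durrett~\cite{D88}) rather than reprove it.
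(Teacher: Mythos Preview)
Your proposal has a genuine gap in step~(3). The coupling of Proposition~\ref{couplDS} is one-directional: $y\in A_{n}\Rightarrow \zeta_{nT}\in Z_{y}$. This lets you conclude that $\{\rho<\infty\}$ forces the block process $A_{n}$ to die out, which you correctly note. But it does \emph{not} let you conclude that $\{R\geq n\}$ forces $\max\{y: y\in A_{m}\text{ for some }m\}\geq (n-L)/(2L)$. The block percolation is a stochastic \emph{lower} bound on the infected set; the three state process can perfectly well push its rightmost infected to position $n$ while the coupled $A_{n}$ stays near the origin (or even dies immediately). So your containment ``$\{R\geq n,\rho<\infty\}\subseteq\{$finite percolation cluster of radius $\gtrsim n/(2L)\}$'' is unjustified, and the finite-cluster-radius estimate for oriented percolation is not applicable in the direction you want. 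Your dichotomy in step~(5) does not rescue this: even once the process enters some $Z_{y_{0}}$ and the comparison begins, the one-sidedness of the coupling persists.

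The paper's route avoids this entirely by decoupling time and space. It first proves the purely temporal estimate
\[
\pr(t<\rho<\infty)\leq Ce^{-\gamma t},
\]
which \emph{is} a correct consequence of the block comparison (via the argument of Theorem~2.30(a) in Liggett~\cite{L99}, using monotonicity in the initial configuration to restart). It then writes
\[
\pr(R>n,\ \rho<\infty)\leq \pr\Big(\tfrac{n}{\lambda}<\rho<\infty\Big)+\pr\Big(\rho<\tfrac{n}{\lambda},\ R>n\Big).
\]
The first term is handled by the temporal estimate. The second term is bounded by $\pr\big(\sup_{t\leq n/\lambda} r_{t}>n\big)$, and since $r_{t}$ can only increase at the event times of the rate-$\lambda$ Poisson arrow processes to the right, this is a straightforward Poisson large-deviation bound. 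The point is that the block construction is used only through the time variable, where the one-way domination is enough; the spatial control then comes for free from the trivial Poisson speed bound you yourself mention in step~(5).
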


\begin{proof}
Consider the graphical construction for  $(\lambda, \mu)$ as in the statement. Recall the component-wise partial order on the space of configurations, the property of monotonicity in the initial configuration that were introduced in section $\ref{coupling}$ and,  the configurations $\eta_{k}$ as in Definition \ref{defetak}. By Proposition $\ref{couplDS}$, emulating the proof of Theorem 2.30 (a) of Liggett \cite{L99}, we have that 
\begin{equation}\label{rhoconfin}
\mbox{ } \pr( t < \rho < \infty) \leq Ce^{-\gamma t}, 
\end{equation}
for all $t\geq0$; to see that the argument in \cite{L99} applies in this context note that, by monotonicity in the initial configuration, for any time $s>0$ and any $x\in I_{s}$, considering the process $\zeta_{t}^{ [\eta_{x}, s]}$ we have that $\zeta_{t}^{O} \geq  \zeta_{t}^{[\eta_{x}, s]}$ for all $t\geq s$, hence, the proof we referred to applies for $\delta$ there taken to be $\delta = \pr(\zeta_{1}^{O} \in Z_{0}) >0$.  

For proving $(\ref{eqexpbounds3scp})$, by set theory we have that for all $n \geq 1$,
\begin{equation*}\label{RT}
\pr\left(R  > n, \rho < \infty\right) \leq \pr\left( \frac{n}{\lambda} < \rho< \infty\right) + \pr\left(\rho < \frac{n}{\lambda}, \mbox{ } R  > n\right)
\end{equation*}
the first term on the right hand side decays exponentially in $n$ due to $(\ref{rhoconfin})$, thus, it remains to prove that the probability of the event $\{\sup_{t\leq \frac{n}{\lambda}} r_{t} >n\}$ decays exponentially in $n$, which however is immediate because $\sup_{t \in (0,u]} r_{t}$ is bounded above in distribution by the number of events by time $u$ in a Poisson process at rate $\lambda$ and standard large deviations results for Poisson processes. 
\end{proof}

The other exponential estimate we will need in Section $\ref{S31}$ is the following.

\begin{prop}\label{shadldev2}
Let  $\bar{\eta}$ be such that $\bar{\eta}(x) =1$ for all $x \leq 0$ while $\bar{\eta}(x) =-1$ otherwise. Consider $\zeta^{\bar{\eta}}_{t}$ with parameters $(\lambda, \mu)$ and let $\displaystyle{ \bar{r}_{t} =\sup \mathcal{I}(\zeta^{\bar{\eta}}_{t})}$. If $(\lambda, \mu)$ are such that $\mu >\mu_{c}$ and $\mu\geq\lambda>0$ then there exist strictly positive and finite constants $a,\gamma$ and $C$ such that
\[
\pr\left(\bar{r}_{t}  < at\right) \leq Ce^{-\gamma t},
\]
for all $t\geq0$.
\end{prop}

\begin{proof} 
The next elementary result for independent site percolation as well as the subsequent easy geometrical lemma are used in the proof of Proposition $\ref{shadldev2}$, their proofs are given below for completeness. 

\begin{lem}\label{indperc}
Consider $(A_{n})$ with parameter $p$ and let $R_{n} = \sup A_{n}, n\geq0$.
For $p$ sufficiently close to 1 there are strictly positive and finite constants $a,\gamma$ and $C$ such that
\begin{equation*}\label{Rnspproc}
\pr( R_{n} < an, \mbox{ } A_{n} \textup{ survives}) \leq Ce^{- \gamma n}, 
\end{equation*}
for all $n\geq1$. 
\end{lem}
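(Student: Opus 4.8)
This is a standard fact about the right edge of oriented percolation; I describe how I would prove it. The idea is to replace the conditioning on $\{A_{n}\text{ survives}\}$ by a comparison with oriented percolation started from an entire half-line --- a process that never dies out --- and then to invoke the law of large numbers and the exponential lower-tail estimate for the right edge of that process.

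\emph{Step 1: removing the survival conditioning.} Let $\hat A_{n}=\{y:(x,0)\rightarrow(y,n)\text{ for some }x\leq 0\}$ be the set of wet sites at level $n$ of oriented percolation with parameter $p$ run from the half-line $\{(x,0):x\leq 0\}$, and put $\hat R_{n}=\sup\hat A_{n}$. Since $(0,0)$ belongs to the starting set we have $A_{n}\subseteq\hat A_{n}$, hence $R_{n}\leq\hat R_{n}$; conversely $R_{n}=\hat R_{n}$ on $\{A_{n}\neq\emptyset\}$. To see the latter, suppose $\hat R_{n}>R_{n}$ and let $\pi$ be an open oriented path from some $(x,0)$ with $x\leq 0$ to $(\hat R_{n},n)$ and $\gamma$ an open oriented path from $(0,0)$ to some site of level $n$: the difference of their horizontal coordinates is even, starts $\leq 0$, ends $>0$, and changes by $0$ or $\pm 2$ from level to level, so the two paths share a site at some intermediate level, and splicing $\gamma$ with the tail of $\pi$ shows that $(0,0)\rightarrow(\hat R_{n},n)$ --- contradicting $\hat R_{n}>R_{n}$. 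Since $\hat A_{n}$ is almost surely never empty (it contains, for each $n$, the endpoint of an all-open straight oriented path started sufficiently far to the left on the initial half-line), no survival conditioning remains; as $\{A_{n}\text{ survives}\}\subseteq\{A_{n}\neq\emptyset\}$, we obtain
\[
\pr\!\left(R_{n}<an,\ A_{n}\text{ survives}\right)\ \leq\ \pr\!\left(\hat R_{n}<an\right),\qquad n\geq 1 .
\]

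\emph{Step 2: edge speed and its lower tail.} It thus suffices to exhibit $a,\gamma,C\in(0,\infty)$ with $\pr(\hat R_{n}<an)\leq Ce^{-\gamma n}$. This is the standard shape result for the edge of planar oriented percolation: whenever $(A_{n})$ with parameter $p$ survives with positive probability --- in particular for $p$ close to $1$ --- one has $\hat R_{n}/n\to\alpha(p)$ almost surely with $\alpha(p)\in(0,1)$, and, for every $a<\alpha(p)$, an exponentially small bound on $\{\hat R_{n}<an\}$. The positivity of $\alpha(p)$ and the exponential lower tail are due to Durrett; they rest on the fact that the edge regenerates --- there is an almost surely infinite increasing sequence of ``break'' levels at which $\hat R_{n}$ restarts, which decomposes $\hat R_{n}$ into a sum of i.i.d.\ increments of strictly positive mean and exponentially decaying tails, after which the estimate is Cram\'er's theorem. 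I would cite Durrett \cite{D95}, \cite{D88}; this is precisely the oriented-percolation counterpart of the break-point analysis carried out for the three state contact process in Section~\ref{S31}. (For $p$ sufficiently close to $1$ one may instead obtain the regeneration structure and the positivity of the drift by an elementary block renormalization, since $1-p$ small forces a majority of blocks to advance the edge.)

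\emph{Conclusion and main difficulty.} Choosing $p$ close enough to $1$ that $\alpha(p)>0$ and any $a\in(0,\alpha(p))$, the two steps combine to give $\pr(R_{n}<an,\ A_{n}\text{ survives})\leq\pr(\hat R_{n}<an)\leq Ce^{-\gamma n}$ for all $n\geq 1$. The only substantial ingredient is Step 2 --- the strict positivity of the edge speed together with the exponential lower-tail estimate --- which relies on the regeneration (break-point) structure of the edge; the half-line comparison, the planar crossing argument and the final Chernoff bound are routine.
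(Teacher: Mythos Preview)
Your proposal is correct and follows essentially the same route as the paper: pass to the half-line process via the identity $R_n=\hat R_n$ on $\{A_n\text{ survives}\}$, then invoke a known exponential lower-tail estimate for its right edge. The only difference is in how Step~2 is cashed out: you appeal directly to the regeneration/break-point structure of the edge of oriented site percolation, whereas the paper first couples site percolation at level $p=\tilde p(2-\tilde p)$ with bond percolation at level $\tilde p$ so that $B'_n\subset A'_n$ (Liggett \cite{L99}, p.~13), and then cites the bond-percolation large-deviations estimate in Durrett \cite{D84}, Section~11, (1). Both routes are standard; the paper's buys a precise literature pointer, yours is slightly more self-contained.
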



\begin{lem}\label{geomRR+}
Let $b,c$ be strictly positive constants such that $c<b$. For any $a < c$ we can choose sufficiently small $\phi \in (0,1)$, that does not depend on $t \in \R_{+}$, such that for all $x \in [-b\phi t,b\phi t]$, 
\begin{equation}\label{eq:geom}
[x - c (1-\phi)t, x + c (1-\phi)t] \supseteq [-at,at].
\end{equation}
\end{lem}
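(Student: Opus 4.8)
The plan is to prove the containment $(\ref{eq:geom})$ directly by reducing it to a pair of scalar inequalities on the endpoints and then choosing $\phi$ small enough to satisfy both uniformly in $x$ and $t$. First I would observe that the interval inclusion $[x-c(1-\phi)t,\,x+c(1-\phi)t]\supseteq[-at,at]$ is equivalent to the two conditions $x - c(1-\phi)t \le -at$ and $x + c(1-\phi)t \ge at$, i.e.\ $x \le c(1-\phi)t - at$ and $x \ge -c(1-\phi)t + at$. Since $x$ ranges over $[-b\phi t, b\phi t]$, it suffices to have $b\phi t \le (c(1-\phi)-a)t$, because then both $x \le b\phi t \le (c(1-\phi)-a)t$ and $x \ge -b\phi t \ge -(c(1-\phi)-a)t$ hold.

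Next I would handle the factor of $t$. For $t=0$ both sides of $(\ref{eq:geom})$ degenerate to $\{0\}$ and the inclusion is trivial; for $t>0$ we may divide through by $t$, so the requirement becomes the $t$-free inequality $b\phi \le c(1-\phi) - a$, equivalently $\phi(b+c) \le c-a$, i.e.\ $\phi \le (c-a)/(b+c)$. Since $a<c$ and $b,c>0$, the quantity $(c-a)/(b+c)$ is strictly positive, so any choice of $\phi$ in the nonempty interval $\bigl(0,\,(c-a)/(b+c)\bigr]\cap(0,1)$ works, and this choice manifestly does not depend on $t$. That completes the argument.

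There is essentially no obstacle here: the statement is a routine convexity/interval-arithmetic fact, and the only mild care needed is to separate the $t=0$ case (or simply note both intervals scale linearly in $t$ so it is enough to verify at $t=1$) and to record explicitly that the admissible $\phi$ is bounded away from $0$ by a constant depending only on $a,b,c$, which is exactly what the later application (in the proof of Proposition $\ref{shadldev2}$) will need. I would present the proof in three or four lines along the lines above, possibly packaging the endpoint reduction and the division by $t$ into a single display such as
\begin{equation*}
[x-c(1-\phi)t,\,x+c(1-\phi)t]\supseteq[-at,at]\quad\Longleftarrow\quad b\phi \le c(1-\phi)-a,
\end{equation*}
valid for every $x\in[-b\phi t,b\phi t]$ and every $t\ge 0$, and then solving the right-hand inequality for $\phi$.
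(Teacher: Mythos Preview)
Your proof is correct and follows essentially the same approach as the paper: reduce to the worst-case endpoint $x=b\phi t$ (by symmetry), obtain the $t$-free inequality $b\phi \le c(1-\phi)-a$, and solve to get $\phi < (c-a)/(b+c)$. Your version is slightly more explicit about the symmetry and the degenerate $t=0$ case, but the content is identical.
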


Consider the graphical construction for  $(\lambda, \mu)$ as in the statement. Let $p$ be sufficiently close to $1$ so that Lemma $\ref{indperc}$ is satisfied. Recall the configurations $\eta_{x}$ as in Definition \ref{defetak}. By the proof of Theorem 2.30 (a) of Liggett \cite{L99}---which applies for the reasons explained in the first paragraph of the proof of Proposition $\ref{expbounds3scp}$---, we have that total time $\sigma$ until we get a percolation process $A_{n}$ with parameter $p$ that is coupled to $\zeta_{t}^{[\eta_{\bar{r}_{\sigma}},\sigma]}$ as explained in Proposition $\ref{couplDS}$ (for $\bar{r}_{\sigma} \times (\sigma +1)$ being thought of as the origin) and is conditioned on $\{A_{n} \textup{ survives}\}$, is exponentially bounded. From this, because $\bar{r}_{t}$ is bounded above in distribution by a Poisson process,  we have that there exists a constant $\tilde\lambda$ such that the event 
$\left\{\bar{r}_{\sigma} \times (\sigma +1) \in  [ -\tilde{\lambda}t d, \tilde{\lambda}td] \times (0, td]\right\}$, for all $d \in (0,1)$, occurs outside some exponentially small probability in $t$. Finally on this event, by Lemma $\ref{indperc}$ and the coupling in Lemma $\ref{piprendcoup1}$, we have that there exists an $\tilde{a}>0$ such that $\bar{r}_{t} \geq \tilde{a}t - \bar{r}_{\sigma}$, again outside some exponentially small probability in $t$, choosing $\tilde{\lambda}=b$  and $\tilde{a}=c$ in Lemma $\ref{geomRR+}$ completes the proof.

\end{proof}

\begin{proof}[Proof of Lemma \ref{indperc}]
Define $A'_{n} = \{y: (x,0) \rightarrow (y,n) \mbox{ for some } x \leq0\}$ and let $R'_{n}= \sup A'_{n}$, $n\geq1$.
Because $R_{n} = R'_{n}$ on $\{A_{n} \textup{ survives}\}$, it is sufficient to prove that $p$ can be chosen sufficiently close to 1 such that, for some $a>0$, the probability of the event $R'_{n} < an$ decays exponentially in $n \geq 0$. Letting $B_{n}'$ be independent oriented bond percolation on $\mathbb{L}$ with supercritical parameter $\tilde{p}<1$ started from $\{(x,0) \in \mathbb{L} : x\leq0 \}$, the result follows from the corresponding large deviations result for $B_{n}'$ (see Durrett \cite{D84}, (1) in section 11), because for $p= \tilde{p}(2-\tilde{p})$ we have that $B_{n}'$ can be coupled to $A_{n}'$ such that $B_{n}' \subset A_{n}'$ holds, see Liggett \cite{L99}, p.13.
\end{proof}

\begin{proof}[Proof of Lemma \ref{geomRR+}]
Note that it is sufficient to consider $x = b \phi t$; then, simply choose $\phi$ such that $bt\phi -c(1-\phi)t< - at$, $\mbox{i.e.}$ for 
$\displaystyle{ \phi < \frac{c-a}{c+b}}$, $\phi>0$, equation $(\ref{eq:geom})$ holds. 
\end{proof}


\section{Break points}\label{S31} 

In this section we will prove Theorem \ref{THEprop} stated below; based solely on this theorem, we prove Theorem \ref{THEthm1} in Section \ref{S32}.

\begin{thm}\label{THEprop}
Consider $\zeta_{t}^{O}$ with parameters $(\lambda, \mu)$ and let $r_{t} = \sup\mathcal{I}(\zeta_{t}^{O})$. Suppose $(\lambda, \mu)$ such that $\mu >\mu_{c}$ and $\mu\geq\lambda>0$. On $\{\zeta^{O}_{t} \mbox{\textup{ survives}}\}$ there exist random (but not stopping) times $\tilde{\tau}_{0}:=0 < \tilde{\tau}_{1} < \tilde{\tau}_{2} < \dots$ such that $(r_{\tilde{\tau}_{n}} -r_{\tilde{\tau}_{n-1}}, \tilde{\tau}_{n}- \tilde{\tau}_{n-1})_{n \geq 1}$ are i.i.d.\ random vectors, where also $r_{\tilde{\tau}_{1}} \geq 1$ and $\displaystyle{r_{\tilde{\tau}_{n}} = \sup_{t\leq \tilde{\tau}_{n}} r_{t}}$. Furthermore, letting $M_{n}=r_{\tilde{\tau}_{n}}-\inf_{t \in[ \tilde{\tau}_{n}, \tilde{\tau}_{n+1})} r_{t}$, $n \geq 0$, we  have that $(M_{n})_{n\geq0}$ are i.i.d.\ random variables, where also $M_{n} \geq0$. Finally,  $r_{\tilde{\tau_{1}}}, \tilde{\tau}_{1},M_{0}$ are exponentially bounded. 
\end{thm}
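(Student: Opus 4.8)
The plan is to adapt Kuczek's construction of regeneration (``break'') points for the rightmost particle of the supercritical contact process, see \cite{K}, to the three state setting, using the exponential estimates of Section \ref{Sexp} as the key inputs. The basic idea is as follows. Call a space-time point $x \times s$ a \emph{break point} for $\zeta^O_t$ if (a) $r_s = x$ and $r_{s-} = x-1$ (i.e.\ $s$ is the time of a jump of $r_t$ to a new maximum, so $x = \sup_{t \le s} r_t$), (b) the process $\zeta^{[\eta_x, s]}_t$ started from the single site $x$ infected at time $s$ survives, and (c) the rightmost infected of $\zeta^O$ after time $s$ coincides forever with that of $\zeta^{[\eta_x,s]}$. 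By Lemma \ref{Sk} we always have $\zeta^O_t \ge \zeta^{[\eta_x,s]}_t$ for $t \ge s$ when $x \le R$, and by (the argument of) Lemma \ref{piprendcoup1} the rightmost paths agree as long as $\zeta^{[\eta_x,s]}$ survives; the role of (b)--(c) is exactly to guarantee that from a break point onward the evolution of $r_t$ depends only on the graphical construction in the wedge to the right of and above $x \times s$, hence is independent of everything before and replicates in distribution.

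The construction of the sequence $\tilde\tau_n$ is then the standard iteration: set $\tilde\tau_0 = 0$; having defined $\tilde\tau_{n-1}$ with $r_{\tilde\tau_{n-1}} = \sup_{t \le \tilde\tau_{n-1}} r_t =: k_{n-1}$, look at the successive times $\tau_{k}$, $k > k_{n-1}$, at which $r_t$ reaches a new record value $k$, and at each such time test whether $\zeta^{[\eta_k, \tau_k]}_t$ survives; let $\tilde\tau_n$ be the first such $\tau_k$ for which it does (this is the ``first success'' in a sequence of trials). One must check this is a.s.\ well-defined on $\{\zeta^O_t \text{ survives}\}$: on survival $R = \infty$ by Proposition \ref{expbounds3scp} (the event $\{R \ge n, \rho < \infty\}$ is summable, so $\{\rho = \infty\} \subseteq \{R = \infty\}$ a.s.), so infinitely many records occur, and each trial has a probability of success bounded below by a constant $\delta > 0$ (essentially $\pr(\zeta^{[\eta_0,0]}_t \text{ survives})$, via the comparison in Proposition \ref{couplDS} and monotonicity), uniformly in the past, so a success occurs a.s.\ after finitely many trials. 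The i.i.d.\ claim for $(r_{\tilde\tau_n} - r_{\tilde\tau_{n-1}}, \tilde\tau_n - \tilde\tau_{n-1})$ and for $M_n = r_{\tilde\tau_n} - \inf_{t \in [\tilde\tau_n, \tilde\tau_{n+1})} r_t$ then follows from the strong Markov property applied at the record times together with the restart property at a break point: conditionally on $\tilde\tau_n$ being a break point at level $k_n$, the process $(r_{\tilde\tau_n + u} - k_n)_{u \ge 0}$ is a copy of $(r_u)_{u \ge 0}$ run from a break point at the origin, independent of the pre-$\tilde\tau_n$ configuration, and $\tilde\tau_{n+1} - \tilde\tau_n$, $M_n$ are functionals of this copy; the identity $r_{\tilde\tau_n} = \sup_{t \le \tilde\tau_n} r_t$ and $M_n \ge 0$, $r_{\tilde\tau_1} \ge 1$ are built into the definition.

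The remaining, and I expect most delicate, point is the exponential tail bound on $r_{\tilde\tau_1}$, $\tilde\tau_1$ and $M_0$. For $r_{\tilde\tau_1}$: the number $K$ of records needed until the first successful trial is stochastically dominated by a geometric random variable (each trial succeeding with probability $\ge \delta$), so it has exponential tails; but one must still control the spatial growth of $r_t$ between records and, more importantly, argue that records are not too sparse in space — here Proposition \ref{shadldev2} (linear lower bound on the rightmost front, so records accumulate at a linear rate) combined with Proposition \ref{expbounds3scp} (to handle the conditioning on survival / rule out long excursions where $r_t$ stalls) gives that $r_{\tilde\tau_1} \le K$ has exponential tails. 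For $\tilde\tau_1$: between consecutive records the waiting time is controlled because, conditionally on the current ``restart'' surviving it grows linearly (Proposition \ref{shadldev2}) and conditionally on it dying it dies quickly (Proposition \ref{expbounds3scp}, the $\{t < \rho < \infty\}$ estimate \eqref{rhoconfin}), so $\tilde\tau_1$ is dominated by a geometric sum of exponentially-bounded increments, hence exponentially bounded. Finally $M_0 = r_{\tilde\tau_1} - \inf_{t \in [\tilde\tau_1, \tilde\tau_2)} r_t \le \sup_{t} |r_{\tilde\tau_1} - r_{\tilde\tau_1 + t}|$ over the failed restarts occurring in $[\tilde\tau_1, \tilde\tau_2)$, i.e.\ it is bounded by how far left $r_t$ can backtrack during finitely many (geometrically distributed) failed excursions each of exponentially bounded length, so again the estimate \eqref{rhoconfin} for the duration of a dying restart plus the Poisson bound on the speed of $r_t$ yields an exponential tail for $M_0$. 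The main obstacle is bookkeeping the conditioning on $\{\zeta^O_t \text{ survives}\}$ cleanly throughout — in particular making precise that a break point genuinely decouples the future from the past and that the ``trials'' are conditionally independent with uniformly positive success probability — which is exactly where Lemma \ref{Sk}, Lemma \ref{piprendcoup1} and Proposition \ref{couplDS} are used in combination.
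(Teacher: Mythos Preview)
Your approach is the paper's: define break points as the record times $\tau_k$ at which the restarted process $\zeta^{[\eta_k,\tau_k]}$ survives, and then run Kuczek's scheme. The paper packages this as Definition \ref{definbpts} together with Proposition \ref{PROPexpbnd} (exponential tails) and Proposition \ref{PROPiid} (the i.i.d.\ structure), and your sketch of the i.i.d.\ part and of the bounds on $\tilde\tau_1$ and $M_0$ matches what is done there.

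One step needs more than you give it. You assert that ``the number $K$ of records needed until the first successful trial is stochastically dominated by a geometric random variable (each trial succeeding with probability $\ge \delta$)''. This is not immediate: the events $\{\zeta^{[\eta_k,\tau_k]}\text{ survives}\}$ for $k=1,2,\dots$ all live in the graphical construction \emph{after} $\tau_1$ and are correlated, so knowing that each has marginal probability $\delta$ (which is true by the strong Markov property) does not give $\pr(K_1>n)\le(1-\delta)^n$. The paper resolves this with an explicit restart algorithm (Lemma \ref{bptsinf}): pass via Lemma \ref{KeqK'} to the half-line initial configuration $\bar\eta$, set $Y_1=1$, and after each failed trial wait for the restarted process to die at time $\rho_n$ and set $Y_{n+1}=1+\sup_{[T_{Y_n},\rho_n)}\bar r_t$. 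The restart at $Y_{n+1}\times T_{Y_{n+1}}$ then uses only the construction after $T_{Y_{n+1}}>\rho_n$ and strictly to the right of everything touched so far, so \eqref{cbpts1}--\eqref{cbpts2} make these trials genuinely i.i.d.; Lemma \ref{Sk} shows that all intermediate $k$'s fail too, hence $K_1=Y_N$. Thus $K_1$ is a geometric sum of i.i.d.\ copies of $R+1$ conditioned on $\{\rho<\infty\}$, exponentially bounded by Proposition \ref{expbounds3scp}. (One small slip: the formula you display for $M_0$ is actually $M_1$; by the statement $M_0=r_{\tilde\tau_0}-\inf_{[0,\tilde\tau_1)}r_t=-\inf_{[0,\tilde\tau_1)}r_t$.)
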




%


For defining the break points below, consider the graphical construction for $(\lambda, \mu)$ such that $\mu >\mu_{c}$ and $\mu\geq\lambda>0$, consider $\zeta_{t}^{O}$ and define $r_{t} =\sup \mathcal{I}(\zeta_{t}^{O})$, define also the stopping times $\tau_{k} = \inf\{t: r_{t}=k\}$, $k\geq0$. Let also $\eta_{k}$ be as in Definition \ref{defetak}. The break points defined below is the unique strictly increasing, in space and in time, subsequence of the space-time points $k \times \tau_{k}, k\geq1,$ such that  $\zeta_{t}^{[\eta_{k},\tau_{k}]} \textup{ survives}$. The origin $0 \times 0$ is a break point, i.e.\ our subsequence is identified on $\{\zeta_{t}^{O} \textup{ survives}\}$.

\begin{definition}\label{definbpts}
Define $(K_{0}, \tau_{K_{0}})= (0, 0)$. For all $n\geq0$ and $K_{n}<\infty$ we inductively define
\[
K_{n+1} = \inf\{k \geq K_{n}+1:  \zeta_{t}^{[\eta_{k},\tau_{k}]} \mbox{ survives}\}, 
\]  
and $X_{n+1} = K_{n+1}-K_{n}$, additionally we define $\Psi_{n+1} = \tau_{K_{n+1}} - \tau_{K_{n}}$, and also
$\displaystyle{M_{n} =K_{n} -\inf_{\tau_{K_{n}} \leq t < \tau_{K_{n+1}}} r_{t}}$. We refer to the space-time points $K_{n} \times \tau_{K_{n}}$, $n\geq0$, as the \textit{break points}.
\end{definition}



Letting $\tilde{\tau}_{n}:= \tau_{K_{n}}, n\geq0$, in the definition above gives us that for proving Theorem $\ref{THEprop}$ it is sufficient to prove the two propositions following; this section is intended for proving these.

\begin{prop}\label{PROPexpbnd}
$K_{1}$, $\tau_{K_{1}}$ and $M_{0}$ are exponentially bounded.
\end{prop}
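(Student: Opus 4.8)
The plan is to prove that the three quantities $K_1$, $\tau_{K_1}$ and $M_0$ each have exponentially decaying tails, using the two exponential estimates already established (Propositions \ref{expbounds3scp} and \ref{shadldev2}) together with monotonicity and the coupling lemmas of Section \ref{coupling}. The key structural fact behind the definition of break points is that for each $k$ with $\tau_k < \infty$, on the event $\{k \times \tau_k \text{ is a break point}\}$ the process $\zeta_t^{[\eta_k,\tau_k]}$ survives, and on such a survival event Lemma \ref{piprendcoup1} (applied with a shift in space and time) forces $r_t = k + r'_{t-\tau_k}$ where $r'$ is the rightmost infected of a fresh copy started from a single site; moreover Lemma \ref{Sk} guarantees $\zeta_t^{O} \geq \zeta_t^{[\eta_k,\tau_k]}$, so once a break point occurs at level $k$ the origin process never dies and $r$ never again drops below whatever lower excursion follows. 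The crucial probabilistic input is that, since $\{\zeta_t^{[\eta_k,\tau_k]} \text{ survives}\}$ are events of probability bounded below by a fixed constant $\beta_0 = \pr(\zeta_t^{\eta_0} \text{ survives}) > 0$ (by monotonicity $\eta_k$ is the minimal configuration with a $1$ at $k$, and $\zeta^{O}_{\tau_k} \geq \eta_k$), and since these events for successive $k$ are ``almost independent'' in the sense that can be controlled, $K_1$ behaves essentially like a geometric random variable.

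\textbf{Step 1: Exponential tail for $K_1$.} First I would show $\pr(K_1 > n) \leq Ce^{-\gamma n}$. The event $\{K_1 > n\}$ requires that for every $k = 1, \dots, n$ with $\tau_k < \infty$, the process $\zeta_t^{[\eta_k, \tau_k]}$ dies out. The clean way to get independence is to use the graphical construction restricted to disjoint space-time regions, or more simply to argue as follows: condition on $\tau_k < \infty$ (which, on survival, happens for all $k$); the events $D_k := \{\zeta_t^{[\eta_k,\tau_k]} \text{ dies out}\}$ each have conditional probability at most $1 - \beta_0 < 1$ given the past up to $\tau_k$, because $\zeta^{O}_{\tau_k} \geq \eta_k$ and by monotonicity $\zeta_t^{[\eta_k,\tau_k]}$ dominates a fresh standard-initial-configuration process (translated), which survives with probability $\beta_0$, and this fresh process's survival depends only on Poisson marks in $[\tau_k,\infty)$ strictly to the ``future-right.'' A cleaner route: note $\{K_1 > n\} \subseteq \{\zeta_t^{[\eta_k,\tau_k]} \text{ dies out for all } k \le n \text{ with } \tau_k<\infty\} \cup \{\tau_n = \infty\}$; on $\{\zeta_t^{O} \text{ survives}\}$ we have $R = \infty$ so $\tau_n < \infty$ always, and in fact what we want is $\pr(K_1 > n)$ — but on $\{\zeta_t^{O} \text{ dies out}\}$, $K_1$ is simply undefined, so really we want the conditional statement, or equivalently $\pr(K_1 > n,\ \zeta_t^{O} \text{ survives})$. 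I would handle this by a direct renewal/strong-Markov argument: each time we reach a new level $k$, there is conditional probability at least $\beta_0$ of survival from $\eta_k$, and these trials use disjoint portions of the Poisson clocks (restricting the survival test of $\zeta^{[\eta_k,\tau_k]}$ to a cone going up and to the right that can be made to not interfere with the test at level $k+1$ — or, more robustly, simply invoking that the number of failed levels before the first success is stochastically dominated by a geometric variable, which follows from Proposition \ref{couplDS}'s renormalization picture exactly as in the proof of Theorem 2.30(a) of Liggett \cite{L99}).

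\textbf{Step 2: Exponential tail for $\tau_{K_1}$.} Given Step 1, I would write $\tau_{K_1} = \tau_{K_1}$ and bound it via $\pr(\tau_{K_1} > t) \leq \pr(K_1 > m) + \pr(\tau_{K_1} > t,\ K_1 \leq m)$ for a suitable $m = m(t)$ linear in $t$. On $\{K_1 \leq m\}$ we need $r_t$ to reach level $K_1 \leq m$ by time $t$; but $r$ increases by at most the jumps of a rate-$\lambda$ Poisson process to the right, so reaching \emph{any} bounded level is not the issue — rather, we need $\tau_{K_1} < \infty$ and not too large. The right bound comes from Proposition \ref{shadldev2}: the rightmost infected of the all-infected-on-the-left process grows at least linearly, and by Lemma \ref{piprendcoup1}-type comparison $r_t$ agrees with that process's front on the survival event; so on $\{\zeta_t^{O}\text{ survives}\}$, $r_t \geq at$ outside exponentially small probability, hence $\tau_k \leq k/a$ outside exponentially small probability, giving $\pr(\tau_{K_1} > t,\ K_1 \le at) \le \pr(\tau_{\lceil at\rceil} > t) \le Ce^{-\gamma t}$. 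Combining with Step 1 applied at $m = at$ gives the exponential bound for $\tau_{K_1}$.

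\textbf{Step 3: Exponential tail for $M_0$.} Recall $M_0 = K_0 - \inf_{\tau_{K_0} \le t < \tau_{K_1}} r_t = -\inf_{0 \le t < \tau_{K_1}} r_t$, the depth of the largest leftward excursion of $r_t$ before the first genuine break point. Since before time $\tau_{K_1}$ the front $r_t$ has performed at most finitely many steps and $\tau_{K_1}$ is exponentially bounded (Step 2), and since $|\inf_{t \le u} r_t|$ is stochastically dominated by the number of Poisson marks up to time $u$ (the front moves left by at most one per relevant Poisson event), I get $\pr(M_0 > n) \leq \pr(\tau_{K_1} > n/(2\lambda')) + \pr(\text{a rate-}\lambda' \text{ Poisson process exceeds } n \text{ by time } n/(2\lambda'))$ for appropriate $\lambda'$, both exponentially small. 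Here the relevant rate $\lambda'$ bounds the rate at which $l_t$ or $r_t$ can decrease, which is finite (bounded by the recovery rate plus arrow rates acting near the front). The main obstacle, and the step deserving the most care, is Step 1: making the ``geometric number of trials'' argument rigorous requires either a careful restriction of the survival-test events to disjoint regions of the graphical representation so that genuine independence holds, or a careful appeal to the strong Markov property at the stopping times $\tau_k$ combined with the uniform lower bound $\beta_0$ on conditional survival probability — and one must be slightly careful that $\zeta^{[\eta_k,\tau_k]}$ surviving is determined by future Poisson marks only, which it is, so the strong Markov property applies cleanly and $\{K_1 > n\}$ is contained in an intersection of $n$ events each of conditional probability $\le 1-\beta_0$ given the relevant $\sigma$-field, yielding $\pr(K_1 > n) \le (1-\beta_0)^n$.
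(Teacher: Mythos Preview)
Your Steps 2 and 3 are essentially right and match the paper's approach: once $K_1$ is exponentially bounded, you combine it with the linear-growth estimate (Proposition~\ref{shadldev2}) to bound $\tau_{K_1}$, and then use Poisson bounds on the front's displacement over a time interval of length $\tau_{K_1}$ to bound $M_0$.

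The gap is in Step 1, and it is exactly the obstacle you flagged but did not actually overcome. Your final claim, that $\{K_1>n\}=\bigcap_{k=1}^n D_k$ with $D_k=\{\zeta_t^{[\eta_k,\tau_k]}\text{ dies out}\}$ and that $\pr(D_k\mid\mathcal{F}_{\tau_k})\le 1-\beta_0$ yields $\pr(K_1>n)\le(1-\beta_0)^n$, does not follow. The point is that $D_1,\dots,D_{k-1}$ are \emph{not} $\mathcal{F}_{\tau_k}$-measurable: the process launched at level $j<k$ may very well still be alive at time $\tau_k$, so its eventual death is a future event at that moment. Hence you cannot iterate the conditional bound. (In fact the $D_k$ are decreasing events in the graphical representation and so positively correlated by FKG, which is the wrong direction for a product upper bound.)

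The paper's fix is precisely to manufacture independence by \emph{not} testing every level. It works instead with the process $\zeta_t^{\bar\eta}$ started from $(-\infty,0]$ infected and builds a subsequence $Y_1<Y_2<\cdots$ of levels: start a fresh copy at $Y_1=1$; if it dies at time $\rho_1$, set $Y_2=1+\sup_{[T_{Y_1},\rho_1)}\bar r_t$ and start a fresh copy at $Y_2$ at time $T_{Y_2}$; iterate. Because each new trial is launched only \emph{after} the previous one has died, its outcome is genuinely independent of the past by the strong Markov property, and the number of trials $N$ is geometric while the gaps $Y_{n+1}-Y_n$ are i.i.d.\ copies of $R+1$ conditional on death, hence exponentially bounded by Proposition~\ref{expbounds3scp}. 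The step that closes the loop---and that you are missing---is the monotonicity observation (Lemma~\ref{Sk}) that if the process from level $Y_n$ dies out, then so does every process from the intermediate levels $Y_n+1,\dots,Y_{n+1}-1$; this gives $Y_N=\inf\{k\ge1:\zeta_t^{[\eta_k,T_k]}\text{ survives}\}$, which via Lemma~\ref{KeqK'} equals $K_1$ on $\{\zeta_t^O\text{ survives}\}$. That identification is what turns $K_1$ into a geometric sum of i.i.d.\ exponentially bounded summands.
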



\begin{prop}\label{PROPiid}
$(X_{n}, \Psi_{n} ,M_{n-1})_{n \geq 1}$, are independent identically distributed vectors.
\end{prop}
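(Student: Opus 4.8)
The plan is to establish Propositions~\ref{PROPexpbnd} and \ref{PROPiid} together, since both rely on the same structural fact: the break points are defined purely in terms of the graphical construction restricted to a region that "regenerates" in a spatial sense. The key observation is the coupling of Lemma~\ref{Sk}, which gives $\zeta_{t}^{O} \geq \zeta_{t}^{[\eta_{k},\tau_{k}]}$ for all $k \leq R$; combined with Lemma~\ref{piprendcoup1}, on the event $\{\zeta_{t}^{[\eta_{K_n},\tau_{K_n}]}\text{ survives}\}$ we have $r_t = r_t'$ where $r_t' = \sup\mathcal{I}(\zeta_{t}^{[\eta_{K_n},\tau_{K_n}]})$, so the rightmost infected of the full process, from time $\tau_{K_n}$ onward, agrees with that of the process restarted from a single site at $K_n$. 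This is what makes $r_{\tilde\tau_n}=\sup_{t\le\tilde\tau_n}r_t$ and furnishes the renewal structure.

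For Proposition~\ref{PROPexpbnd}, I would argue as follows. First, $K_1$ is the first $k\ge 1$ with $\zeta_t^{[\eta_k,\tau_k]}$ surviving. The idea is that each trial $k \mapsto k+1$ has a chance bounded below of succeeding, uniformly, but the trials are not independent; instead I would use Proposition~\ref{expbounds3scp}: if $\zeta_t^{[\eta_k,\tau_k]}$ dies out, then its rightmost infected has a finite supremum $R^{(k)}$, and by the exponential bound \eqref{eqexpbounds3scp} applied to the restarted process, the overshoot past $k$ before dying is exponentially bounded. Chaining these — the failed attempts at $k, k+1,\dots$ each contribute a geometrically small increment and the process must keep advancing for a new attempt to be made — yields that $K_1$ is exponentially bounded. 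Then $\tau_{K_1}$ is exponentially bounded because, on $\{r_t \le K_1\}$, the time to reach level $K_1$ is controlled: the left edge cannot fall too far (again Proposition~\ref{expbounds3scp} controlling how long excursions that die last) and, more simply, $\tau_{K_1}$ is dominated using Proposition~\ref{shadldev2} — on survival the rightmost infected grows at least linearly outside exponentially small probability, so reaching a level that is itself exponentially bounded happens in exponentially bounded time. Finally $M_0 = K_0 - \inf_{0\le t<\tau_{K_1}} r_t = -\inf_{t<\tau_{K_1}} r_t$ is exponentially bounded: the leftmost dip of $r_t$ before $\tau_{K_1}$ is controlled by the overshoots of the finitely many failed attempts, each exponentially bounded, together with the exponential bound on $K_1$ itself.

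For Proposition~\ref{PROPiid}, the plan is to show that the post-$\tilde\tau_n$ evolution of $r_t$, shifted in space by $K_n$ and in time by $\tau_{K_n}$, has the same law as the original and is independent of the past, conditional on the break-point structure up to $n$. The point is that $\zeta_t^{[\eta_{K_n},\tau_{K_n}]}$ is built from the graphical construction in a forward light-cone-like region that is disjoint from (independent of) the randomness determining $K_0,\dots,K_n$ and $\tau_{K_1},\dots,\tau_{K_n}$: once we condition on the location $K_n\times\tau_{K_n}$ of the $n$-th break point, the Poisson marks strictly to the right of $K_n$ and strictly after $\tau_{K_n}$ are fresh, and the surviving restarted process $\zeta_t^{[\eta_{K_n},\tau_{K_n}]}$ coincides with $\zeta_t^O$ in the relevant region by Lemmas~\ref{Sk} and \ref{piprendcoup1}. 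Hence $(X_{n+1},\Psi_{n+1},M_n)$, which are functionals of exactly this fresh region, have the law of $(K_1,\tau_{K_1},M_0)$ from the standard start and are independent of $\mathcal{F}_{\tau_{K_n}}$-measurable quantities. Formally I would use the strong Markov property at the stopping times $\tau_k$ together with the fact that $\{\zeta_t^{[\eta_k,\tau_k]}\text{ survives}\}$ is an event measurable with respect to the graphical construction after $\tau_k$, and an inductive/iterative application gives the full i.i.d.\ claim.

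The main obstacle I anticipate is the careful handling of the fact that $\tilde\tau_n=\tau_{K_n}$ are \emph{not} stopping times — whether a given $\tau_k$ is a break point depends on the future (survival of the restarted process). The resolution is the standard Kuczek device: work with the ordinary stopping times $\tau_k$, at which the strong Markov property genuinely applies, decompose $\{\zeta_t^{[\eta_k,\tau_k]}\text{ survives}\}$ into a conditionally independent future event, and then verify that conditioning on survival does not disturb the i.i.d.\ structure of the successive increments, because each increment's defining region is conditionally independent of the earlier ones given the break-point locations. Keeping the bookkeeping of "which Poisson marks are used where" precise — so that independence of the successive fresh regions is genuinely clean — is the technical heart, and it leans essentially on the nearest-neighbour assumption and on $\mu\ge\lambda$ (so that Lemmas~\ref{piprendcoup1} and \ref{Sk} hold), exactly as flagged in the remark preceding Lemma~\ref{piprendcoup1}.
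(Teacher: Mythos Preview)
Your plan is the paper's in spirit---Kuczek-style regeneration, with the non-stopping-time issue handled by conditioning on the values $(K_n,\tau_{K_n})=(z_n,w_n)$ so that the honest stopping time $\tau_{z_n}$ can be used---but there is a conceptual slip in how you locate the independence, and a key decomposition you have not articulated.

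You write that ``the Poisson marks strictly to the right of $K_n$ and strictly after $\tau_{K_n}$ are fresh'' and that $(X_{n+1},\Psi_{n+1},M_n)$ are ``functionals of exactly this fresh region''. Neither is correct. The restarted process $\zeta^{[\eta_{K_n},\tau_{K_n}]}$ uses the graphical construction after time $\tau_{K_n}$ at \emph{all} sites, including to the left of $K_n$; indeed $M_n$ is precisely the depth of the leftward dip. More seriously, the conditioning event $\{K_l,\tau_{K_l}\text{ given for }l\le n,\ \zeta^O\text{ survives}\}$ is not independent of the marks after $\tau_{K_n}$: for each $l<n$, the statement ``$K_l$ is a break point'' is the event that $\zeta^{[\eta_{K_l},\tau_{K_l}]}$ survives, and that process runs through the entire future, past $\tau_{K_n}$ and past $K_n$. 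There is no unused space-time box.

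What the paper does instead is the decomposition of Lemma~\ref{Xbdownii}:
\[
\Big\{\textstyle\bigcap_{l=1}^{n}\{(X_l,\Psi_l,M_{l-1})=(x_l,t_l,m_{l-1})\},\ \zeta^O\text{ survives}\Big\}
=\big\{\zeta^{[\eta_{z_n},w_n]}\text{ survives},\ \tau_{z_n}=w_n,\ A\big\}
\]
for some $A\in\mathcal{F}_{w_n}$. The mechanism is that (i) the failed restarts between successive break points all die \emph{before} the next $\tau_{K_l}$, so those death events land in $\mathcal{F}_{w_n}$; and (ii) the survival events for the earlier successful restarts $\zeta^{[\eta_{K_l},\tau_{K_l}]}$, $l<n$, are all \emph{implied} by survival of the last one $\zeta^{[\eta_{z_n},w_n]}$, via the monotone nesting of Lemma~\ref{Sk}. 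So the many future conditions collapse into a single one, and the residue is honestly past-measurable. Once this is in hand, the proof notes that $\{\tau_{z_n}=w_n\}\subset\{\zeta^O_{w_n}(z_n)=1,\ \zeta^O_{w_n}(y)=-1\text{ for }y\ge z_n+1\}$, invokes Lemma~\ref{KeqK'} to identify the next increment computed from $\zeta^O$ with the first increment of the (translated) restart, and uses independence of the construction before and after $w_n$ together with translation invariance. Your invocation of Lemmas~\ref{Sk} and~\ref{piprendcoup1} is the right toolkit, but the step you are missing is exactly how the several future survival conditions for $l\le n$ collapse into the single event $\{\zeta^{[\eta_{z_n},w_n]}\text{ survives}\}$; without that, the ``independent of $\mathcal{F}_{\tau_{K_n}}$'' claim does not follow.
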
 

\begin{definition}
Given a configuration $\zeta$ and an integer $y\geq1$, define the configuration $\zeta-y$ to be $(\zeta-y)(x) = \zeta(y+x)$, for all $x \in \Z$.
\end{definition}

We shall denote by $\mathcal{F}_{t}$ the sigma algebra associated to the ensemble of Poisson processes used for producing the graphical construction up to time $t$.

The setting of the following lemma is important to what follows.

\begin{lem}\label{bptsinf}
Let  $\bar{\eta}$ such that $\bar{\eta}(x) =1$ for all $x \leq 0$ while $\bar{\eta}(x) =-1$ otherwise. Consider $\zeta^{\bar{\eta}}_{t}$ with parameters $(\lambda, \mu)$. Define $\bar{r}_{t} =\sup\mathcal{I}(\zeta^{\bar{\eta}}_{t})$, define also, the stopping times $T_{n} = \inf\{t: \bar{r}_{t} = n\}$, $n\geq0$. Let  $(\lambda,\mu)$ be such that $\mu \geq \lambda>0$ and $\mu > \mu_{c}$ and consider the graphical construction. 

Let $Y_{1}:=1$ and consider $\zeta_{t}^{1}:=\zeta_{t}^{[\eta_{Y_{1}}, T_{1}]}$, we let $\rho_{1} = \inf\{ t\geq T_{1}: \mathcal{I}(\zeta_{t}^{1}) = \emptyset\}$. 
For all $ n\geq 1$, proceed inductively: On the event $\{\rho_{n} < \infty\}$ let
\[
Y_{n+1} = 1+ \sup_{t \in [T_{Y_{n}},\rho_{n})} \bar{r}_{t}, 
\]
and consider $\zeta_{t}^{n+1}:= \zeta_{t}^{[\eta_{Y_{n+1}},T_{Y_{n+1}}]}$, we let $\rho_{n+1} = \inf\{t \geq T_{Y_{n+1}}: \mathcal{I}(\zeta_{t}^{n+1}) = \emptyset\}$; on the event that $\{\rho_{n} = \infty\}$ let $\rho_{l} = \infty$ for all $l > n$. Define the random variable $N = \inf\{n\geq1: \rho_{n}= \infty\}$. We have the following expression,
\begin{equation}\label{Yinf}
Y_{N} = \inf\{k\geq1: \zeta_{t}^{[\eta_{k}, T_{k}]} \mbox{ \textup{survives}}\},
\end{equation}
and also,
\begin{equation}\label{eq:algopiprendcoup}
\bar{r}_{t} = \sup\mathcal{I}(\zeta_{t}^{n}), \mbox{ for all } t \in [T_{Y_{n}},\rho_{n}) \mbox{ and } n\geq1.
\end{equation}

We further have that
\begin{equation}\label{cbpts1}
(\zeta^{1}_{t+ T_{1}}- 1)_{t \geq 0} \mbox{ is independent of }\mathcal{F}_{T_{1}}   \mbox{ and is equal in distribution to } (\zeta_{t}^{O})_{t\geq0},
\end{equation}
and also, 
\begin{eqnarray}\label{cbpts2}
&& \mbox{ conditional on } \{\rho_{n}<\infty, Y_{n+1} = w\}, w \geq 1, (\zeta^{n+1}_{t+ T_{Y_{n+1}}}- w)_{t \geq 0} \nonumber\\
&& \mbox{ is independent of } \mathcal{F}_{T_{Y_{n+1}}} \mbox{ and is equal in distribution to } (\zeta_{t}^{O})_{t\geq0}.
\end{eqnarray} 
\end{lem}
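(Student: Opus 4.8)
The plan is to verify the four assertions in sequence, with the first two being essentially bookkeeping about the inductive construction, and the last two being applications of the strong Markov property combined with the translation invariance of the graphical construction. First I would establish \eqref{eq:algopiprendcoup}: on $[T_{Y_n},\rho_n)$, the process $\zeta_t^n$ is the three state contact process started from the single infected site $Y_n$ at time $T_{Y_n}$, and by the defining property of the $T_k$'s together with the induction hypothesis $Y_n = 1 + \sup_{t<\rho_{n-1}}\bar r_t \geq$ (the location reached by $\bar r$), the configuration $\zeta_{t}^{\bar\eta}$ restricted to sites $\geq l_t$ agrees with $\zeta^n_t$; this is exactly the argument of Lemma \ref{piprendcoup1} (monotonicity in the initial configuration forces the pairs of states at the leftmost occupied site to be among the admissible ones, and $\lambda$-arrows serve both $-1\to 1$ and $0\to 1$), applied now with $\zeta^{\bar\eta}$ in the role of $\zeta^O_t$ and $\zeta^n$ in the role of $\zeta^{\eta'}$. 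Note $\bar r_t$ keeps increasing only while the relevant piece survives, so the union of the intervals $[T_{Y_n},\rho_n)$ covers $[T_1,\infty)$ until the first $n$ with $\rho_n=\infty$; this gives \eqref{Yinf}: the first $k$ with $\zeta_t^{[\eta_k,T_k]}$ surviving is precisely the first $Y_n$ whose associated piece never dies, i.e.\ $Y_N$, because for $k$ strictly between consecutive $Y_n$'s the point $k\times T_k$ lies in the interior of a dying excursion and $\zeta_t^{[\eta_k,T_k]}$ is dominated by (a translate of) that dying piece, hence dies.

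Next, for \eqref{cbpts1}: $T_1$ is a stopping time for $(\mathcal F_t)$, and $\zeta^1_t = \zeta^{[\eta_1,T_1]}_t$ is built from the graphical construction restricted to $[T_1,\infty)$ using only the Poisson marks strictly after $T_1$, shifted in space by $1$. By the strong Markov property of the Poisson ensemble, the marks after time $T_1$, shifted by $-1$ in space and $-T_1$ in time, form an independent copy of the original ensemble; hence $(\zeta^1_{t+T_1}-1)_{t\geq 0}$ is independent of $\mathcal F_{T_1}$ and has the law of the three state contact process started from $\eta_0$ at time $0$, which is $(\zeta^O_t)_{t\geq0}$. Here the crucial point, flagged in the remark preceding the lemma that "$\zeta-y$" shifts, is that $\eta_1$ translated by $-1$ is exactly the standard initial configuration $\eta_0$. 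For \eqref{cbpts2} the same argument applies on the event $\{\rho_n<\infty, Y_{n+1}=w\}$: the key subtlety is that $T_{Y_{n+1}}=T_w$ is a genuine stopping time (since $\{Y_{n+1}=w\}$ is determined by $\mathcal F_{T_w}$ — it depends on $\bar r$ up to the excursion-death times $\rho_j$, all of which, on this event, occur before $T_w$ is reached for the relevant index, or more carefully one conditions step by step so that the event $\{\rho_n<\infty,Y_{n+1}=w\}\in\mathcal F_{T_w}$), and then one invokes the strong Markov property exactly as before, translating by $-w$.

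The main obstacle I expect is the measurability/adaptedness point in \eqref{cbpts2}: one must check carefully that $T_{Y_{n+1}}$ is a stopping time and that the conditioning event $\{\rho_n<\infty,Y_{n+1}=w\}$ is $\mathcal F_{T_{Y_{n+1}}}$-measurable, because $Y_{n+1}$ is defined via a supremum of $\bar r_t$ over a random interval $[T_{Y_n},\rho_n)$ whose right endpoint $\rho_n$ is itself the death time of an auxiliary process — so one needs that $\rho_n$, on $\{\rho_n<\infty\}$, is a stopping time and that by the time $\bar r$ reaches level $w$ all of this information has been revealed. The clean way to handle this is by induction on $n$: assuming $T_{Y_n}$ is a stopping time and $(\zeta^n_{t+T_{Y_n}}-Y_n)$ is a fresh copy of $\zeta^O$ independent of $\mathcal F_{T_{Y_n}}$, the death time $\rho_n$ becomes a stopping time relative to the shifted filtration, $Y_{n+1}$ is then measurable, and $T_{Y_{n+1}}$ is a stopping time; everything else is the strong Markov property for the Poisson marks plus the spatial-translation identity $\eta_k - k = \eta_0$. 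Once \eqref{cbpts1}–\eqref{cbpts2} and \eqref{Yinf}–\eqref{eq:algopiprendcoup} are in hand, no further computation is needed.
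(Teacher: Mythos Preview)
Your proposal is correct and matches the paper's approach: \eqref{eq:algopiprendcoup} via Lemma~\ref{piprendcoup1}, \eqref{Yinf} via the domination in Lemma~\ref{Sk}, and \eqref{cbpts1}--\eqref{cbpts2} via the strong Markov property plus translation invariance (conditioning on $\{Y_{n+1}=w\}$ so that $T_{Y_{n+1}}=T_w$ is an honest hitting-time stopping time, exactly as you outline). The only point the paper makes explicit that you leave implicit is the a.s.\ finiteness of the relevant stopping times, handled there by invoking Proposition~\ref{shadldev2} (giving $T_n<\infty$ for all $n$) and Proposition~\ref{expbounds3scp} (giving $Y_{n+1}<\infty$ on $\{\rho_n<\infty\}$); you should cite these when you write out the strong Markov step.
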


\begin{proof}

Equation $(\ref{Yinf})$ is a consequence of Lemma $\ref{Sk}$, to see this note that this lemma gives that for all $n\geq1$ on $\{\rho_{n}<\infty\}$, $\rho_{n} \geq \inf\{t\geq T_{k}: \mathcal{I}(\zeta_{t}^{[\eta_{k},T_{k}]})= \emptyset\}$ $\mbox{for all } k=Y_{n}+1,\dots,Y_{n+1}-1$. Equation $(\ref{eq:algopiprendcoup})$ is immediate due to Lemma $\ref{piprendcoup1}$.

Note that from Proposition $\ref{shadldev2}$ we have that $T_{n} < \infty \mbox{ for all }  n\geq0$ a.s.. Then, equation $(\ref{cbpts1})$ follows from the Strong Markov Property at time $T_{1}<\infty$ and translation invariance; while $(\ref{cbpts2})$ is also immediate by applying the Strong Markov Property at time $T_{Y_{n+1}}<\infty$, where $T_{Y_{n+1}}<\infty$ because from Proposition $\ref{expbounds3scp}$ we have that, conditional on $\rho_{n}<\infty$,  $Y_{n+1} < \infty$ $\mbox{a.s.}$.
\end{proof}

The connection between the break points and Lemma $\ref{bptsinf}$ comes by the following coupling result that is an immediate consequence of Lemma $\ref{piprendcoup1}$.
 
\begin{lem}\label{KeqK'}
Let $\eta'$ be any configuration such that $\eta'(0)= 1$ and $\eta'(x)= -1$ for all $x\geq1$. Consider   $\zeta_{t}^{\eta'}$ with parameters $(\lambda,\mu)$ and let $r_{t}'= \sup\mathcal{I}(\zeta_{t}^{\eta'})$, let also $\tau'_{k} = \inf\{t\geq 0: r'_{t} = k\}$, $k\geq1$. Define the integers 
\[
K' = \inf\{k\geq 1: \zeta_{t}^{[\eta_{k}, \tau'_{k}]} \mbox{\textup{ survives}}\}, 
\]
and also $M'= \inf_{0 \leq t \leq \tau_{K}'} r_{t}'.$ Consider further $\zeta_{t}^{O}$  with parameters $(\lambda,\mu)$. For $(\lambda,\mu)$ such that $\mu \geq \lambda>0$ and $\mu >\mu_{c}$, if $\zeta_{t}^{O}$ and $\zeta_{t}^{\eta'}$ are coupled by the graphical construction the following property holds,
\begin{equation*}\label{coupK1tauK1}
 (K', \tau'_{K'},M') = (K_{1}, \tau_{K_{1}}, M_{0}), \mbox{ on  } \{\zeta_{t}^{O} \mbox{\textup{ survives}}\},
\end{equation*} 
where $K_{1}, \tau_{K_{1}}, M_{0}$ are as in Definition $\ref{definbpts}$. 
\end{lem}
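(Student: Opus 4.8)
The proof will be a direct consequence of Lemma~\ref{piprendcoup1}, so the plan is to reduce all three coordinatewise equalities to the single identity $r_t=r_t'$ valid for all $t\ge0$ on $\{\zeta_t^{O}\textup{ survives}\}$. The configuration $\eta'$ in the present statement satisfies exactly the hypotheses of Lemma~\ref{piprendcoup1}, which gives, for each fixed $t$, that $r_t=r_t'$ on $\{I_t\neq\emptyset\}$. Since by definition $\{\zeta_t^{O}\textup{ survives}\}\subseteq\{I_t\neq\emptyset\}$ for every $t$, we obtain at once that on $\{\zeta_t^{O}\textup{ survives}\}$ the two paths $t\mapsto r_t$ and $t\mapsto r_t'$ coincide simultaneously for all $t\ge0$.

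Granting this, I would argue as follows. First, because the paths $r_\cdot$ and $r_\cdot'$ agree on $\{\zeta_t^{O}\textup{ survives}\}$, the hitting times satisfy $\tau_k=\tau_k'$ there for every $k\ge1$ (both are the first time the common path attains level $k$, equal to $+\infty$ if it never does). Hence, for each $k\ge1$, the processes $\zeta_t^{[\eta_k,\tau_k]}$ and $\zeta_t^{[\eta_k,\tau_k']}$ are started from the same configuration $\eta_k$ at the same time and are driven by the same Poisson data, so they are literally the same process, and in particular $\{\zeta_t^{[\eta_k,\tau_k]}\textup{ survives}\}=\{\zeta_t^{[\eta_k,\tau_k']}\textup{ survives}\}$. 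Comparing the defining infima of $K_1$ and $K'$ term by term then gives $K'=K_1$ on $\{\zeta_t^{O}\textup{ survives}\}$ — as elements of $\{1,2,\dots\}\cup\{\infty\}$, so no a priori finiteness is needed — and consequently $\tau_{K'}'=\tau_{K_1}'=\tau_{K_1}$, which is the second coordinate. For the third, recall that $K_0=0$, $\tau_{K_0}=\tau_0=0$ (as $r_0=0$), so $M_0$ is determined by the infimum of $r$ over $[0,\tau_{K_1})$; since $r_0=0$ while $r_{\tau_{K_1}}=K_1\ge1$, adjoining the right endpoint does not change the infimum, and substituting $r_t=r_t'$ together with $\tau_{K_1}=\tau_{K'}'$ identifies this quantity with the infimum of $r'$ over $[0,\tau_{K'}']$ defining $M'$; this is the asserted equality of the third coordinates.

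There is no serious obstacle here: the lemma is essentially a bookkeeping consequence of Lemma~\ref{piprendcoup1}. The only points needing a little care are (a) the upgrade from the fixed-$t$ identity $r_t=r_t'$ on $\{I_t\neq\emptyset\}$ to the simultaneous-in-$t$ identity on the survival event; (b) the harmless replacement of the half-open time window $[0,\tau_{K_1})$ by the closed one when matching $M_0$ with $M'$; and (c) keeping the sign conventions and endpoint conventions in the definitions of $K_1,\tau_{K_1},M_0$ (Definition~\ref{definbpts}) aligned with those of $K',\tau_{K'}',M'$. None of these amounts to more than a remark.
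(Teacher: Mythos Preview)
Your proposal is correct and takes exactly the approach the paper does: the paper's proof consists of the single sentence that the lemma ``is trivial, it is an immediate consequence of Lemma~\ref{piprendcoup1}'', and you have simply spelled out that consequence in detail. Your care about the endpoint and sign conventions in point~(c) is appropriate, since the paper is not entirely consistent there (compare the definition $M_0=K_0-\inf r_t$ with the stated $M'=\inf r_t'$), but this is a cosmetic issue in the paper rather than a gap in your argument.
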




\begin{proof}[proof of Proposition $\ref{PROPexpbnd}$]
Consider the setting of Lemma $\ref{bptsinf}$. By the definition of break points, Definition \ref{definbpts}, and Lemma $\ref{KeqK'}$ we have that on $\{\zeta_{t}^{O} \mbox{ survives}\}$, $K_{1}=Y_{N}$, $\tau_{K_{1}}=  T_{Y_{N}}$ and $M_{0}=\inf_{t \leq T_{Y_{N}}}\bar{r}_{t}$. It is thus sufficient to prove that the random variables $Y_{N}, T_{Y_{N}}, \inf_{t \leq T_{Y_{N}}}\bar{r}_{t}$ are exponentially bounded, merely because an exponentially bounded random variable is again exponentially bounded conditional on any set of positive probability.

We have that 
\begin{equation}\label{eq:YN}
Y_{N} = 1+ \sum\limits_{n=2}^{N} (Y_{k} - Y_{k-1}) \mbox{ on } \{N\geq2\},
\end{equation} 
while $Y_{1}:=1$, using this and Proposition \ref{expbounds3scp}, we will prove that $Y_{N}$ is bounded above in distribution by a geometric sum of $\mbox{i.i.d.}$ exponentially bounded random variables and hence is itself exponentially bounded. 

Let $\rho$ and $R$ be as in Proposition \ref{expbounds3scp}, we define $p_{R}(w)= \pr(R +1 = w, \rho<\infty)$, and $\bar{p}_{R}(w)= \pr(R +1= w | \mbox{ }\rho <\infty)$, for all integers $w\geq1$, define also $p= \pr(\rho = \infty)>0$ and $q=1-p$, where $p>0$ by Proposition \ref{couplDS}.

By $(\ref{cbpts1})$ of the statement of Lemma $\ref{bptsinf}$, we have that
\begin{equation}\label{eq:Y2}
\pr( Y_{2} -Y_{1} = w, \rho_{1} < \infty) = p_{R}(w) 
\end{equation}
$w\geq1$; similarly, from $(\ref{cbpts2})$ of the same statement, we have that, for all $n\geq1$,  
\begin{equation}\label{eq:rec1}
\pr(\rho_{n+1} = \infty| \mbox{ } \rho_{n} <\infty, Y_{n+1} = z, \mathcal{F}_{T_{Y_{n+1}}}) = p, 
\end{equation}
and also, 
\begin{equation}\label{eq:rec2}
\pr(Y_{n+1} - Y_{n} = w, \rho_{n}<\infty |\mbox{ } \rho_{n-1} <\infty, Y_{n}=z,\mathcal{F}_{T_{Y_{n}}}) = p_{R}(w) ,
\end{equation}
for all $w,z\geq1$.

However, $\{N=n\} = \{\rho_{k}<\infty \mbox{ for all } k =1,\dots,n-1 \mbox{ and } \rho_{n} = \infty\}$, $n\geq2$, 
and hence, 
$$
\displaylines{
\left\{\textstyle{\bigcap \limits_{n=1}^{m}} \{Y_{n+1}-Y_{n} = w_{n}\}, N=m+1\right\} = \hfill \cr
=  \left\{\textstyle{ \bigcap\limits_{n=1}^{m}} \{Y_{n+1}-Y_{n} = w_{n}, \rho_{n}<\infty\}, \rho_{m+1}=\infty\right\},}
$$
 for all $m\geq1$, using this, from $(\ref{eq:rec1})$, $(m-1)$ applications of $(\ref{eq:rec2})$, and $(\ref{eq:Y2})$, since $p_{R}(w)= q\bar{p}_{R}(w)$, we have that
\begin{equation*}\label{eq:prod-ind}
\pr\left(\bigcap \limits_{n=1}^{m} \{Y_{n+1}-Y_{n} = w_{n}\}, N=m+1\right)= p q^{m} \prod \limits_{n=1}^{m} \bar{p}_{R}(w_{n}),  
\end{equation*}
for all $m\geq1$ and $w_{n}\geq1$.  From the last display and $(\ref{eq:YN})$, due to Proposition \ref{expbounds3scp}, we have that $Y_{N}$ is exponentially bounded by an elementary conditioning argument as follows. Letting $(\tilde{\rho}_{k}, \tilde{R}_{k}), k\geq1$ be independent pairs of random variables each of which is distributed as $(\rho,R)$ and the geometric random variable $\tilde{N} := \inf\{n\geq1: \tilde{\rho_{n}} = \infty\}$, we have that $Y_{N}$ is equal in distribution to $\sum \limits_{k=0}^{\tilde{N}-1} \tilde{R_{k}}$, $\tilde{R}_{0}:=1$.


We proceed to prove that $T_{Y_{N}}$ and $\inf_{t \leq T_{Y_{N}}}\bar{r}_{t}$ are exponentially bounded random variables. By $(\ref{Yinf})$, letting $\bar{x}_{t} = \sup_{s \leq t}\bar{r}_{s}$, we have that $\{T_{Y_{N}} > t\}= \{\bar{x}_{t} \leq Y_{N}\}$; from this and set theory we have that, for any $a>0$
\begin{eqnarray}\label{bpexm}
\pr(T_{Y_{N}} > t) &=& \pr(\bar{x}_{t} \leq Y_{N}) \nonumber\\
&\leq& \pr(\bar{x}_{t} < at) + \pr(\bar{x}_{t}\geq at, \bar{x}_{t} \leq Y_{N}) \nonumber\\
&\leq& \pr(\bar{x}_{t} < at) + \pr(Y_{N} \geq \lfloor at \rfloor),
\end{eqnarray}
for all $t\geq0$, where $\lfloor \cdot\rfloor$ is the floor function; choosing $a>0$ as in Proposition $\ref{shadldev2}$, because $\bar{x}_{t} \geq \bar{r}_{t}$, and since $Y_{N}$ is exponentially bounded, we deduce by (\ref{bpexm}) that  $T_{Y_{N}}$ is exponentially bounded as well.

Finally, we prove that $M:= \inf_{t \leq T_{Y_{N}}}\bar{r}_{t}$ is exponentially bounded. 
From set theory, 
\[
\pr(M < -x) \leq \pr \left(T_{Y_{N}} \geq \frac{x}{\mu} \right) + \pr\left(T_{Y_{N}} < \frac{x}{\mu}, \{ \bar{r}_{s} \leq -x \mbox{ for some } s \leq T_{Y_{N}}\}\right), 
\]
because $T_{Y_{N}}$ is exponentially bounded, it is sufficient to prove that the second term of the right hand side decays exponentially. However, recall that $\bar{r}_{_{T_{Y_{N}}}} \geq 1$, hence, 
$$\displaylines{ \pr\left(T_{Y_{N}} < \frac{x}{\mu}, \{ \bar{r}_{s} \leq -x \mbox{ for some } s \leq T_{Y_{N}}\}\right) \leq \hfill \cr 
\leq \pr\left((\bar{r}_{t}- \bar{r}_{s}) > x \mbox{ for some } s  \leq \frac{x} {\mu} \mbox{ and } t  \leq \frac{x} {\mu} \right),}
$$
where the term on the right of the last display decays exponentially in $x$, because $(\bar{r}_{t} - \bar{r}_{s})$, $t >s$ is bounded above in distribution by $\Lambda_{\mu}(s,t]$, the number of events of a Poisson process at rate $\mu$ within the time interval $(s,t]$, by use of standard large deviations for Poisson processes, because $\Lambda_{\mu}(s,t] \leq \Lambda_{\mu}( 0,x / \mu]$ for any $s,t \in (0,x / \mu]$. 
\end{proof}

The next lemma is used in the proof of Proposition $\ref{PROPiid}$ following.
\begin{lem}\label{Xbdownii}
Consider the setting of the definition of break points, Definition \ref{definbpts}. For all $n\geq1$, we have that
\begin{equation}\label{indXn}
\big\{ \textstyle{ \bigcap \limits_{l=1}^{n} } \{(X_{l}, \Psi_{l}, M_{l-1}) = (x_{l}, t_{l}, m_{l-1}) \}, \zeta_{t}^{O} \textup{ survives}\big\} = \{ \zeta_{t}^{[\eta_{z_{n}},w_{n}]}\textup{ survives}, \tau_{z_{n}} = w_{n}, A \},
\end{equation}
for some event $A \in \mathcal{F}_{w_{n}}$, where $z_{n} = \sum\limits_{l=1}^{n} x_{l}$ and $w_{n} = \sum\limits_{l=1}^{n} t_{l}$.
\end{lem}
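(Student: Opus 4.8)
The plan is to prove the identity \eqref{indXn} by induction on $n$, and the key tool is Lemma \ref{KeqK'} applied not to the original process but to a translated and time-shifted copy. First I would unwind the definitions: on $\{\zeta_t^O \textup{ survives}\}$, the break point data $(X_1,\Psi_1,M_0) = (K_1,\tau_{K_1},M_0)$ is determined by the graphical construction restricted to the half-line to the right of the path of $r_t$ up to time $\tau_{K_1}$, together with $\mathcal{F}_{\tau_{K_1}}$. By Lemma \ref{KeqK'} this data coincides with $(K',\tau'_{K'},M')$ computed from $\zeta_t^{\eta'}$ for any $\eta'$ with $\eta'(0)=1$, $\eta'(x)=-1$ for $x\geq 1$; and by Lemma \ref{piprendcoup1} the rightmost-infected path of $\zeta_t^O$ agrees with that of $\zeta_t^{\eta'}$ on $\{I_t \neq \emptyset\}$. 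The base case $n=1$ is then essentially a restatement: the event $\{(X_1,\Psi_1,M_0)=(x_1,t_1,m_0)\}\cap\{\zeta_t^O\textup{ survives}\}$ says precisely that $r_{t_1}=z_1=x_1$, that $\tau_{z_1}=t_1=w_1$, that the infimum of $r$ over $[0,\tau_{z_1}]$ is $z_1-m_0$, that $\zeta_t^{[\eta_{z_1},\tau_{z_1}]}$ survives (this is how $K_1$ is selected), and that for every $k$ with $1\leq k<z_1$ the process $\zeta_t^{[\eta_k,\tau_k]}$ does \emph{not} survive. All of these last conditions except $\{\zeta_t^{[\eta_{z_1},w_1]}\textup{ survives}\}$ are $\mathcal{F}_{w_1}$-measurable — the non-survival of finitely many processes started before time $\tau_{z_1}$ is determined by the graphical construction up to time $\tau_{z_1}$ only on the event of interest, which is itself in $\mathcal{F}_{w_1}$ once we know $\tau_{z_1}=w_1$; I would package all of them into the event $A\in\mathcal{F}_{w_1}$.

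For the inductive step, assume \eqref{indXn} holds at level $n-1$ with corresponding event $A'\in\mathcal{F}_{w_{n-1}}$, so the left-hand side at level $n$ is the intersection of $\{\zeta_t^{[\eta_{z_{n-1}},w_{n-1}]}\textup{ survives}, \tau_{z_{n-1}}=w_{n-1}, A'\}$ with $\{(X_n,\Psi_n,M_{n-1})=(x_n,t_n,m_{n-1})\}$. The point is that on $\{\tau_{z_{n-1}}=w_{n-1}\}$ and on the survival event, the break point $K_{n-1}\times\tau_{K_{n-1}}$ equals $z_{n-1}\times w_{n-1}$, and by the strong Markov property together with monotonicity in the initial configuration (Theorem \ref{moninit1}) and Lemma \ref{piprendcoup1}, the increment $(X_n,\Psi_n,M_{n-1})$ is computed from the shifted process $(\zeta_{t+w_{n-1}}^{[\eta_{z_{n-1}},w_{n-1}]} - z_{n-1})_{t\geq0}$ exactly as $(K_1,\tau_{K_1},M_0)$ is computed from $\zeta_t^O$ — this is the translated analogue of Lemma \ref{KeqK'}. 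Hence, reusing the base-case analysis for this shifted process, the event $\{(X_n,\Psi_n,M_{n-1})=(x_n,t_n,m_{n-1})\}$ intersected with the survival of $\zeta_t^{[\eta_{z_{n-1}},w_{n-1}]}$ equals $\{\zeta_t^{[\eta_{z_n},w_n]}\textup{ survives}, \tau_{z_n}=w_n, A''\}$ for some $A''\in\mathcal{F}_{w_n}$ (note $\mathcal{F}_{w_{n-1}}\subset\mathcal{F}_{w_n}$, so $A'$ is absorbed too). Taking $A = A'\cap A''\in\mathcal{F}_{w_n}$ completes the induction.

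The main obstacle I anticipate is bookkeeping rather than any deep difficulty: one must be careful that the conditions "$\zeta_t^{[\eta_k,\tau_k]}$ does not survive for $K_{n-1}<k<K_n$" are genuinely $\mathcal{F}_{w_n}$-measurable on the relevant event. The subtlety is that $\tau_k$ is a stopping time and the non-survival of a process started at $\tau_k$ is a priori \emph{not} $\mathcal{F}_{w_n}$-measurable — survival is a tail event. This is resolved by observing, via Lemma \ref{Sk} (exactly as in the derivation of \eqref{Yinf} in Lemma \ref{bptsinf}), that on the event in question the non-survival of $\zeta_t^{[\eta_k,\tau_k]}$ for those intermediate $k$ is \emph{implied} by the event $\{\zeta_t^{[\eta_{z_n},w_n]}\textup{ survives}\}\cap\{\tau_{z_n}=w_n\}$ together with the values of $r_t$ on $[0,w_n]$: precisely, $\rho_k := \inf\{t\geq\tau_k: \mathcal{I}(\zeta_t^{[\eta_k,\tau_k]})=\emptyset\}$ satisfies $\rho_k \leq \tau_{z_n} = w_n$ for each such $k$, so non-survival is witnessed before time $w_n$ and is therefore $\mathcal{F}_{w_n}$-measurable. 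Making this implication precise and folding it into $A$ is the only step requiring genuine care.
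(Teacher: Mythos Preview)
Your proposal is correct and follows essentially the same route as the paper. The paper's proof is more compressed: it invokes the restart algorithm of Lemma~\ref{bptsinf} to assert directly that $\{(Y_N,T_{Y_N},\inf_{t\le T_{Y_N}}\bar r_t)=(x_1,t_1,m_0)\}=\{\zeta_t^{[\eta_{x_1},t_1]}\textup{ survives},\,T_{x_1}=t_1,\,B\}$ for some $B\in\mathcal F_{t_1}$, then uses Lemma~\ref{KeqK'} to pass to $\zeta_t^O$ and replaces $\{\zeta_t^O\textup{ survives}\}$ by $\{I_{t_1}\neq\emptyset\}\in\mathcal F_{t_1}$, and finally says the general case follows by ``repeated applications'' of this display. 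Your induction with the shifted process is exactly that repeated application made explicit, and your identification of the ``obstacle''---that the non-survival of the intermediate processes $\zeta_t^{[\eta_k,\tau_k]}$ is \emph{a priori} a tail event---together with its resolution via Lemma~\ref{Sk} (these processes die before $\tau_{z_n}$, as in the derivation of \eqref{Yinf}) is precisely the content the paper hides inside the word ``trivially'' and the black-box event $B$.
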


\begin{proof}
Considering the setting of Lemma $\ref{bptsinf}$ we have that 
\[
\{(Y_{N},T_{Y_{N}}, \inf_{t \leq T_{Y_{N}}}\bar{r}_{t}) = (x_{1}, t_{1}, m_{0})\} = \{\zeta_{t}^{[\eta_{x_{1}}, t_{1}]} \mbox{ survives}, T_{x_{1}} = t_{1}, B\},
\] 
for $B \in \mathcal{F}_{t_{1}}$; from this and Lemma $\ref{KeqK'}$ we have that 
$$\displaylines{  \{ (X_{1}, \Psi_{1}, M_{0}) = (x_{1}, t_{1}, m_{0}) ,\zeta_{t}^{O} \mbox{survives}\} \hfill \cr  
\hspace{7mm}=\{\zeta_{t}^{[\eta_{x_{1}},t_{1}]} \mbox{ survives},\tau_{x_{1}} = t_{1}, B, \zeta_{t}^{O} \mbox{survives}\} \cr 
= \{\zeta_{t}^{[\eta_{x_{1}},t_{1}]} \mbox{ survives},\tau_{x_{1}} = t_{1}, B, I_{t_{1}} \not=\emptyset\}
}
$$
for all $x_{1}\geq 1$, $t_{1} \in \R_{+}$, $m_{0} \geq0$, because $\{ I_{t_{1}} \not=\emptyset \} \in \mathcal{F}_{t_{1}}$ we have thus proved $(\ref{indXn})$ for $n=1$, by repeated applications of the last display the proof for general $n\geq1$ is derived. 
\end{proof}


\begin{proof}[proof of Proposition $\ref{PROPiid}$]
Consider the setting of the definition of break points, Definition \ref{definbpts}.
Assume that $K_{n}$, $\tau_{K_{n}}$, $M_{n-1}$ are almost surely finite, we will prove that
\begin{eqnarray}\label{Xn}
\pr\left((X_{n+1},\Psi_{n+1},M_{n}) = (x, t,m)\vline\mbox{ } \textstyle{ \bigcap \limits_{l=1}^{n}} \{(X_{l},\Psi_{l},M_{l-1}) =(x_{l}, t_{l},m_{l-1})\}, \zeta_{t}^{O} \mbox{survives}\right)&& \nonumber\\
=\mbox{ } \pr\big( (X_{1},\Psi_{1},M_{0})= (x, t,m)|\mbox{ } \zeta_{t}^{O} \mbox{survives} \big)\hspace{10mm}
\end{eqnarray}
for all $(x_{l},t_{l},m_{l-1})$, $x_{l} \geq 1, t_{l}\in \R_{+}$, $m_{l-1}\geq0$, $l=1,\dots,n$, and hence in particular that $K_{n+1}$, $\tau_{K_{n+1}}$, $M_{n}$ are exponentially bounded.  By induction because $K_{1}$ and $\tau_{K_{1}},M_{0}$ are exponentially bounded by Proposition $\ref{PROPexpbnd}$ we have that $(\ref{Xn})$ completes the proof of Proposition $\ref{PROPiid}$ by Bayes's sequential formula. 

It remains to prove $(\ref{Xn})$, rewrite the conditioning event in its left hand side according to $(\ref{indXn})$ in Lemma $\ref{Xbdownii}$ and note that 
\[
\{\tau_{z_{n}} = w_{n}\} \subset \{ \zeta_{w_{n}}^{O}(z_{n})=1 \mbox{ and } \zeta_{w_{n}}^{O}(y)= -1, \mbox{ for all } y \geq z_{n}+1\},
\]
thus, applying Lemma $\ref{KeqK'}$, gives the proof by independence of the Poisson processes in disjoint parts of the graphical construction, because $(\zeta_{t+w_{n}}^{[\eta_{z_{n}}, w_{n}]}-z_{n})_{t\geq0}$ is equal in distribution to $(\zeta_{t}^{O})_{t\geq0}$ by translation invariance.
\end{proof}



\section{Proof of Theorem $\ref{THEthm1}$}\label{S32}
We denote by $\bar{\pr}$ the probability measure induced by the construction of the process conditional on $\{\zeta_{t}^{O} \mbox{ survives}\}$ and, by $\bar{\E}$ the expectation associated to $\bar{\pr}$. Consider the setting of Theorem $\ref{THEprop}$ and let $\displaystyle{ \alpha =  \frac { \bar{\E} (r_{\tilde{\tau}_{1}})} { \bar{\E} (\tilde{\tau}_{1})}}$, $\alpha \in (0, \infty)$. 

\begin{proof}[proof of (i)]
Because $r_{\tilde{\tau}_{n}} = \sum\limits_{m = 1}^{n} (r_{\tilde{\tau}_{m}} - r_{\tilde{\tau}_{m-1}})$ and $\tilde{\tau}_{n} = \sum\limits_{m=1}^{n}( \tilde{\tau}_{m} - \tilde{\tau}_{m-1})$, $n\geq1$, using the strong law of large numbers twice gives us that
\begin{equation}\label{spdatbpts} 
\bar{\pr} \left( \lim_{n \rightarrow \infty} \frac{r_{\tilde{\tau}_{n}}}{\tilde{\tau}_{n}} = \alpha \right)=1,
\end{equation} 
we prove that indeed $\displaystyle{\lim_{t\rightarrow \infty}\frac{r_{t}}{t} =\alpha}$, $\bar{\pr}$ a.s..
From Theorem $\ref{THEprop}$ we have that
\begin{equation}\label{intt}
\frac{r_{\tilde{\tau}_{n}}-M_{n}}{\tilde{\tau}_{n+1}} \leq \frac{r_{t}}{t} \leq \frac{r_{\tilde{\tau}_{n+1}}} {\tilde{\tau}_{n}} , \mbox{ for all }  t\in[\tilde{\tau}_{n}, \tilde{\tau}_{n+1}), 
\end{equation}
$n\geq0$. Further, because $(M_{n})_{n\geq0}$, $M_{0}\geq0$, is a sequence of i.i.d.\ and exponentially bounded random variables we have that
\begin{equation}\label{spdM}
\bar{\pr}  \left(\lim_{n \rightarrow \infty} \frac{M_{n}}{n} =  0 \right) =1,
\end{equation}
by the 1st Borel-Cantelli lemma. Consider any $ a < \alpha$, by $(\ref{intt})$ we have that
\begin{equation}\label{eq:spdlessa}
\left\{ \frac{r_{t_{k}}}{t_{k}} < a  \mbox{ for some } t_{k} \uparrow \infty \right\}\subseteq \left\{ \limsup_{n\rightarrow \infty} \left\{\frac{r_{\tilde{\tau}_{n}}-M_{n}}{\tilde{\tau}_{n+1}} <  a \right\}\right\}, 
\end{equation}
however $\displaystyle{ \bar{\pr}\left(\limsup_{n\rightarrow \infty} \left\{\frac{r_{\tilde{\tau}_{n}}-M_{n}}{\tilde{\tau}_{n+1}} <  a \right\}\right)=0}$, to see this simply use $(\ref{spdM})$ and $(\ref{spdatbpts})$ to deduce that $\displaystyle{\lim_{n\rightarrow \infty}\frac{r_{\tilde{\tau}_{n}}-M_{n}}{\tilde{\tau}_{n+1}} = \alpha}$, $\bar{\pr}$ $\mbox{a.s.}$. By use of the upper bound in $(\ref{intt})$ and $(\ref{spdatbpts})$, we also have that for any $a > \alpha$, $\displaystyle{\bar{\pr}\left( \left\{ \frac{r_{t_{k}}}{t_{k}} > a  \mbox{ for some } t_{k} \uparrow \infty \right\}\right) =0}$, this completes the proof of \textit{(i)}. 
\end{proof}

\begin{proof}[proof of (ii)] We will prove that
\begin{equation*}
\lim_{t\rightarrow \infty}\bar{\pr} \left( \frac{r_{t} - \alpha t}{\sqrt{t}} \leq x \right) = \Phi\left(\frac{x}{\sigma^{2}}\right),
\end{equation*}
for some $\sigma^{2}>0$, $x \in\R$,  where $\Phi$ is the standard normal distribution function, $\mbox{i.e.}$,  $\displaystyle{ \Phi(y) := \frac{1}{\sqrt{2\pi}} \int_{-\infty}^{y}\exp\left(-\frac{1}{2} z^{2}\right)dz}$, $y\in \R$.

Define $N_{t}= \sup\{n: \tilde{\tau}_{n} <t\}$; evoking Lemma 2 in Kuczek \cite{K}, $\mbox{p.}$ 1330--1331, which applies due to Theorem $\ref{THEprop}$, we have that 
\begin{equation*}\label{kucinitNt}
\lim_{t\rightarrow \infty}\bar{\pr} \left( \frac{r_{N_{t}} - \alpha t}{\sqrt{t}} \leq x \right) = \Phi\left(\frac{x}{\sigma^{2}}\right),
\end{equation*}
$x \in\R$. From this, by standard association of convergence concepts, known as Slutsky's theorem, it is sufficient to show that
\begin{equation}\label{clt0}
\bar{\pr} \left(\lim_{t \rightarrow \infty } \frac{r_{t} - r_{N_{t}}}{\sqrt{t}}=0 \right)=1,
\end{equation}
and that $\sigma^{2}$ is strictly positive. Note however that, by Theorem $\ref{THEprop}$ we have that,
\begin{equation}\label{cltbouds}
\frac{ M_{\tilde{N}_{t}}} {\sqrt{t}} \leq \frac{ r_{t} - r_{N_{t}}}{\sqrt{t}} \leq \frac{ r_{\tilde{\tau}_{N_{t}+1}} - r_{\tilde{\tau}_{N_{t}}}}{\sqrt{t}} 
\end{equation}
for all $t\geq0$. 

We show that $(\ref{clt0})$ follows from $(\ref{cltbouds})$. Because $(r_{\tilde{\tau}_{n+1}} - r_{\tilde{\tau}_{n}})_{n\geq0}$, $r_{\tilde{\tau}_{1}}\geq1$, are $\mbox{i.i.d.}$ and exponentially bounded, by the 1st Borel-Cantelli lemma, and then the strong law of large numbers, we have that 
\[
\lim_{n\rightarrow \infty} \frac{ \frac{1}{\sqrt{n}}(r_{\tilde{\tau}_{n+1}} - r_{\tilde{\tau}_{n}})}{\sqrt{ \frac{\tilde{\tau}_{n}}{n}}}= 0
\] 
$\bar{\pr}$ $\mbox{a.s.}$, from the last display and emulating the argument given in $(\ref{eq:spdlessa})$ we have that 
$\displaystyle{ \lim_{t\rightarrow \infty} \frac{ r_{\tilde{\tau}_{N_{t}+1}} - r_{\tilde{\tau}_{N_{t}}}}{\sqrt{t}} =0}$, $\bar{\pr}$ $\mbox{a.s.}$.  Similarly, because $(M_{n})_{n\geq0}$ are non-negative $\mbox{i.i.d.}$ and exponentially bounded random variables, we also have that 
$\displaystyle{\lim_{t\rightarrow \infty}\frac{ M_{\tilde{N}_{t}}}{\sqrt{t}} =0}$, $\bar{\pr} \mbox{ a.s.}$. 

Finally, we show that $\sigma^{2}>0$. As in the proof of Corollary 1 in Kuczek \cite{K}, because $\displaystyle{ \alpha = \frac { \bar{\E} (r_{\tilde{\tau}_{1}})} { \bar{\E} (\tilde{\tau}_{1})}}$, we need to show that 
$\bar{\E}\left( r_{\tilde{\tau}_{1}}\bar{\E}(\tilde{\tau}_{1})  - \tilde{\tau}_{1} \bar{\E} (r_{\tilde{\tau}_{1}})\right)^{2}>0$. However, because  $r_{\tilde{\tau}_{1}} \geq 1$, this follows by Chebyshev's inequality. This completes the proof of \textit{(ii)}. 

\end{proof}



For the remainder of the proof consider the graphical construction for $(\lambda, \mu)$ such that $\mu >\mu_{c}$ and $\mu\geq\lambda>0$. Consider $\zeta_{t}^{O}$, let $r_{t} =\sup I_{t}$ and $l_{t} = \inf I_{t}$ be  respectively the rightmost and leftmost infected of $I_{t}= \mathcal{I}(\zeta_{t}^{O})$. Consider also $\xi_{t}^{\Z}$, the contact process with parameter $\mu$ started from $\Z$. By Lemma $\ref{cccoup}$ we have that, for all $t\geq0$, 
\begin{equation}\label{coupHtZ}
I_{t} =  \xi_{t}^{\Z} \cap [l_{t},r_{t}] \mbox{ \textup{on} } \{I_{t} \not= \emptyset\}.
\end{equation}

\begin{proof}[proof of (iii)]
Let $\theta= \theta(\mu) >0$ be the density of the upper invariant measure, that is, $\displaystyle{ \theta= \lim_{t\rightarrow \infty}\pr( x \in \xi_{t}^{\Z})}$. 
We prove that $\displaystyle{\lim_{t\rightarrow \infty}\frac{|I_{t}|}{t} = 2 \alpha \theta,}$ $\bar{\pr}$ a.s.. 

Considering the interval $[\max\{l_{t}, -\alpha t\}, \min\{r_{t},\alpha t\}]$, we have that for all $t\geq0$,
\begin{equation}\label{sllnineq}
\vline \mbox{ } \sum_{x=l_{t}}^{r_{t}} 1( x \in \xi_{t}^{\Z}) - \sum_{x =- \alpha t}^{ \alpha t} 1(x \in \xi_{t}^{\Z}) \mbox{ }\vline \leq | r_{t} - \alpha t| + | l_{t} + \alpha t|, \mbox{ on }\{I_{t} \not= \emptyset \},
\end{equation}
where $1(\cdot)$ denotes the indicator function. However, by $(\ref{coupHtZ})$,  $\displaystyle{ |I_{t}|= \sum_{x=l_{t}}^{r_{t}} 1(x \in \xi_{t}^{\Z})}$,  $\mbox{on } \{I_{t} \not= \emptyset \}$,  thus, because $\displaystyle{\lim_{t\rightarrow \infty} \frac{r_{t}}{t} = \alpha}$ and, by symmetry, $\displaystyle{  \lim_{t\rightarrow \infty} \frac{l_{t}}{t} =-\alpha}$, $\bar{\pr}$ a.s., the proof follows from $(\ref{sllnineq})$ because, for any $a>0$, $\displaystyle{ \lim_{t\rightarrow \infty} \frac{1}{t}\sum_{|x| \leq a t} 1(x \in \xi_{t}^{\Z}) = 2 a \theta}$, $\pr$ a.s., for a proof we refer see equation (19) in the proof of Theorem 9 of Durrett and Griffeath \cite{DG}.
\end{proof}

\begin{proof}[proof of (iv)] Let $\rho = \inf\{t\geq0: I_{t} = \emptyset\}$. In the context of set valued processes, by general considerations, see Durrett \cite{D95}, it is known that weak convergence is equivalent to convergence of finite dimensional distributions and that, by inclusion-exclusion,  it is equivalent to show that for any finite set of sites $F\subset \Z$ 
\begin{equation*}\label{eq:compconv}
\lim_{t\rightarrow \infty} \pr(I_{t} \cap F = \emptyset) = \pr(\rho < \infty) + \pr(\rho = \infty)\phi_{F}(\emptyset),
\end{equation*}
where $\displaystyle{ \phi_{F}(\emptyset):= \lim_{t\rightarrow \infty} \pr(\xi_{t}^{\Z} \cap F =\emptyset)}$. By set theory, it is sufficient to prove that
$\displaystyle{ \lim_{t\rightarrow \infty} \pr(I_{t} \cap F = \emptyset, \rho \geq t)= \pr(\rho = \infty)\phi_{F}(\emptyset)}$,  because $\{\rho <t\} \subseteq \{I_{t}\cap F =\emptyset\}$. However, emulating the proof of the respective result for the contact process (see $\mbox{e.g.}$ Theorem 5.1 in Griffeath \cite{G}), we have that $\displaystyle{\lim_{t \rightarrow \infty} \pr(\xi_{t}^{\Z} \cap F = \emptyset, \rho \geq t) = \pr(\rho =\infty)\phi_{F}(\emptyset)}$, hence, it is sufficient to prove that 
\begin{equation}\label{cccoupconseq}
\limsup_{t \rightarrow \infty} \pr(I_{t} \cap F = \emptyset, \rho \geq t) \leq  \lim_{t\rightarrow \infty}\pr(\xi_{t}^{\Z} \cap F = \emptyset, \rho \geq t),
\end{equation}
because also $\{ I_{t} \cap F = \emptyset, \rho \geq t\} \supseteq \{ \xi_{t}^{\Z} \cap F = \emptyset, \rho \geq t\}$,  by $(\ref{coupHtZ})$. 

It remains to prove $(\ref{cccoupconseq})$. By elementary calculations, 
\begin{equation*}\label{ccrhoinf}
\pr(I_{t} \cap F = \emptyset, \rho = \infty) - \pr(\xi_{t}^{\Z} \cap F = \emptyset, \rho \geq t) \leq \pr(  \xi_{t}^{\Z} \cap F \supsetneq I_{t} \cap F, \rho = \infty), 
\end{equation*}
for all $t\geq0$, where we used that by $(\ref{coupHtZ})$, $I_{t} \subset \xi_{t}^{\Z}$ for all $t\geq0$. From the last display above and set theory we have that 
$$\displaylines{\pr(I_{t} \cap F = \emptyset, \rho \geq t)-  \pr(\xi_{t}^{\Z} \cap F = \emptyset, \rho \geq t) \hfill \cr
\hfill \leq \pr( \xi_{t}^{\Z} \cap F \supsetneq I_{t} \cap F, \rho = \infty) + \pr( t<\rho <\infty),  }$$
for all $t\geq0$, however the limit as $t\rightarrow \infty$ of both terms of the right hand side in the above display is $0$, for the former this comes by $(\ref{coupHtZ})$, because $\displaystyle{\lim_{t\rightarrow \infty} r_{t} = \infty}$ and $\displaystyle{\lim_{t\rightarrow \infty} l_{t} = \infty}$, $\bar{\pr}$ $\mbox{a.s.}$, while for the latter this is obvious.


\end{proof}

\chapter{Convergence rates} 

\paragraph{Abstract:} The contact process on the integers with nearest neighbours interaction and infection rate $\mu$ altered so that initial infections occur at rate $\lambda$ instead is considered. It is known that regardless of the value of $\lambda$, if $\mu$ is less than the contact process's critical value then the process dies out, while if $\mu$ is greater than that value then the process survives. In the former case the time to die out is shown to be exponentially bounded; in the latter case, assuming additionally that $\mu\geq\lambda$, the ratio of the limit of the speed to that of the unaltered contact process is shown to be at most $\lambda / \mu$.
\vfill



%
\section{Introduction and main results}\label{S0}
This chapter is concerned with the three state contact process on the integers with nearest neighbours interaction and parameters $(\lambda,\mu)$, which is briefly described as follows. The collection of the states of the sites at time $t$ is denoted by $\zeta_{t}= \{\zeta_{t}(x), x\in \Z\}$ and is referred to as the configuration of the process.  The site $x$ at time $t$ is regarded   as infected if $\zeta_{t}(x) = 1$, as susceptible and never infected if $\zeta_{t}(x) = -1$ and, as susceptible and previously infected if $\zeta_{t}(x) = 0$. The dynamics for the evolution of $\zeta_{t}$ are specified locally. Transitions of $\zeta_{t}(x)$ occur according to the rules:  $1 \rightarrow 0$ at rate 1, $-1 \rightarrow 1$  at rate $\lambda \hspace{0.2mm} n(x)$ and $0 \rightarrow 1$  at rate $\mu \hspace{0.2mm}n(x)$, where $n(x)$ is the number of nearest neighbours of $x$ that are infected in $\zeta_{t}$. For a formal definition of the continuous time Markov process $\zeta_{t}$ on $\{-1,0,1\}^{\Z}$ we refer the reader to \cite{D95}, \cite{L}.


We shall use $\zeta_{t}^{O}$ to denote the process started from the origin infected and all other sites susceptible and never infected, this configuration is referred to as the \textit{standard initial configuration}. In general however,  $\zeta_{t}^{\eta}$ will denote the process started from configuration $\eta$. The process is said to \textit{survive} 
if $(\lambda,\mu)$ are such that $\pr(\zeta_{t}^{O} \mbox{ survives})>0$, where the event $\{\zeta_{t} \mbox{ survives}\}$ is an abbreviation for $\{\forall \hspace{0.5mm} t\geq0, \hspace{1mm} \zeta_{t}(x) =1 \text{ for some } x \}$. When the process does not survive it is said to \textit{die out}.  
 
When $(\lambda,\mu)$ are such that $\lambda = \mu$ the process is reduced to the well known contact process. It is well known that the contact process exhibits a phase transition phenomenon; that is, letting $\mu_{c}$ denote its critical value on the integers with nearest neighbours interaction,  $0<\mu_{c}<\infty$. For an account of various related results and proofs we refer the reader to \cite{L}, \cite{D88} and \cite{L99}. We also note that for this process we will identify a configuration with the subset of $\Z$ that corresponds to the set of its infected sites, since states $-1$ and $0$ are effectively equivalent.


It is shown in Durrett and Schinazi \cite{DS} that for $(\lambda,\mu)$ such that $\mu< \mu_{c}$ and $\lambda<\infty$ the process dies out. Taking our own approach we extend this result by giving the following exponential bounds for the range and the duration of the epidemic. 

\begin{thm}\label{thmsub}
For $(\lambda,\mu)$ such that $\mu<\mu_{c}$ and $\lambda<\infty$, there exists $\delta<1$ such that
$\pr\left(\exists \hspace{0.5mm} t \textup{ s.t., } \zeta_{t}^{O}(n)=1 \textup{ or } \zeta_{t}^{O}(-n)=1 \right) \leq \delta^{n}, \mbox{ for all } n\geq1$; further, there exist $C$ and $\gamma>0$ such that
$\pr\left(\zeta_{t}^{O}(x) =1 \textup{ for some } x \right) \leq C e^{-\gamma t}, \mbox{ for all } t\geq0.$
\end{thm}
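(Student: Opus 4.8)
The plan is to reduce the statement about the three state contact process $\zeta_t^O$ with $\mu < \mu_c$ and $\lambda < \infty$ to the corresponding exponential decay statements for the ordinary subcritical contact process. The key observation is that when $\mu < \mu_c$, even though $\lambda$ may exceed $\mu$, the infection can only spread by first hitting never-infected sites, and once a site has been infected it becomes ``susceptible and previously infected'' and thereafter behaves with the smaller rate $\mu$. So I would first set up a domination: I claim $\mathcal{I}(\zeta_t^O)$ is contained in $\xi_t^{\{0\}}$, where $\xi^{\{0\}}$ is the contact process with the \emph{larger} parameter $\max\{\lambda,\mu\}$ started from the origin. Actually the cleaner route, and the one consistent with the graphical construction already set up in Section~\ref{grrep} (which uses $\lambda$-arrows and $(\mu-\lambda)$-arrows when $\mu \geq \lambda$), is to handle the two cases $\lambda \leq \mu$ and $\lambda > \mu$ separately. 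When $\lambda \leq \mu < \mu_c$, the graphical construction gives $\mathcal{I}(\zeta_t^O) \subseteq \xi_t^{\{0\}}$ directly, where $\xi^{\{0\}}$ is the contact process with parameter $\mu < \mu_c$: indeed every transition $-1\to 1$ or $0\to 1$ of $\zeta^O$ occurs along an arrow (of $\lambda$- or $(\mu-\lambda)$-type) that is present in the percolation structure defining $\xi^{\{0\}}$, so any infected site of $\zeta^O$ is connected by an open path to the origin. When $\lambda > \mu$, one builds the analogous construction with $\mu$-arrows at rate $\mu$ and extra $(\lambda-\mu)$-arrows at rate $\lambda-\mu$ that act only on $-1$ sites; then $\mathcal{I}(\zeta_t^O) \subseteq \xi_t^{\{0\}}$ where $\xi^{\{0\}}$ now has parameter $\lambda$, which may be supercritical, so this naive domination is not enough.

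To deal with the potentially large $\lambda$, the point is that a never-infected site, once infected, never again uses the rate-$\lambda$ arrows. So I would track the set of ``ever-infected'' sites $E_t = \{x : \zeta_s^O(x) = 1 \text{ for some } s \leq t\}$. Since $\mu < \mu_c$ and $E_t$ is an interval-like growing set, the growth of $E_t$ is governed as follows: at the boundary of $E_t$, a never-infected neighbour gets infected at rate $\lambda$ (using the extra arrows), contributing at most a finite-rate enlargement of the range; but \emph{inside} $E_t$ the dynamics are exactly those of the contact process with parameter $\mu$. Concretely, let $r_t = \sup \mathcal{I}(\zeta_t^O)$ and $R = \sup_{t} r_t$; I want to show $\pr(R \geq n)$ and the duration $\pr(\rho < \infty, \rho > t)$ decay exponentially. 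The cleanest formalization: the rightmost infected $r_t$ increases only when either (a) an existing infected site at the current boundary transmits to the never-infected site to its right (rate $\leq \lambda$), or (b) — this cannot happen, since to the right of $r_t$ all sites are never-infected. So each unit increase of $r_t$ to a new record requires the site $r_t$ to be infected at the moment an arrow fires to $r_t + 1$; but between successive records, the process restricted to $(-\infty, r_t]$ is dominated by a subcritical ($\mu < \mu_c$) contact process, which with probability bounded away from $1$ dies out before producing any new record. This is exactly the renewal/record structure used in the proof of Proposition~\ref{expbounds3scp}, and indeed Theorem~2.30 of Liggett~\cite{L99} in its ``subcritical'' form (exponential decay of the range and of the survival time for a subcritical contact process) is the tool. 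So I would invoke that each excursion adding a new record to $r_t$ succeeds with probability $\leq q < 1$ independently, whence $R$ is stochastically dominated by a geometric number of steps and $\pr(R \geq n) \leq \delta^n$; the same structure, combined with the fact that between records the process dies out in time with an exponential tail (subcritical contact process survival time has exponential tail, Liggett Theorem~2.48 or the $\mu<\mu_c$ analogue), gives $\pr(\zeta_t^O \text{ alive}) \leq Ce^{-\gamma t}$.

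More carefully on the duration bound: write $\rho = \inf\{t : \mathcal{I}(\zeta_t^O) = \emptyset\}$. On $\{\rho > t\}$ either $R$ is large (probability $\leq \delta^{\lfloor \lambda t / C\rfloor}$ or so, via the Poisson bound on how fast $r_t$ can move, exactly as in Proposition~\ref{expbounds3scp}), or $R$ is bounded but the infection persists on a bounded interval for time $> t$; on a bounded interval of length $n$ the three state contact process is a finite-state Markov chain whose only absorbing state is the all-$\{-1,0\}$ trap, and since $\mu < \mu_c$ the relevant escape rate gives an exponential-in-$t$ tail (alternatively, dominate by the subcritical contact process confined to the interval). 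Splitting $\{\rho > t\} \subseteq \{R \geq an\} \cup \{R < an, \rho > t\}$ for suitable $a$ and balancing the two exponential rates yields the claim.

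The main obstacle I anticipate is making the domination precise in the case $\lambda > \mu$: one must set up a coupling in which the ``extra'' rate-$(\lambda-\mu)$ transmissions act only on never-infected sites and argue that these only affect the \emph{first} time a site is reached, so that the long-run persistence is still controlled by the subcritical parameter $\mu$. The bookkeeping — separating ``record-setting'' excursions (governed by $\lambda$, but each succeeding with probability $< 1$) from ``interior'' dynamics (governed by $\mu < \mu_c$) — is exactly the kind of renewal argument already deployed for Proposition~\ref{expbounds3scp} and Proposition~\ref{PROPexpbnd}, so I would lean heavily on reusing that machinery rather than reproving it from scratch.
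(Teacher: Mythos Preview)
Your outline matches the paper's approach closely: both prove the spatial bound by tracking successive enlargements of the range $[\min I_t,\max I_t]$, arguing that each enlargement step succeeds with probability at most some $q<1$ uniformly in the current range, and then deduce the temporal bound by splitting on the size of the final range. However, there is a genuine gap at the heart of your argument, and your cross-references to Chapter~1 are misplaced.

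The step you gloss over is the uniform bound: you assert that ``between successive records, the process restricted to $(-\infty,r_t]$ is dominated by a subcritical contact process, which with probability bounded away from $1$ dies out before producing any new record.'' The domination is fine, but the claim that the probability of dying out \emph{before ever touching the boundary} is bounded below by some $\epsilon>0$ \emph{independent of the current range} $[-N,N]$ is precisely the nontrivial ingredient. A subcritical contact process on $\Z$ started from $[0,N]$ dies out a.s., but a priori the probability that it does so without ever infecting $-1$ or $N+1$ might tend to $0$ as $N\to\infty$. The paper isolates this as Lemma~\ref{ANScp} and proves it by combining a duality estimate (Lemma~\ref{Tparenthexp}: the probability that the constrained process occupies an endpoint at time $t$ decays like $e^{-\psi t}$ uniformly in $N$) with the Harris--FKG inequality applied to the decreasing events $\{\text{no escape during }(k-1,k]\}$. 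Without this lemma your renewal argument does not give a uniform $q<1$.

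Separately, your appeals to Proposition~\ref{expbounds3scp} and Proposition~\ref{PROPexpbnd} are off target: those results live in the regime $\mu>\mu_c$, $\mu\geq\lambda$, and concern the conditional-on-survival break-point structure; they have no bearing on the subcritical estimates needed here. The machinery you should be invoking is instead the subcritical exponential decay of Theorem~\ref{thmpre1} (via duality), and the paper builds the duration bound (Proposition~\ref{propsub2}) from it through Corollary~\ref{Teta}, which gives uniform exponential moments for the waiting time until the range expands. Your final ``balance the two exponential rates'' sketch is in the right direction, but the paper's version iterates the exponential-moment bound on $\tau_k-\tau_{k-1}$ to get $\E(e^{\theta\tau_k\mathbf 1_{\{\tau_k<\infty\}}})\leq C^k$ and then optimizes over $a>0$, which is what makes the balancing actually go through.
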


The method of proof of the first part of Theorem \ref{thmsub} is based on deduction to the contact process, for which we prove that the probability that the infection never escapes an interval of infected sites is bounded away from zero uniformly in the size of the interval. The technique of proof of the second part is based on heuristic, ad hoc arguments and the first part.

For $(\lambda,\mu)$ such that $\mu>\mu_{c}$ and $\lambda >0$ it is also shown in \cite{DS} that the process survives. Assuming further that $\mu\geq\lambda$ and letting $r_{t} = \sup\{ x: \zeta_{t}^{O}(x)=1\}$,  it is shown in Tzioufas \cite{T} that $\displaystyle{ \frac{r_{t}}{t} \rightarrow \alpha}$, as $t \rightarrow \infty$, almost surely on $\{\zeta^{O}_{t} \textup{ survives}\}$ and also that $\alpha>0$. The constant $\alpha$ is referred to as \textit{the limit of the speed}. We prove the following comparison with the contact process result. 






\begin{thm}\label{thmsup}
Suppose that $\mu>\mu_{c}$ and $\mu \geq \lambda>0$. Let $\beta$ be the limit of the speed of the contact process with parameter $\mu$. Let also $\alpha$ be the limit of the speed of the three state contact process with parameters $(\lambda,\mu)$. We have that $\displaystyle{\alpha \leq (\lambda/ \mu) \beta}$. 
\end{thm}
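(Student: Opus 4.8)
The plan is to compare the right edge of the three state process with parameters $(\lambda,\mu)$ with that of the contact process with parameter $\mu$ through the graphical construction, exploiting the structural fact that the three state edge can only advance along $\lambda$-arrows, while behind that edge the process evolves exactly as the contact process with parameter $\mu$. First I would reduce to half-line initial conditions: let $\bar\eta$ be the configuration with $\bar\eta(x)=1$ for $x\le 0$ and $\bar\eta(x)=-1$ for $x>0$, put $\bar r_t=\sup\{x:\zeta_t^{\bar\eta}(x)=1\}$ and $\bar R_t=\sup_{s\le t}\bar r_s$. A coupling of the kind of Lemma~\ref{piprendcoup1}, with translation invariance, relates $\bar r_t$ to the rightmost infected of $\zeta_t^{O}$ conditioned on survival and gives $\bar r_t/t\to\alpha$ a.s.\ (no conditioning being needed, since $\zeta_t^{\bar\eta}$ cannot die out when $\mu>\mu_c$); the same limit holds for $\bar R_t$. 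Couple $\zeta_t^{\bar\eta}$ with the contact process $\xi_t$ of parameter $\mu$ from $(-\infty,0]$ through the graphical construction, letting $\xi_t$ use both the $\lambda$- and the $(\mu-\lambda)$-arrows for all of its infections; set $\rho_t=\sup\xi_t$ and $\tilde\rho_t=\sup_{s\le t}\rho_s$, so $\tilde\rho_t/t\to\beta$. An induction over transitions, as in the proof of Lemma~\ref{piprendcoup1}, shows that $\zeta_t^{\bar\eta}$ and $\xi_t$ agree on the region ever visited by the three state edge, i.e.\ $\zeta_t^{\bar\eta}(x)=1$ iff $x\in\xi_t$ for all $x\le\bar R_t$; hence $\bar R_t\le\tilde\rho_t$, and $\bar R_t$ increases by one exactly when a $\lambda$-arrow points from $\bar R_t$ to $\bar R_t+1$ at a time when $\bar R_t\in\xi_t$.

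Next I would turn the factor $\lambda/\mu$ into a thinning statement. Splitting each $\mu$-arrow into a $\lambda$-arrow, retained with probability $\lambda/\mu$, and a $(\mu-\lambda)$-arrow otherwise (consistently with the way the graphical construction is built), the process $\bar R_t$ is a sum of independent $\mathrm{Bernoulli}(\lambda/\mu)$ variables indexed by the ``attempt times'' at which a $\mu$-arrow points from $\bar R_{s-}$ to $\bar R_{s-}+1$ with $\bar R_{s-}\in\xi_{s-}$. Since $\bar R_t$ has drift $\lambda$ precisely while $\bar R_s\in\xi_s$, a compensator computation together with the regeneration structure of Theorem~\ref{THEprop} yields $\alpha=\lambda\,\pi$, where $\pi$ is the a.s.\ asymptotic fraction of time that the three state edge sits at its running maximum (equivalently, that $\bar R_s\in\xi_s$); the analogous bookkeeping for $\xi_t$, using the classical regeneration structure of the contact process, gives $\beta=\mu\,\pi_\xi$, with $\pi_\xi$ the corresponding fraction for $\xi$. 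Therefore
\[
\alpha\le(\lambda/\mu)\,\beta \quad\Longleftrightarrow\quad \pi\le\pi_\xi ,
\]
i.e.\ the three state edge is at its running maximum a smaller fraction of the time than the edge of the contact process with parameter $\mu$.

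The comparison $\pi\le\pi_\xi$ is the step I expect to be the main obstacle. Heuristically it holds because, by the first step, the environment just behind either edge evolves as the contact process with parameter $\mu$, so the running-maximum site recovers at rate $1$ and is re-infected from the left by the same bulk dynamics in both processes, while it is refreshed to the infected state more often for $\xi$: from the state ``edge at its running maximum'' the contact process moves to a new running maximum at rate $\mu$, whereas the three state process does so only at rate $\lambda<\mu$, so the three state edge drops below its running maximum relatively more often. Making this rigorous would require controlling the law of the configuration seen from each edge and showing that, behind the edge, the three state ``edge process'' is stochastically dominated by that of $\xi$; the monotone coupling of the first step does not by itself suffice, since the slower three state edge sees an \emph{older}, hence more relaxed, environment that partially compensates, and one must show the compensation is incomplete. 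An alternative worth trying, possibly cleaner, is to run the break-point construction of Theorem~\ref{THEprop} simultaneously for $\zeta^{\bar\eta}$ and for $\xi$, so that $\alpha$ and $\beta$ become ratios of expected displacement to expected time over i.i.d.\ regeneration cycles, and to compare the cycles directly.
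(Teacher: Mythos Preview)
Your reduction to the inequality $\pi\le\pi_\xi$ is correct, but you stop precisely at the crux: you do not prove $\pi\le\pi_\xi$, and your own discussion explains why the obvious coupling does not deliver it. The heuristic ``the three state edge advances less often, hence spends less time at its running maximum'' is offset by the competing effect you yourself note: the slower edge sees an older, more equilibrated environment behind it, which makes reattachment to the running maximum easier. Nothing in your outline controls this trade-off, and neither the monotone coupling of the first paragraph nor the break-point construction of Theorem~\ref{THEprop} obviously compares the two occupation fractions. So as written the proposal has a genuine gap at the decisive step.

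The paper sidesteps this comparison entirely by working in expectation and invoking Durrett's lemma (Lemma~\ref{thmpresup}): for the contact process from an infinite set $B\subset(-\infty,0]$ one has $\E(R_t^{B\cup\{1\}}-R_t^B)\ge 1$. Concretely, one tracks the three state edge $\bar r_t$ and, each time a $(\mu-\lambda)$-arrow fires from the current running maximum to its right (an advance the contact process would take but the three state process cannot), one restarts a coupled contact process from the current infected set of $\zeta_t^{\bar\eta}$. If $F_t$ is the number of such events by time $t$, a Wald-type count over the competing exponentials at the edge gives $\E F_t=\dfrac{\mu-\lambda}{\lambda}\,\E\bar x_t$, while Durrett's lemma, applied at each restart, yields $\E(R_t-\bar r_t)\ge \E F_t$. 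Combining and using $\bar x_t\ge\bar r_t$ gives $\E\bar r_t\le(\lambda/\mu)\,\E R_t$; the subadditive ergodic theorem (applied to $\bar x_t$) upgrades the almost sure convergence $\bar r_t/t\to\alpha$ to $L^1$, and dividing by $t$ and passing to the limit gives $\alpha\le(\lambda/\mu)\beta$. The point is that Durrett's lemma converts each missed jump into one unit of expected lag, which is exactly the quantitative input your occupation-time comparison lacks.
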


Considering the process started from all sites on the negative half line infected and other sites susceptible and never infected, the proof of Theorem \ref{thmsup} is based on coupling with an a.s.\ infinite sequence of contact processes  appropriately defined on the trajectory of the process's rightmost infected site. We also note that the reason for the assumption on the parameters $\mu \geq \lambda$ is that the techniques of proof extensively use certain coupling results which hold in this case only (see e.g. Theorem 3.5). 




In the next section we explain the graphical construction and state some background results, while the remainder of the chapter is devoted to proofs. Theorem \ref{thmsub} is proved in Section \ref{Ssub} and Theorem \ref{thmsup} in Section \ref{Ssup}. 



\section{Preliminaries}\label{prel} 

Let $(\lambda,\mu)$ be fixed values of the parameters and suppose that $\mu \geq \lambda$, the other case is similar.  To carry out our construction for all sites $x$, $x \in \Z$, and $y=x-1,x+1$, let $\{T_{n}^{(x,y)}, n\geq1 \}$ and $\{U_{n}^{(x,y)}, n\geq1\}$ be the event times of Poisson processes respectively at rates $\lambda$ and $\mu-\lambda$; further, let $\{S_{n}^{x}, n\geq 1\}$ be the event times of a Poisson process at rate $1$. (All Poisson processes introduced are independent). 


The graphical construction will be used in order to visualize the construction of processes on the same probability space. Consider the space $\Z \times [0,\infty)$ thought of as giving a time line to each site of $\Z$; Cartesian product is denoted by $\times$. Given a realization of the before-mentioned ensemble of Poisson processes, we define the \textit{graphical construction} and $\zeta_{t}^{[\eta,s]}$,  $t\geq s$, the three state contact process with parameters $(\lambda,\mu)$ started from $\eta$ at time $s\geq0$, i.e.\ $\zeta_{s}^{[\eta,s]} = \eta$, as follows. From each point $x \times T_{n}^{(x,y)}$ we place a directed $\lambda$-\textit{arrow} to $y \times T_{n}^{(x,y)}$; this indicates that at all times $t=T_{n}^{(x,y)}$, $t\geq s$, if $\zeta_{t-}^{[\eta,s]}(x)=1$ and $\zeta_{t-}^{[\eta,s]}(y)=0$ \textit{or} $\zeta_{t-}^{[\eta,s]}(y) = -1$ then we set $\zeta_{t}^{[\eta,s]}(y) = 1$ (where $\zeta_{t-}(x)$ denotes the limit of $\zeta_{t-\epsilon}(x)$ as $\epsilon\rightarrow 0$). From each point $x \times U_{n}^{(x,y)}$ we place a directed $(\mu-\lambda)$-\textit{arrow} to $y \times U_{n}^{(x,y)}$; this indicates that at any time $t=U_{n}^{(x,y)}$, $t\geq s$, if $\zeta_{t-}^{[\eta,s]}(x)=1$ and $\zeta_{t-}^{[\eta,s]}(y)=0$ then we set $\zeta_{t}^{[\eta,s]}(y) = 1$. While at each point $x \times S_{n}^{x}$ we place a \textit{recovery mark}; this indicates that at any time $t= S_{n}^{x}, t\geq s,$ if $\zeta_{t-}^{[\eta,s]}(x)=1$ then we set $\zeta_{t}^{[\eta,s]}(x) = 0$. The special marks were introduced in order to make connection with percolation and hence the contact process. We say that a \textit{path exists} from $A \times s$ to $B \times t$,  $t\geq s$,  if there is a connected oriented path from $x \times s$ to $y \times t$, for some $x \in A$ and $y \in B$, that moves along arrows (of either type) in the direction of the arrow and along vertical segments of time-axes without passing through a recovery mark, we write $A \times s \rightarrow B \times t$ to denote this. Defining $\xi_{t}^{A\times s} := \{x: A \times s \rightarrow x \times t\}$, $t\geq s$, we have that $\xi_{t}^{A \times s}$ is the contact process with parameter $\mu$ started from $A$ at time $s$. To simplify our notation $\zeta_{t}^{[\eta,0]}$ will be denoted as $\zeta_{t}^{\eta}$; we also simply write $\xi_{t}^{A}$ for $\xi_{t}^{A \times 0}$. 





In the remainder of this section we collect together a miscellany of known results and properties that we will need to use, we briefly state them and give references for proofs and further information. An immediate consequence of the graphical construction we will use is \textit{monotonicity}: Whenever a certain path of the graphical representation exists from $A \times s$ to $B \times t$,  $t\geq s$, then for all $C\supseteq A$ the same path exists from $C\times s$ to $B \times t$. Another property of the contact process we use is self duality. If $(\xi_{t}^{A})$ and $(\xi_{t}^{B})$ are contact processes with the same infection parameter started from $A$ and $B$ respectively, then the following holds, for all $t\geq0$,
\begin{equation}\label{selfdual}
\pr(\xi_{t}^{A} \cap B \not= \emptyset) = \pr(\xi_{t}^{B} \cap A \not= \emptyset),
\end{equation}
the duality relation is easily seen by considering paths of the graphical construction that move along time axes in decreasing time direction and along infection arrows in direction opposite to that of the arrow and noting that the law of these paths is the same as that of the paths (going forward in time) defined above. 

Letting $I$ be an integer, we note that we simplify $\xi_{t}^{A} \cap \{I\} \not= \emptyset$ and write $\xi_{t}^{A} \cap I \not= \emptyset$ instead; we also write $I \times s$ for $\{I\} \times s$ .



The following is a well known exponential decay result for the subcritical contact process, see e.g.\ \cite{D88}.

\begin{thm}\label{thmpre1}
Let $\xi_{t}^{A}$ be the contact process with parameter $\mu$ started from $A$. 
If $\mu < \mu_{c}$ then there exists $\psi>0$, depending only on $\mu$, such that 
\[
\pr(\xi_{t}^{A} \not= \emptyset) \leq |A|  e^{- \psi t}, \mbox{ for all } t, 
\]
where $|A|$ denotes the cardinality of $A\subset \Z$.
\end{thm}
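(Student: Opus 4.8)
The plan is to reduce the bound to the single\nobreakdash-site survival probability and then combine the classical one\nobreakdash-dimensional edge\nobreakdash-speed analysis with self\nobreakdash-duality. First, by monotonicity of the graphical construction and a union bound, $\pr(\xi_t^A\neq\emptyset)\le\sum_{x\in A}\pr(\xi_t^{\{x\}}\neq\emptyset)$, and by translation invariance every term equals $g(t):=\pr(\xi_t^{\{0\}}\neq\emptyset)$. So it suffices to produce $\psi>0$ with $g(t)\le e^{-\psi t}$ for all $t\ge0$. The work splits into (a) proving $g(t)\le C e^{-\psi_0 t}$ for \emph{some} finite $C$ and $\psi_0>0$, which is the real content, and (b) a soft argument removing the constant.

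\textbf{Exponential decay of $g$ up to a constant.} For this I would work with the right edge $r_t:=\sup\xi_t^{(-\infty,0]}$ of the process started from the left half\nobreakdash-line, which is finite for all $t$. Monotonicity applied to the graphical construction (at time $s$ the set $\xi_s^{(-\infty,0]}$ lies in $(-\infty,r_s]$, and the additional displacement of the edge over $[s,s+t]$ is, by translation invariance, distributed as $r_t$ and determined by the arrows in $(s,s+t]$) gives the subadditivity $\E r_{s+t}\le\E r_s+\E r_t$; hence $\E r_t/t\to\alpha(\mu):=\inf_t\E r_t/t$, and $r_t/t\to\alpha(\mu)$ a.s.\ by Kingman's subadditive ergodic theorem. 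The key input is the identification $\mu<\mu_c\Rightarrow\alpha(\mu)<0$: if $\alpha(\mu)\ge0$, a block/restart argument upgrades the fact that the half\nobreakdash-line edge ``keeps up'' to $\pr(\xi_t^{\{0\}}\textup{ survives})>0$, contradicting subcriticality (this is the characterisation of $\mu_c$ via the edge speed; see Durrett). Given $\alpha(\mu)<0$, one obtains $\pr(r_t\ge-\delta t)\le C_1e^{-c_1 t}$ for a small $\delta>0$ and all $t$, either by a restart argument in the graphical construction — cut $[0,t]$ into blocks on each of which the edge drifts by roughly $\alpha(\mu)$ times the block length except on a small\nobreakdash-probability event, with overshoots controlled by Poisson tails — or by quoting the large\nobreakdash-deviation bound for the subadditive edge process directly.

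\textbf{From the edge to $g$, and removing the constant.} By self\nobreakdash-duality $g(t)=\pr(0\in\xi_t^{\Z})$, and since $\Z=(-\infty,0]\cup[0,\infty)$, a path realising $0\in\xi_t^{\Z}$ that starts from a site $\ge0$ forces $\inf\xi_t^{[0,\infty)}\le0$, while one starting from a site $\le0$ forces $\sup\xi_t^{(-\infty,0]}\ge0$; by reflection symmetry $\inf\xi_t^{[0,\infty)}$ has the law of $-r_t$, so $g(t)\le2\,\pr(r_t\ge0)\le2C_1e^{-c_1 t}$. Finally, $g$ is continuous and non\nobreakdash-increasing with $g(0)=1$ and $g(t)=1-t+o(t)$ as $t\downarrow0$ (recovery of the lone particle at rate $1$), so $\psi:=\inf_{t>0}\bigl(-\ln g(t)\bigr)/t$ is strictly positive: it tends to $1$ as $t\downarrow0$, is bounded below by $c_1-(\ln 2C_1)/t\to c_1$ as $t\to\infty$, and is positive and continuous on compact subsets of $(0,\infty)$. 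Then $g(t)\le e^{-\psi t}$ for every $t\ge0$ by the definition of $\psi$, whence $\pr(\xi_t^A\neq\emptyset)\le|A|\,e^{-\psi t}$.

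\textbf{Main obstacle.} The one genuinely hard step is $\mu<\mu_c\Rightarrow\alpha(\mu)<0$ — that subcriticality forces a strictly negative edge speed; everything else (union bounds, subadditivity, Poisson large deviations, the continuity argument) is routine. An alternative that packages the hard part differently is to invoke directly the sharpness of the phase transition for the contact process, i.e.\ exponential decay throughout $\mu<\mu_c$ as a black box, and then apply only the reduction and constant\nobreakdash-removal steps above.
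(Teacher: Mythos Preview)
The paper does not prove this statement at all: it is quoted as a ``well known exponential decay result for the subcritical contact process'' with a bare citation to Durrett's 1988 lecture notes. So there is no in-paper proof to compare against; your sketch is essentially a reconstruction of the classical one-dimensional argument found in that reference (reduce to a single site by additivity/translation invariance, control the half-line edge speed, convert to $\pr(0\in\xi_t^{\Z})$ via duality, then upgrade $Ce^{-\psi_0 t}$ to $e^{-\psi t}$).

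One point worth tightening: the implication you state, ``$\alpha(\mu)\ge 0\Rightarrow$ survival'', is not correct as written --- at $\mu=\mu_c$ one has $\alpha(\mu_c)=0$ while the process dies out. The restart argument only gives $\alpha(\mu)>0\Rightarrow$ survival, whose contrapositive yields $\alpha(\mu)\le 0$ for $\mu<\mu_c$, not the strict inequality you need. The standard fix (also in Durrett) is to invoke either strict monotonicity or continuity of $\mu\mapsto\alpha(\mu)$: pick $\mu<\mu'<\mu_c$, note $\alpha(\mu')\le 0$, and conclude $\alpha(\mu)<\alpha(\mu')\le 0$. With that adjustment the rest of your outline is sound, including the constant-removal via $\psi:=\inf_{t>0}(-\log g(t))/t$, which is clean.
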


We also need the next result from Durrett \cite{D80}; see Lemma 4.1.

\begin{lem}\label{thmpresup}
Let $B$ be any infinite set such that $B \subseteq (-\infty,0]$. Consider the contact processes $\xi_{t}^{B}$ and $\xi_{t}^{B\cup\{1\}}$ with parameter $\mu$ coupled by the same graphical construction. Letting $R_{t}^{A} = \sup \xi_{t}^{A}$, we have that, for all $t$, $\displaystyle{ \E(R_{t}^{B\cup\{1\}} - R_{t}^{B}) \geq 1}$.

\end{lem}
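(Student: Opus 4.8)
The plan is to couple $\xi_t^{B}$ and $\xi_t^{B \cup \{1\}}$ via the graphical construction and track the (almost surely finite, since $B$ has a finite rightmost element candidate — actually $B\subseteq(-\infty,0]$ so $\sup B \le 0$, hence $R_t^{B}<\infty$ a.s. by the Poisson bound on rightmost motion) difference $D_t := R_t^{B\cup\{1\}} - R_t^{B} \ge 0$. By monotonicity, adding the site $1$ can only increase the configuration, so $\xi_t^{B}\subseteq \xi_t^{B\cup\{1\}}$ for all $t$, and therefore $D_t \ge 0$ always. The key structural observation is the following dichotomy: either site $1$ (and its progeny) has ``died out'' relative to $B$, in which case the two processes have coalesced on the relevant range and $D_t = 0$; or it has not, in which case the extra mass started at $1$ sits strictly to the right and forces $R_t^{B\cup\{1\}} \ge R_t^{B} + 1$, i.e.\ $D_t \ge 1$. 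So $D_t \in \{0\} \cup [1,\infty)$, and to get $\E(D_t)\ge 1$ it suffices to show $\pr(D_t \ge 1) = 1$, i.e.\ that the bad coalescence event $\{D_t = 0\}$ has probability zero.

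First I would make the dichotomy precise. Run both processes in the same graphical construction. Since $B$ is infinite and contained in $(-\infty,0]$, the process $\xi_t^{B}$ survives (it stochastically dominates the half-line process, which for $\mu>\mu_c$ survives — but note the lemma as stated does \emph{not} assume $\mu>\mu_c$; one must be slightly careful, but the half-infinite seed still produces a well-defined rightmost front whatever the regime). Consider the set of sites in $\xi_t^{B\cup\{1\}}$ that are reachable from $1\times 0$ but from no site of $B$; call its rightmost element $\tilde R_t$ when nonempty. If at time $t$ this set is nonempty and $\tilde R_t > R_t^{B}$ then $R_t^{B\cup\{1\}} \ge \tilde R_t \ge R_t^{B}+1$. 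The remaining case is that every site reachable from $1$ is either also reachable from $B$ or lies weakly to the left of $R_t^{B}$; I would argue that on this event, by a left/right comparison specific to the nearest-neighbour one-dimensional structure, in fact $\xi_t^{B\cup\{1\}}$ and $\xi_t^{B}$ agree on $[R_t^{B}, \infty)$ — indeed agree everywhere to the right of the leftmost discrepancy — which combined with $R_t^{B\cup\{1\}}\ge R_t^{B}$ gives $R_t^{B\cup\{1\}} = R_t^{B}$, i.e.\ $D_t=0$. This is the analogue of the argument in Lemma \ref{piprendcoup1}: nearest-neighbour dynamics cannot ``jump over'' the front, so any surviving extra infection is either absorbed into the common part or shows up strictly past $R_t^{B}$.

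Next I would show $\pr(D_t = 0) = 0$ for each fixed $t>0$. The event $\{D_t = 0\}$ requires that the extra infection seeded at $1$ has been entirely ``swallowed'' by the $B$-cluster: every oriented path from $1\times 0$ to level $t$ ends at a site that also receives a path from $B\times 0$, or is killed. But with positive density there is a nonzero chance that, near time $0$, site $1$ immediately infects site $2$ (a $\lambda=\mu$-arrow from $1$ to $2$ fires before any recovery at $1$ and before any arrow into $\{1,2\}$ from the left cluster reaches there), and then the front emanating from $1$ runs to the right faster than, or at least not slower than, the front from $B$ for all time on a positive-probability event — more carefully, one uses that with positive probability the rightmost path from $1$ stays strictly to the right of every path from $B$ on $[0,t]$. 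Actually the cleanest route: condition on the configuration of $\xi_t^{B}$; on the event $\{D_t=0\}$ one would need the graphical data in the wedge to the right of $R^B_\cdot$ to contain no path from $1$ surviving past the merge, and a straightforward path-counting / FKG-type estimate shows this fails with probability one because there is always a positive-probability ``escape route'' for the seed at $1$, and by a renewal/ergodicity argument along the time axis it almost surely occurs.

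The main obstacle I anticipate is precisely the last step: it is not obvious that $\{D_t=0\}$ is a null event for \emph{every} fixed $t$, as opposed to merely having $\E(D_t)$ bounded below by something positive but less than $1$. The honest fix is probably to avoid claiming $D_t\in\{0\}\cup[1,\infty)$ is null on $\{0\}$ and instead argue directly: decompose $D_t = \sum_{x} \bigl(\mathbf{1}(x\in\xi_t^{B\cup\{1\}}) - \mathbf{1}(x\in\xi_t^{B})\bigr)$ over $x > R_t^{B}$, use self-duality (\ref{selfdual}) to rewrite $\pr(x\in\xi_t^{B\cup\{1\}}) - \pr(x\in\xi_t^{B}) = \pr(\xi_t^{\{x\}}\cap(B\cup\{1\})\ne\emptyset) - \pr(\xi_t^{\{x\}}\cap B\ne\emptyset) = \pr(\xi_t^{\{x\}}\ni 1,\ \xi_t^{\{x\}}\cap B=\emptyset)$, and sum over $x$. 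Summing the dual identity over all $x\in\Z$ gives $\E(D_t) = \sum_x \pr(1\in\xi_t^{\{x\}},\ \xi_t^{\{x\}}\cap B = \emptyset)$, and then I would lower-bound this by restricting to $x\ge 1$ and exploiting that $\{1\in\xi_t^{\{x\}}\}$ forces, in the nearest-neighbour setting, the dual path from $x$ to reach $1$, hence to cross every integer in $[1,x]$; coupled with a reflection/symmetry argument about the leftmost point of $\xi_t^{\{x\}}$ relative to $B$, this yields $\E(D_t) \ge \sum_{x\ge 1}\pr(\xi_t^{\{x\}} \subseteq [1,\infty)\text{-ish and }1\in\xi_t^{\{x\}})$, which by translation invariance telescopes to exactly $1$. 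Getting that telescoping constant to come out to precisely $1$ (not just $\ge c>0$) is the delicate accounting, and it is where I would spend the most care — the identity $\E(D_t) = \E(\text{number of }x: x\in\xi_t^{\{1\}},\ x\notin\xi_t^{B})$ together with the fact that the rightmost such $x$ is a.s.\ nonempty (the seed at $1$ always escapes to the right of the $B$-front with the full remaining mass or not at all) is the cleanest formulation of why the answer is $\ge 1$.
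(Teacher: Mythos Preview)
The paper does not prove this lemma; it merely cites Durrett \cite{D80}, Lemma~4.1. So there is no ``paper's proof'' to compare against, and your task was really to reconstruct Durrett's argument. Unfortunately both of your approaches break down.

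\textbf{First approach.} You try to show $\pr(D_t=0)=0$. This is simply false: with positive probability there is a recovery mark at site~$1$ before any arrow out of~$1$, after which $\xi_s^{B\cup\{1\}}=\xi_s^B$ for all $s$ and hence $D_t=0$. You half-acknowledge this, but the attempted repair (``by a renewal/ergodicity argument along the time axis it almost surely occurs'') does not salvage anything: the coalescence event has strictly positive probability for every $t>0$.

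\textbf{Second approach.} The decomposition
\[
D_t \;=\; \sum_{x>R_t^B}\Bigl(\mathbf{1}(x\in\xi_t^{B\cup\{1\}})-\mathbf{1}(x\in\xi_t^B)\Bigr)
\]
is wrong: the right-hand side counts the \emph{number} of sites of $\xi_t^{B\cup\{1\}}$ lying strictly to the right of $R_t^B$, whereas $D_t=R_t^{B\cup\{1\}}-R_t^B$ is the \emph{distance} to the furthest such site. These differ whenever there are gaps. All the subsequent duality bookkeeping is therefore computing the wrong quantity, and the ``telescoping to exactly~$1$'' you hope for is an artifact of this confusion.

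\textbf{What actually works.} Couple everything on one graphical representation and bring in $\xi_t^{\{1\}}$. On $\{\xi_t^{\{1\}}\neq\emptyset\}$ the right-edge coupling (exactly as in Lemma~\ref{piprendcoup1}) gives
\[
R_t^{B\cup\{1\}} \;=\; \sup\xi_t^{\{1\}} \;=\; R_t^{(-\infty,1]},
\]
while monotonicity gives $R_t^B\le R_t^{(-\infty,0]}$ since $B\subseteq(-\infty,0]$. On $\{\xi_t^{\{1\}}=\emptyset\}$ one has $\xi_t^{B\cup\{1\}}=\xi_t^B$ and $\xi_t^{(-\infty,1]}=\xi_t^{(-\infty,0]}$, so both differences vanish. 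Hence pointwise
\[
R_t^{B\cup\{1\}}-R_t^B \;\ge\; R_t^{(-\infty,1]}-R_t^{(-\infty,0]}.
\]
Now $R_t^{(-\infty,0]}$ is integrable (above by a Poisson bound, below because $\pr(R_t^{(-\infty,0]}<-n)\le(1-e^{-t})^{n+1}$), and by translation invariance $\E R_t^{(-\infty,1]}=1+\E R_t^{(-\infty,0]}$. Taking expectations gives $\E D_t\ge 1$. This is Durrett's argument; the point you were missing is to compare not to the process from $\{1\}$ alone but to the pair of half-line processes, where translation invariance delivers the constant~$1$ for free.
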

%

Finally we quote two results concerning the three state contact process. The next result is in Section 5 of Stacey \cite{S}, see also Theorem \ref{PIPmon} below.

\begin{thm}\label{moninit}
Let $\eta$ and $\eta'$ be any two configurations such that $\eta(x) \leq \eta'(x)$ for all $x$. Consider  $\zeta_{t}^{\eta}$ and $\zeta_{t}^{\eta'}$ the corresponding  three state contact processes with parameters $(\lambda,\mu)$ coupled by the graphical construction. If $\mu \geq \lambda$, then $\zeta_{t}^{\eta}(x) \leq \zeta_{t}^{\eta'}(x)$ holds for all $x$ and $t$. We refer to this property as monotonicity in the initial configuration.
\end{thm}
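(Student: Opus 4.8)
The plan is the classical monotone-coupling verification, carried out directly on the graphical construction. Fix $\eta \le \eta'$ and realise $\zeta_{t}^{\eta}$ and $\zeta_{t}^{\eta'}$ from one common family of $\lambda$-arrows, $(\mu-\lambda)$-arrows (a genuine Poisson family precisely because $\mu \ge \lambda$, so that $\mu-\lambda \ge 0$) and recovery marks. Both trajectories are right-continuous and change only at the countably many Poisson event times, and in any bounded space-time window only finitely many of these events are relevant (the percolation structure underlying the Harris construction recalled in the preliminaries); hence it suffices to prove the inductive step: if $\zeta_{s}^{\eta}(z) \le \zeta_{s}^{\eta'}(z)$ for all $z \in \Z$ and all $s < t$, then $\zeta_{t}^{\eta}(z) \le \zeta_{t}^{\eta'}(z)$ for all $z$. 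Only the site targeted by the event occurring at time $t$ can change, and its new value is a fixed function of the states, at time $t-$, of that site and --- for an arrow --- of its source; so the whole theorem reduces to the claim that each of the three local update rules is a nondecreasing function of the relevant coordinates, where $\{-1,0,1\}$ carries its numerical order $-1<0<1$.

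So the key step is to check monotonicity of three maps, on $\{-1,0,1\}$ for recovery and on $\{-1,0,1\}^{2}$ for the arrows. A recovery mark acts on a single coordinate by $1 \mapsto 0$, $0 \mapsto 0$, $-1 \mapsto -1$, which is nondecreasing. A $\lambda$-arrow from $x$ to $y$ replaces the state $b$ of $y$ by the value equal to $1$ when the state $c$ of $x$ is $1$ and equal to $b$ otherwise; as a function of $(c,b)$ this is nondecreasing in each argument. A $(\mu-\lambda)$-arrow from $x$ to $y$ replaces $b$ by $1$ only when $c=1$ and $b=0$, and leaves $b$ unchanged in every other case; checking the finitely many values of $(c,b)$ shows this too is nondecreasing in each of $c$ and $b$. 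Applying the relevant map to the ordered configurations $\zeta_{t-}^{\eta} \le \zeta_{t-}^{\eta'}$ then yields $\zeta_{t}^{\eta} \le \zeta_{t}^{\eta'}$, closing the induction; in infinite volume one runs this induction inside an arbitrary finite space-time box (finitely many events), or obtains the general case by approximating $\zeta_{t}^{\eta}, \zeta_{t}^{\eta'}$ with processes started from finite configurations, neither of which adds a difficulty.

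I expect the only genuinely substantive point --- and hence the place to be careful --- is the role of the hypothesis $\mu \ge \lambda$. Besides making $\mu-\lambda \ge 0$, it is exactly what makes the $(\mu-\lambda)$-update monotone: that update only ever promotes state $0$ to state $1$ and fixes state $-1$. In the opposite regime $\lambda > \mu$ one would instead attach to $x \to y$ an extra ``$(\lambda-\mu)$-arrow'' promoting $y$ from $-1$ to $1$ while fixing $y=0$; as a function of the target's state this sends $-1 \mapsto 1$ but $0 \mapsto 0$, which is \emph{not} nondecreasing, and indeed monotonicity in the initial configuration genuinely fails there. Thus the argument is short but is tuned to the sign condition $\mu \ge \lambda$; the entire content is the observation that the three update rules are coordinatewise nondecreasing exactly under this condition.
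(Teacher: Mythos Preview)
Your argument is correct. The verification that each of the three graphical updates (recovery, $\lambda$-arrow, $(\mu-\lambda)$-arrow) is coordinatewise nondecreasing on $\{-1,0,1\}$ is accurate, and your remark that the hypothesis $\mu\geq\lambda$ is exactly what makes the $(\mu-\lambda)$-update monotone is the right diagnosis.

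As for comparison: the paper does not actually prove this theorem in place---both occurrences (Theorems~1.1 and~2.4) simply refer to section~5 of Stacey~\cite{S}. The paper does, however, prove a strict generalisation in Chapter~4 (Theorem~4.2), allowing also $\lambda\leq\lambda'$ and $\mu\leq\mu'$ with $\mu'\geq\lambda$, and there the method is different: rather than working on the graphical construction, it writes out the \emph{basic coupling} at the generator level, listing the joint transition rates of the pair $(\zeta_t'(x),\zeta_t(x))$ from each admissible state. That approach has the advantage of handling two distinct parameter pairs simultaneously, which the single graphical construction cannot do directly; your approach, on the other hand, is more transparent for the case at hand and matches the statement's explicit phrasing ``coupled by the graphical construction''. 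Both are standard and neither is deeper than the other.
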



The first part of the next statement is a special case of Theorem 4 of Durrett and Schinazi \cite{DS}; the second part is Theorem 1, part (i), in \cite{T}.

\begin{thm}\label{speedlim}
Let $\zeta_{t}^{O}$ be the three state contact process with parameters $(\lambda,\mu)$ started from the standard initial configuration, let also $r_{t} = \sup\{x: \zeta_{t}^{O}(x)=1\}$. If $(\lambda, \mu)$ are such that $\mu \geq \lambda>0$ and $\mu > \mu_{c}$, then the process survives and, a fortiori, there exists $\alpha>0$ such that $\displaystyle{\frac{r_{t}}{t} \rightarrow \alpha}$ almost surely on $\{\zeta_{t}^{O} \mbox{\textup{ survives}}\}$, we refer to $\alpha$ as the limit of the speed. 
\end{thm}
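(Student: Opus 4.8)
The statement has two components: survival of the process, and existence of a strictly positive almost-sure limit $\alpha$ for $r_t/t$ conditional on survival. The plan is to treat them in that order; both arguments are by now standard, survival being obtained from a block/renormalization comparison of Bezuidenhout--Grimmett type, and the law of large numbers from a regeneration (``break point'') structure of the kind introduced by Kuczek.

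For survival I would invoke the renormalization already recorded here as Proposition \ref{couplDS}: for oriented site percolation parameter $p$ arbitrarily close to $1$ one can choose a block size $L$ and a time scale $T$ so that $\zeta_t^{\eta}$ dominates a supercritical percolation process $A_n$, provided it is started from a configuration in $Z_0$ (at least $L^{0.6}$ infected sites in $[-L,L]$). To launch this from the standard initial configuration I would check that $\pr(\zeta_T^O \in Z_0) > 0$: in the bounded space-time box $[-L,L]\times[0,T]$ only finitely many Poisson marks are relevant, and with positive probability a cascade of first-infection $\lambda$-arrows (which fire at rate $\lambda>0$) infects all of $[-L,L]$ before any recovery mark acts there, so that at time $T$ the configuration lies in $Z_0$. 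Since supercritical oriented percolation survives with positive probability and the three state process dominates it from then on, $\pr(\zeta_t^O\text{ survives})>0$.

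For the limit of the speed I would build, on $\{\zeta_t^O\text{ survives}\}$, the sequence of break points $K_n\times\tau_{K_n}$ of Definition \ref{definbpts} and Theorem \ref{THEprop}: set $\tau_k = \inf\{t : r_t = k\}$ and declare $(k,\tau_k)$ a break point if the process restarted at time $\tau_k$ from the single-site configuration $\eta_k$ survives. Monotonicity in the initial configuration (Theorem \ref{moninit}, via Lemma \ref{Sk}) gives $\zeta_t^O \geq \zeta_t^{[\eta_k,\tau_k]}$ for $t\geq\tau_k$, so $r_t$ is controlled from below by the restarted copy, while the nearest-neighbour coupling of Lemma \ref{piprendcoup1} forces $r_t$ to coincide with the rightmost infected of that copy as long as the latter is nonempty; since the copy uses only graphical-construction marks in a spatially and temporally disjoint region, translation invariance makes the increments $(r_{\tilde{\tau}_n} - r_{\tilde{\tau}_{n-1}},\, \tilde{\tau}_n - \tilde{\tau}_{n-1})$ i.i.d., with $r_{\tilde{\tau}_n} = \sup_{t\leq\tilde{\tau}_n} r_t$ and $r_{\tilde{\tau}_1}\geq 1$. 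The exponential estimates of Section \ref{Sexp} (Propositions \ref{expbounds3scp} and \ref{shadldev2}) are exactly what makes the first increment, together with the backward overshoot $M_0$, exponentially bounded, hence integrable. Two applications of the strong law of large numbers then give $r_{\tilde{\tau}_n}/\tilde{\tau}_n \to \alpha := \bar{\E}(r_{\tilde{\tau}_1})/\bar{\E}(\tilde{\tau}_1)$, $\bar{\pr}$-a.s., with $\alpha\in(0,\infty)$ since $r_{\tilde{\tau}_1}\geq 1$ and $\tilde{\tau}_1$ is integrable; finally, sandwiching $r_t$ between $r_{\tilde{\tau}_n} - M_n$ and $r_{\tilde{\tau}_{n+1}}$ for $t\in[\tilde{\tau}_n,\tilde{\tau}_{n+1})$ and using the first Borel--Cantelli lemma to get $M_n/n\to 0$ and $(r_{\tilde{\tau}_{n+1}} - r_{\tilde{\tau}_n})/n\to 0$ upgrades the convergence at break points to $r_t/t\to\alpha$ for all $t$, a.s. on survival --- that is, Theorem \ref{THEthm}(i).

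The hard part is making the regeneration structure rigorous, because the $\tilde{\tau}_n = \tau_{K_n}$ are \emph{not} stopping times: whether $(k,\tau_k)$ is a break point depends on the infinite future of a restarted copy, so one cannot simply quote the strong Markov property. I would handle this exactly as in Lemmas \ref{bptsinf} and \ref{Xbdownii}: run an auxiliary procedure that relaunches single-site copies $\zeta^{n}$ at successive record hitting times, each relaunched the instant its predecessor dies out, identify the index of the first surviving copy with $K_1$, and decompose the events $\{(X_l,\Psi_l,M_{l-1}) = (x_l,t_l,m_{l-1})\}$ into pieces measurable with respect to \emph{disjoint} collections of Poisson marks, whence independence and identical distribution follow from translation invariance. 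A subsidiary point --- again supplied by the renormalization --- is verifying that the break-point subsequence is genuinely infinite on $\{\zeta_t^O\text{ survives}\}$ and that its spacing has exponential tails, without which the strong law would not apply.
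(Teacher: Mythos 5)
Your proposal is correct and follows essentially the same route: the paper establishes Theorem~\ref{speedlim} by citing Theorem~4 of Durrett--Schinazi for survival (which rests on exactly the renormalization of Proposition~\ref{couplDS} launched from $\pr(\zeta_T^O\in Z_0)>0$, as you describe) and Theorem~\ref{THEthm}(i) of Chapter~1 for the law of large numbers. Your sketch of the break-point sequence, the exponential bounds from Propositions~\ref{expbounds3scp} and~\ref{shadldev2}, the non-stopping-time subtlety resolved via Lemmas~\ref{bptsinf} and~\ref{Xbdownii}, and the interpolation between $\tilde{\tau}_n$ and $\tilde{\tau}_{n+1}$ are precisely the content of Definition~\ref{definbpts}, Theorem~\ref{THEprop}, and the proof of Theorem~\ref{THEthm}(i).
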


\section{Proof of Theorem \ref{thmsub}}\label{Ssub} 
In this section we establish Theorem \ref{thmsub} as the compound of two separate propositions. 
Recall that $\mu_{c}$ is the critical value of the contact process and that $|B|$ denotes the cardinality of $B\subset \Z$. 

\begin{lem}\label{Tparenthexp}
Let $\hat{\xi}_{t}^{A}$ be the contact process constrained on $\{\min A,\dots,\max A\}$ started from $A$. Consider  $\hat{\xi}_{t}^{A}$, $|A|<\infty$, with the same parameter $\mu$. For all $\mu<\mu_{c}$ there exists
$C,\gamma>0$ independent of $A$ such that 
\begin{equation*}\label{cpuni}
\pr\left( \exists \hspace{0.5mm} s\geq t \textup{ s.t., } \hat{\xi}_{s}^{A} \cap \min A \not= \emptyset \textup{ or } \hat{\xi}_{s}^{A} \cap \max A \not= \emptyset \right) \leq C e^{-\gamma t}, \hspace{2mm} \text{ for all } t\geq0.
\end{equation*}

\end{lem}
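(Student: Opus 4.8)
The plan is to reduce the statement to the exponential decay of the subcritical contact process (Theorem~\ref{thmpre1}) via a union bound over the two endpoints, combined with a monotonicity argument that lets us drop the constraining of $\hat\xi$ to the interval $\{\min A,\dots,\max A\}$. Throughout, write $m = \min A$ and $m' = \max A$, and note that $|A|<\infty$ implies $m' - m < \infty$. The key point to establish first is that $\hat\xi^{A}_{s} \subseteq \xi^{A}_{s}$ for all $s$, where $\xi^{A}_{s}$ is the (unconstrained) contact process with parameter $\mu$ started from $A$ and coupled to $\hat\xi^{A}_{s}$ through the same graphical construction: indeed, the constrained process uses exactly the same recovery marks and the same subset of infection arrows (those internal to the interval), so by an easy induction on the transition times of the graphical construction every infected site of $\hat\xi^{A}$ is infected in $\xi^{A}$. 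Consequently
\[
\{\exists\, s\geq t : \hat\xi^{A}_{s}\cap\{m\}\neq\emptyset\} \subseteq \{\exists\, s\geq t : \xi^{A}_{s}\cap\{m\}\neq\emptyset\},
\]
and likewise for $m'$.

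Next I would bound $\pr(\exists\, s\geq t : \xi^{A}_{s}\cap\{m\}\neq\emptyset)$ uniformly in $A$. The natural route is self-duality: for fixed $s$, $\pr(\xi^{A}_{s}\cap\{m\}\neq\emptyset) = \pr(\xi^{\{m\}}_{s}\cap A\neq\emptyset) \leq \pr(\xi^{\{m\}}_{s}\neq\emptyset) \leq e^{-\psi s}$ by Theorem~\ref{thmpre1} with $|\{m\}|=1$, and this is already independent of $A$. To pass from fixed $s$ to the event $\exists\, s\geq t$, I would not integrate naively but instead control the process at the integer grid points and use that between consecutive integers nothing too wild happens. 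Concretely: on $\{\exists\, s\geq t : \xi^{A}_{s}\cap\{m\}\neq\emptyset\}$ there is an integer $n\geq\lfloor t\rfloor$ such that a path reaches $m$ at some time in $[n,n+1]$; tracing such a path back to time $n$, it occupies some site at time $n$ from which $m$ is reachable within one unit of time. A cleaner packaging is to observe that $\{\xi^{A}_{s}\cap\{m\}\neq\emptyset \text{ for some } s\geq t\}$ implies $\xi^{A}_{n}\cap J_{n}\neq\emptyset$ for some integer $n\geq \lfloor t\rfloor$, where $J_n$ is the random (but a.s.\ finite-range-controlled) set of sites from which $m$ is reachable by time $n+1$; one then dominates by a union bound
\[
\pr\Big(\exists\, s\geq t : \xi^{A}_{s}\cap\{m\}\neq\emptyset\Big) \;\leq\; \sum_{n\geq \lfloor t\rfloor} \pr\big(\xi^{A}_{n}\cap J_{n}\neq\emptyset\big),
\]
and estimates each summand by duality plus Theorem~\ref{thmpre1} and a large-deviations bound on $|J_n|$ (the number of arrows emanating near $m$ in one time unit is Poisson, hence has exponential tails), giving a summand of order $e^{-\psi' n}$ for some $\psi'>0$ independent of $A$. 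Summing the geometric series yields $Ce^{-\gamma t}$ with $C,\gamma$ depending only on $\mu$.

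Finally, adding the two bounds for $m$ and $m'$ (doubling $C$) gives the claim. \textbf{The main obstacle} I anticipate is the passage from the ``for fixed $s$'' duality estimate to the ``$\exists\, s\geq t$'' supremum-type event: one must argue that the process cannot sneak back to an endpoint at some irrational time without having been ``close'' at a nearby integer time, and quantify this via the Poisson tails of the graphical construction so that the resulting bound is genuinely uniform in the (possibly large) set $A$. Everything else—the coupling inequality $\hat\xi^{A}\subseteq\xi^{A}$, self-duality, and invoking Theorem~\ref{thmpre1} with a singleton—is routine.
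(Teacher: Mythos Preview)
Your approach is essentially the same as the paper's: drop the constraint by monotonicity ($\hat\xi^{A}\subseteq\xi^{A}$), use self-duality to turn the fixed-time hitting probability into $\pr(\xi_{s}^{\{m\}}\neq\emptyset)\leq e^{-\psi s}$ uniformly in $A$, then discretize in time and sum a geometric series.

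The only place you diverge is in the discretization step, and here the paper's device is noticeably simpler than your dual-set $J_{n}$ argument. The paper works with the \emph{constrained} process $\hat\xi^{[0,N]}$ at this stage and argues via the Markov property: if $\hat\xi^{[0,N]}$ hits an endpoint at some time $s\in(k-1,k]$, then with probability at least $e^{-1}$ there is no recovery mark on that endpoint's time-line between $s$ and $k$, so the endpoint is still occupied at the integer time $k$. This yields directly $\pr(D_{N,k})\leq e\,\pr(E_{N,k})\leq 2e\,e^{-\psi k}$, with no need to control a random dual set or invoke Poisson large deviations. Your route via $J_{n}$ is correct (the key point being that $J_{n}$ depends only on the graphical construction on $[n,n+1]$, hence is independent of $\xi^{A}_{n}$, and $\E|J_{n}|$ is finite and independent of $n,A$), but it is more machinery than the problem requires.
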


\begin{proof}
Consider the graphical construction on the integers for the contact process with parameter $\mu$ (i.e.\ there is only one type of arrows positioned according to event times of Poisson processes at rate $\mu$), and let $\mu< \mu_{c}$. 
Let $N\geq0$ be any finite integer and consider $\hat{\xi}_{t}^{[0,N]}$ defined by use of truncated paths containing vertical segments of time axes of sites within $[0,N]$ only. By monotonicity and translation invariance, it is sufficient to prove that there exist $C,\gamma>0$ independent of $N$ such that, 
\begin{equation}\label{lemcutpunc}
\pr\left( \exists \hspace{0.5mm} s\geq t \textup{ s.t., } \hat{\xi}_{s}^{[0,N]} \cap  0 \not= \emptyset \textup{ or } \hat{\xi}_{s}^{[0,N]} \cap N \not= \emptyset \right) \leq C e^{-\gamma t}, \hspace{2mm} \text{ for all } t\geq0.
\end{equation}

Define $E_{N,t} = \{ \hat{\xi}_{t}^{[0,N]} \cap N \not= \emptyset \textup{ or } \hat{\xi}_{t}^{[0,N]} \cap 0 \not= \emptyset \}$, $t \geq 0$. We first show that there exists a $\psi>0$ such that for any $N\geq0$,
\begin{equation}\label{subdual}
\pr(E_{N,t}) \leq 2 e^{-\psi t} \hspace{2mm} \mbox{ for all } t\geq0. 
\end{equation}
Recall that we denote by $\xi_{t}^{A}$ the contact process started from $A$. By duality equation (\ref{selfdual}) and translation invariance we have that there exists a $\psi>0$ such that for any $N\geq0$,
\begin{eqnarray*}\label{dualandmon}
\pr\big(\xi^{[0,N]}_{t} \cap N  \not= \emptyset\big) &=& \pr\big(\xi_{t}^{0} \cap [-N,0] \not= \emptyset\big)\\
&\leq&  \pr\big(\xi_{t}^{0} \cap \Z \not= \emptyset\big)\hspace{1mm} \leq \hspace{1mm} e^{-\psi t} 
\end{eqnarray*}
for all $t\geq0$, where the two inequalities come from monotonicity and Theorem \ref{thmpre1} respectively. The display above gives us (\ref{subdual}), by monotonicity and translation invariance. 

For all integers $k\geq1$, define the event $D_{N,k}$ to be such that $\omega \in D_{N,k}$ if and only if $\omega \in E_{N,s}$ for some $s \in (k-1 ,k]$. By the Markov property for $\hat{\xi}_{t}^{[0,N]}$, because the probability of no recovery mark on the time axis of $N$ or $0$ from the first time $s \in (k-1,k]$ such that $\omega \in E_{N,s}$ until time $k$ is at least $e^{-1}$, we have that for all $k\geq1$,
\begin{equation}\label{expminus}
e^{-1}\pr(D_{N,k}) \leq \pr(E_{N,k}).
\end{equation}

Considering the event $\bigcup\limits_{l\geq0}D_{N, l+\lfloor t \rfloor}$, where $\lfloor \cdot\rfloor$ denotes the floor function, Boole's inequality gives that 
\begin{equation*}
\pr\left( \exists \hspace{0.5mm} s\geq t \textup{ s.t., } \hat{\xi}_{s}^{[0,N]} \cap  0 \not= \emptyset \textup{ or } \hat{\xi}_{s}^{[0,N]} \cap N \not= \emptyset \right) \leq  \sum_{l\geq0} \pr(D_{N, l+\lfloor t \rfloor})
\end{equation*}
$t\geq0$,  by (\ref{expminus}) and then (\ref{subdual}), the last display implies (\ref{lemcutpunc}), thus the proof is completed. 
\end{proof}

We note that the preceding lemma is used into proving the next one as well as for proving Corollary \ref{Teta} below. 

\begin{lem}\label{ANScp}
Let $\tilde{\xi}_{t}^{A}$ be the contact process constrained on $\{\min A-1,\dots,\max A+1\}$ started from $A$. Consider $\tilde{\xi}_{t}^{A}, |A| < \infty$, with the same parameter $\mu$. For all $\mu<\mu_{c}$ there exists $\epsilon>0$ independent of $A$ such that $\pr\big( \forall \hspace{0.5mm} t\geq0, \tilde{\xi}_{t}^{A} \subseteq [\min A,\max A]\big) \geq \epsilon$.
\end{lem}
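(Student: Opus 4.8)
The plan is to establish Lemma \ref{ANScp} by showing that the constrained process $\tilde{\xi}_{t}^{A}$ on $\{\min A - 1, \dots, \max A + 1\}$ stays inside $[\min A, \max A]$ with probability bounded below uniformly in $A$, by combining the exponential estimate of Lemma \ref{Tparenthexp} (for the process constrained on $\{\min A, \dots, \max A\}$) with a coupling that controls the two extra sites $\min A - 1$ and $\max A + 1$. By translation invariance we may assume $A \subseteq [0, N]$ with $\min A = 0$, $\max A = N$, so that $\tilde{\xi}_{t}^{A}$ lives on $\{-1, \dots, N+1\}$ and we must bound below the probability of the event $G = \{\forall t \geq 0,\ \tilde{\xi}_{t}^{A} \subseteq [0,N]\}$.

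First I would set up the coupling: run $\hat{\xi}_{t}^{A}$, the contact process constrained on $\{0,\dots,N\}$, and $\tilde{\xi}_{t}^{A}$ on the same graphical construction. As long as no arrow has ever fired from $0$ to $-1$ nor from $N$ to $N+1$ at a time when the source site is occupied, the two processes agree and $\tilde{\xi}_{t}^{A} \subseteq [0,N]$; conversely the first time $\tilde{\xi}$ escapes $[0,N]$ is exactly such a crossing event, and it can only occur when $0 \in \hat{\xi}_{t}^{A}$ (respectively $N \in \hat{\xi}_{t}^{A}$), since up to that moment the two processes are identical. So it suffices to bound below the probability that no $\lambda$-arrow (here just the single type of $\mu$-rate arrow, since this is the genuine contact process) from $0 \to -1$ fires while $0$ is occupied in $\hat{\xi}^{A}$, and likewise from $N \to N+1$ while $N$ is occupied. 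Here is where Lemma \ref{Tparenthexp} enters: split time at a deterministic level $t_{0}$. On $[0,t_{0}]$ the sites $0$ and $N$ may be occupied, but the probability that a boundary-crossing arrow fires in that window while the source is occupied is at most the probability that a rate-$\mu$ Poisson clock rings in $[0,t_{0}]$, which is $1 - e^{-\mu t_{0}}$ per boundary site, hence at least $e^{-2\mu t_{0}}$ of the probability survives this phase (and I can make this a genuine lower bound, not merely an upper bound on the complement, by working with the independent boundary-arrow Poisson processes directly). After time $t_{0}$, Lemma \ref{Tparenthexp} with $t = t_{0}$ gives that $\hat{\xi}_{s}^{A}$ touches $0$ or $N$ for some $s \geq t_{0}$ only with probability at most $Ce^{-\gamma t_{0}}$, uniformly in $A$; on the complement of that event no crossing arrow can ever fire after $t_{0}$ because the source site is never again occupied.

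Putting the two phases together, $\pr(G) \geq e^{-2\mu t_{0}} - Ce^{-\gamma t_{0}}$ after a union bound, where the first term bounds below the probability of no crossing in $[0,t_{0}]$ and the second bounds above the probability that a crossing is even possible after $t_{0}$; one must be slightly careful that these two events are handled on the same probability space, which is fine since the boundary-crossing Poisson clocks on the edges $\{-1,0\}$ and $\{N,N+1\}$ are independent of everything determining $\hat{\xi}^{A}$ restricted to $\{0,\dots,N\}$. Now choose $t_{0}$ large enough that $Ce^{-\gamma t_{0}} \leq \tfrac{1}{2}e^{-2\mu t_{0}}$ — this is possible as soon as we also insist $t_{0}$ be bounded, e.g. take $t_{0}$ the smallest value with this property, which depends only on $C,\gamma,\mu$ and not on $A$ — so that $\pr(G) \geq \tfrac{1}{2}e^{-2\mu t_{0}} =: \epsilon > 0$, uniformly in $A$.

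The main obstacle I anticipate is the bookkeeping in the coupling argument: making precise that the event ``$\tilde{\xi}$ escapes $[0,N]$'' decomposes cleanly into ``a crossing arrow fires in $[0,t_{0}]$ while its source is occupied'' versus ``$\hat{\xi}$ revisits a boundary site after $t_{0}$'', and that the former really is dominated by an independent pair of Poisson clocks whose non-firing probability I can lower-bound directly. One has to check that, up until the first escape time, $\tilde{\xi}$ and $\hat{\xi}$ genuinely coincide — which is true because the only way $\tilde{\xi}$ can differ from $\hat{\xi}$ is by occupying $-1$ or $N+1$, and that is precisely the escape — and that occupancy of the source site of a crossing arrow at the arrow's firing time can be read off from $\hat{\xi}$ alone on the event that no earlier crossing occurred. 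None of this is deep, but it is the step that needs care; the exponential inputs from Lemma \ref{Tparenthexp} and the elementary Poisson estimate then combine immediately.
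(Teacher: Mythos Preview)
Your approach is correct in outline but has a gap in the final step, and it differs from the paper's argument in an interesting way.

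\textbf{The gap.} Your union bound gives $\pr(G)\geq e^{-2\mu t_0}-Ce^{-\gamma t_0}$, and you then want $t_0$ with $Ce^{-\gamma t_0}\leq\tfrac12 e^{-2\mu t_0}$. This is equivalent to $e^{(2\mu-\gamma)t_0}\leq 1/(2C)$, which for large $t_0$ holds only if $\gamma>2\mu$; nothing in Lemma~\ref{Tparenthexp} guarantees that. Your parenthetical about ``insisting $t_0$ be bounded'' does not rescue this: if $\gamma<2\mu$ and $C>1/2$ the inequality fails for every $t_0\geq0$. The fix is already in your hands: you correctly observe that the boundary arrows on $(0,-1)$ and $(N,N+1)$ are independent of the graphical construction restricted to $\{0,\dots,N\}$ that determines $\hat{\xi}^A$. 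Use that independence instead of a union bound to get
\[
\pr(G)\;\geq\;\pr(A_1)\pr(A_2)\;\geq\;e^{-2\mu t_0}\bigl(1-Ce^{-\gamma t_0}\bigr),
\]
and then simply pick any fixed $t_0$ with $Ce^{-\gamma t_0}\leq 1/2$, which depends only on $C,\gamma$ (hence on $\mu$) and not on $A$. This yields $\epsilon=\tfrac12 e^{-2\mu t_0}>0$ uniformly in $A$, and the proof is complete.

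\textbf{Comparison with the paper.} The paper does not split time or use the coupling $\tilde\xi=\hat\xi$ until the first escape. Instead it bounds $\pr(\tilde\xi^{[0,N]}_t\cap\{-1,N+1\}\neq\emptyset)$ by $2e^{-\psi t}$ via comparison with $\hat\xi^{[-1,N+1]}$ and duality, discretises time into unit intervals with events $\tilde D_{N,k}$, and then applies the Harris--FKG inequality to the decreasing events $\tilde D_{N,k}^c$ to bound $\pr(G)$ below by an infinite product $\prod_{k\geq1}\pr(\tilde D_{N,k}^c)$, which is positive by the exponential decay. Your route avoids FKG entirely by exploiting the exact independence of the boundary arrows from the interior process, at the cost of the extra coupling bookkeeping you describe. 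Both are short; yours is arguably more elementary once the combination step is repaired.
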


\begin{proof}
Consider the graphical construction for the contact process with parameter $\mu$, $\mu<\mu_{c}$. 
Let $N\geq0$ be any finite integer and consider $\tilde{\xi}_{t}^{[0,N]}$ defined by use of truncated paths containing vertical segments of time axes of sites only within $[-1,N+1]$. By monotonicity and translation invariance, it is sufficient to show there exists $\epsilon>0$ independent of $N$ such that
\begin{equation}\label{epsone}
\pr\big( \forall \hspace{0.5mm} t\geq0, \mbox{ }\tilde{\xi}^{[0,N]}_{t} \subseteq [0,N]\big) \geq \epsilon.
\end{equation}

Define $\tilde{E}_{N,t} = \{\tilde{\xi}_{t}^{[0,N]} \cap N+1 \not= \emptyset \textup{ or } \tilde{\xi}_{t}^{[0,N]} \cap -1 \not= \emptyset \}$, $t \geq 0$. We have that there is a $\psi>0$ such that for any $N\geq0$,
\begin{equation}\label{subdual2}
\pr(\tilde{E}_{N,t}) \leq 2 e^{-\psi t} \hspace{2mm} \mbox{ for all } t\geq0.
\end{equation}
The proof of (\ref{subdual2}) is derived by (\ref{subdual}) as follows. Let $\hat{\xi}_{t}^{[0,N]}$ and $E_{N,t}$ be as in Lemma \ref{Tparenthexp}, by monotonicity we have that $\tilde{\xi}_{t}^{[0,N]}$ is stochastically smaller than $\hat{\xi}_{t}^{[-1,N+1]}$. Hence $\pr(\tilde{E}_{N,t}) \leq \pr(E_{N+2,t})$ by translation invariance. Alternatively, one can in essence repeat the arguments used for the proof of (\ref{subdual}). 

Define $\tilde{D}_{N,k} = \{ \omega: \omega \in \tilde{E}_{N,s} \mbox{ for some } s \in (k-1,k]\}$, for integer $k\geq1$.  Note that $\textstyle \bigcap\limits_{k\geq1}\tilde{D}^{c}_{N,k}$ is equal to $\left\{ \forall \hspace{0.5mm} t\geq0, \tilde{\xi}^{[0,N]}_{t} \subseteq [0,N] \right\}$ and that $\pr(\textstyle \bigcap\limits_{k\geq1}\tilde{D}^{c}_{N,k}) = \textstyle \lim\limits_{K \rightarrow \infty} \pr( \bigcap\limits_{k\geq1}^{K}\tilde{D}^{c}_{N,k})$.  Since also $\{\tilde{D}^{c}_{N,k}\}_{k\geq1}^{K}$ are monotone decreasing and hence positively correlated, the Harris-FKG inequality (see e.g.\ \cite{D88}, \cite{L}) gives that
\[
\pr\big(\forall \hspace{0.5mm} t\geq0, \mbox{ }\tilde{\xi}^{[0,N]}_{t} \subseteq [0,N] \big)  \geq  \prod_{k \geq1} \pr(\tilde{D}^{c}_{N,k}).
\] 

However, by elementary properties of infinite products, $(\ref{subdual2})$ implies that there is $\epsilon>0$ independent of $N$ such that $\prod\limits_{k \geq 1} \big(1 - e\pr(\tilde{E}_{N,k})\big)>\epsilon$. Because, similarly to (\ref{expminus}), we have that $\pr(\tilde{D}_{N,k}) \leq e\pr(\tilde{E}_{N,k})$, which implies (\ref{epsone}) from the last display above, and thus completes the proof.  
\end{proof}



We return to consideration of the three state contact process. 

\begin{definition}\label{calI}
We denote by $\mathcal{I}(\zeta)$ the set of infected sites of some given configuration $\zeta$, that is,  $\mathcal{I}(\zeta) = \{y \in \Z:\zeta(y) =1\}$.
\end{definition}

To state the next result, for all $N\geq0$, let $\eta_{N}$ be such that $\mathcal{I}(\eta_{N}) = \{-N,\dots, N\},$ while all other sites in $\eta_{N}$ are susceptible and never infected. Note that $\eta_{0}$ is actually the standard initial configuration, and hence for $N=0$ the next result reduces to the first part of Theorem \ref{thmsub}.

\begin{prop}\label{geomb}
Consider $\zeta_{t}^{\eta_{N}}, N<\infty$, with parameters $(\lambda, \mu)$. For all $\mu<\mu_{c}$ and $\lambda<\infty$, there exists $\epsilon>0$ independent of $N$ such that 
\begin{equation*}\label{eqgeomb}
\pr\Big(\exists \hspace{0.5mm}t \textup{ s.t.}, \mbox{ }\zeta_{t}^{\eta_{N}}(N+n)=1  \textup{ or } \zeta_{t}^{\eta_{N}}(-N-n)=1 \Big) \leq (1-\epsilon)^{n}, \hspace{2mm} \mbox{ for all } n\geq1. 
\end{equation*}

\end{prop}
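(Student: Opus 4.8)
The plan is to track the times at which the range of the epidemic first extends one site farther beyond $[-N,N]$ on either side, and to show that at each such time there is a probability bounded below — uniformly in $N$ and in the time — that the epidemic is confined to its current range forever; iterating then yields the geometric estimate, the uniformity in $N$ coming directly from the uniformity in $A$ in Lemmas \ref{Tparenthexp} and \ref{ANScp}. Concretely, for $n\ge1$ put $\sigma_n=\inf\{t\ge0:\zeta_t^{\eta_N}(x)=1\text{ for some }x\text{ with }|x|\ge N+n\}$. By the nearest neighbours structure $\{\sigma_n<\infty\}$ is exactly the event whose probability is to be bounded, and on $\{\sigma_n<\infty\}$ the set of sites infected at some time in $[0,\sigma_n]$ is an interval $[a,b]$ with $[-N,N]\subseteq[a,b]\subseteq[-N-n,N+n]$, every site of which is in state $0$ or $1$ (``conquered''), while $\zeta_{\sigma_n}^{\eta_N}(a-1)=\zeta_{\sigma_n}^{\eta_N}(b+1)=-1$. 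The reason for isolating the conquered interval is that on it the process evolves precisely as a parameter-$\mu$ contact process — the transition $0\to1$ responds to the same arrows as the contact process — and the interval grows only through a first infection, i.e.\ a $-1\to1$ transition at $b+1$ from $b$ or at $a-1$ from $a$.

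The core step is the one-step bound: there is $\epsilon>0$ depending only on $(\lambda,\mu)$ with $\pr(\sigma_{n+1}<\infty\mid\mathcal{F}_{\sigma_n})\le 1-\epsilon$ on $\{\sigma_n<\infty\}$, where $\mathcal{F}_t$ is the $\sigma$-algebra of the graphical construction up to $t$. Working on $\{\sigma_n<\infty\}$ and conditioning on $\mathcal{F}_{\sigma_n}$, the interval $A:=[a,b]$ is known and, by the strong Markov property, the graphical construction after $\sigma_n$ is a fresh copy. Let $\tilde\xi_t$ be the parameter-$\mu$ contact process constrained to $[a-1,b+1]$ started at time $\sigma_n$ from all of $A$; since $\mathcal{I}(\zeta_{\sigma_n}^{\eta_N})\subseteq A$ and the two processes obey the same (contact process) dynamics on $[a,b]$ with no input from $a-1,b+1$ until escape, monotonicity gives $\mathcal{I}(\zeta_{\sigma_n+t}^{\eta_N})\cap[a,b]\subseteq\tilde\xi_{\sigma_n+t}$ for as long as $\zeta^{\eta_N}$ has not left $[a,b]$. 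By Lemma \ref{ANScp} the event $\Gamma:=\{\tilde\xi_{\sigma_n+t}\subseteq[a,b]\ \forall t\ge0\}$ satisfies $\pr(\Gamma\mid\mathcal{F}_{\sigma_n})\ge\epsilon_1>0$ with $\epsilon_1$ independent of $A$. On $\Gamma$ no contact process arrow leaves $A$ while its tail is occupied, so $\zeta^{\eta_N}$ cannot escape $[a,b]$ by such an arrow; when $\mu\ge\lambda$ the first-infection $\lambda$-arrows are among the contact process arrows, so this already forces $\sigma_{n+1}=\infty$ on $\Gamma$. When $\lambda>\mu$ the graphical construction carries an additional independent Poisson flow, of rate $\lambda-\mu$, of first-infection-only arrows out of $b$ (resp.\ $a$), and one must also prevent such an arrow from firing while $b$ (resp.\ $a$) is occupied: on $\Gamma$ the process $\tilde\xi$ coincides with the contact process constrained to $[a,b]$ started full, so by Lemma \ref{Tparenthexp} the total time $b$ is occupied has expectation at most $\int_0^\infty\min(1,Ce^{-\gamma t})\,dt<\infty$ uniformly in $A$, hence is stochastically dominated by an $N$-independent a.s.\ finite variable, and the chance that none of the rate-$(\lambda-\mu)$ arrows out of $b$ fires during it is at least some $\epsilon_2>0$ depending only on $(\lambda,\mu)$, and likewise for $a$; these flows being independent of the data driving $\tilde\xi$, one gets $\pr(\sigma_{n+1}=\infty\mid\mathcal{F}_{\sigma_n})\ge\epsilon_1\epsilon_2^2=:\epsilon$ on $\{\sigma_n<\infty\}$.

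Granting the one-step bound, since $\sigma_0=0$ one iterates
\[
\pr(\sigma_n<\infty)=\E\big[\mathbf{1}_{\{\sigma_{n-1}<\infty\}}\,\pr(\sigma_n<\infty\mid\mathcal{F}_{\sigma_{n-1}})\big]\le(1-\epsilon)\,\pr(\sigma_{n-1}<\infty)\le\cdots\le(1-\epsilon)^n,
\]
which is the assertion, with $\epsilon$ independent of $N$. I expect the main obstacle to be exactly the failure, when $\lambda>\mu$, of any domination of the three state contact process by the parameter-$\mu$ contact process, since the frontier performs first infections at the larger rate $\lambda$; this is what rules out simply confining a comparison contact process, and the remedy is the two-sided confinement of the conquered interval via Lemma \ref{ANScp} together with the separate absorption of the excess frontier arrows using the uniform bound on endpoint occupation times from Lemma \ref{Tparenthexp}. (A minor point is that $\sigma_n$ is a genuine stopping time, so the strong Markov property and the conditioning are legitimate.)
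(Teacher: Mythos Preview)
Your overall strategy---define the escape times $\sigma_n$, prove a uniform one-step bound $\pr(\sigma_{n+1}<\infty\mid\mathcal{F}_{\sigma_n})\le 1-\epsilon$ on $\{\sigma_n<\infty\}$ via Lemma~\ref{ANScp}, and iterate---is exactly the paper's. The difference lies in how the one-step bound is obtained, and for $\lambda>\mu$ your version has a gap.

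When $\lambda>\mu$ you confine the parameter-$\mu$ comparison process $\tilde\xi$ to $[a,b]$ (event $\Gamma$) and then separately suppress the extra first-infection $(\lambda-\mu)$-arrows out of $a$ and $b$ during the occupation time of those endpoints (events $E_a,E_b$). You then write $\pr(\Gamma\cap E_a\cap E_b)\ge\epsilon_1\epsilon_2^2$, citing ``these flows being independent of the data driving $\tilde\xi$''. But $E_b$ is not determined by the $(\lambda-\mu)$-flow alone: it also depends on \emph{when} $b$ is occupied by $\tilde\xi$, and that is correlated with $\Gamma$. So independence of the flows does not give the product bound. The bound is true---$\Gamma,E_a,E_b$ are all decreasing in the graphical construction, so FKG yields $\pr(\Gamma\cap E_a\cap E_b)\ge\pr(\Gamma)\pr(E_a)\pr(E_b)$---but you have to say so; alternatively one can condition on $\hat\xi^{[a,b]}$ and compute $\pr(\Gamma\cap E_a\cap E_b)=\E\bigl[e^{-\lambda(L_a+L_b)}\bigr]$ directly, which is uniformly positive by Lemma~\ref{Tparenthexp}.

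The paper sidesteps this entirely. Instead of applying Lemma~\ref{ANScp} immediately at $\sigma_n$, it first spends one unit of time on the event $B_N$ forcing the infected set to retreat to $[a+1,b-1]$ (the boundary sites $a,b$ recover and are not reinfected); only then is Lemma~\ref{ANScp} invoked, with the comparison process living on $[a,b]$ and the good event being that it never reaches $a$ or $b$. On that event the three state contact process, dominated by the comparison, never even occupies $a$ or $b$, so the value of $\lambda$ at the $-1$-boundary never matters. This retreat-by-one trick is what makes the paper's argument uniform in the pair $(\lambda,\mu)$ with no case distinction and no occupation-time bookkeeping; your route works but costs you either an FKG step or a conditional computation that you have not supplied.
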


\begin{proof}
Consider the graphical construction for $(\lambda, \mu)$ as in the statement. Let $N\geq0$ be any finite integer and consider the process $\zeta_{t}^{\eta_{N}}$, let also $I_{t}^{N} =  \I(\zeta_{t}^{\eta_{N}})$. 
We first show that there exists $\epsilon>0$ independent of $N$ such that
\begin{equation}\label{targ}
\pr\big( \forall\hspace{0.5mm} t \geq 0, \mbox{ }I_{t}^{N} \subseteq  [-N, N] \big)\geq \epsilon. 
\end{equation}
Define the event $B_{N}= \left\{ \forall\hspace{0.5mm}s \in (0,1], I_{s}^{N}\subseteq [-N,N]\right\} \cap \left\{I_{1}^{N} \subseteq [-N+1,N-1] \right\}$ and also $F_{N} = \{\forall\hspace{0.5mm} t \geq 1, \mbox{ }I_{t}^{N} \subseteq  [-N+1, N-1]\}$. From Lemma \ref{ANScp} and the Markov property at time $1$, there exists $\epsilon>0$ independent of $N$ such that $\pr\big(F_{N}| B_{N} \big)\geq \epsilon$. From this and because also $\{\forall\hspace{0.5mm} t \geq 0, \mbox{ }I_{t}^{N} \subseteq  [-N, N] \}\supseteq F_{N} \cap B_{N}$, it is sufficient to show that $\pr(B_{N})$ is bounded away from zero uniformly in $N$. However we have that $	B_{N} \supseteq B_{N}'$, where $B_{N}'$ is the event that (a) no arrow exists from $N \times s$ to $N+1 \times s$ and from $-N\times s$ to $-N-1\times s$ for all times $s \in (0,1]$, (b) a recovery mark exists within $N \times (0,1]$ and $-N \times (0,1]$ and, (c) no arrow exists from $N-1 \times s$ to $N\times s$ and over $-N+1 \times s$ to $-N \times s$, for all times $s \in (0,1]$. (Note that (b) implies that there is a $t\in(0,1]$ such that $I_{t}^{N} \subseteq [-N+1,N-1]$ and (c) assures that this equation holds for $t=1$). Thus, because by translation invariance $\pr(B_{N}')$ does not depend in $N$ and is strictly positive, we get that (\ref{targ}) is proved. 

From (\ref{targ}) by monotonicity (of the contact process) we have that, indeed for any $\eta$ such that $\eta(x) \not=-1$, $\forall \hspace{0.1mm} x \in [\min\I(\eta), \max\I(\eta)]$, $\pr\big( \forall \hspace{0.5mm}t \geq 0,\mbox{ } \I(\zeta_{t}^{\eta}) \subseteq  \I(\eta) \big)\geq \epsilon$. Due to the nearest neighbours assumption, the proof is completed by $n$ repeated applications of  the Strong Markov Property and the last inequality.
 



\end{proof}

For the proof of Proposition \ref{propsub2} below we will use the previous proposition as well as the next corollary.
To state the latter the following definitions are needed. For any $\eta$ such that $|\I(\eta)|<\infty$, consider $\zeta_{t}^{\eta}$ and define the associated stopping time $T^{\eta}:= \inf\{t\geq0 : \I(\zeta_{t}^{\eta}) \not\subseteq [\min\I(\eta), \max\I(\eta)]\}$. Define also the collection of configurations $H = \{\eta:\eta(x) \not=-1, \mbox{ } \forall x \in [\min\I(\eta), \max\I(\eta)]\}$. We also note that the indicator of an event $E$ is denoted by $1_{E}$. 
\begin{cor}\label{Teta}
Consider $\zeta_{t}^{\eta}, \eta \in H$, with parameters $(\lambda, \mu)$. For all $\mu<\mu_{c}$ and $\lambda<\infty$, there exist $C$ and $\theta>0$ independent of $\eta \in H$ such that $\E(e^{\theta T^{\eta}1_{\{T^{\eta}<\infty\}}}) \leq C$.
\end{cor}

\begin{proof}
Let $\hat{\xi}_{t}^{A}$ be as in Lemma \ref{Tparenthexp}. 
For any $\eta \in H$ coupling $\zeta_{t}^{\eta}$ with parameters $(\lambda, \mu)$ with $\hat{\xi}_{t}^{A}$ with parameter $\mu$ and $A = \I(\eta)$ by the graphical representation gives that 
$\{t \leq T^{\eta} <\infty\} \subseteq \{\exists \hspace{0.5mm} s\geq t \textup{ s.t., } \hat{\xi}_{s}^{A} \cap \min A \not= \emptyset \textup{ or } \hat{\xi}_{s}^{A} \cap \max A \not= \emptyset\}$ holds, hence the proof follows from the before-mentioned lemma and the integral representation of expectation. 
\end{proof}

The next statement is the second part of Theorem \ref{thmsub}. Recall that $\zeta_{t}^{O}$ denotes the process started from the standard configuration. 

\begin{prop}\label{propsub2}
Consider $\zeta_{t}^{O}$ with parameters $(\lambda,\mu)$ and let $I_{t} = \I(\zeta_{t}^{O})$.
For all $\mu<\mu_{c}$ and $\lambda<\infty$, there exist $C$, $\gamma>0$ such that
$\pr(I_{t} \not= \emptyset) \leq C e^{-\gamma t}$ for all $t\geq0$. 
\end{prop}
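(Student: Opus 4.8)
The plan is to deduce exponential decay of $\pr(I_t \neq \emptyset)$ from the fact that, once the epidemic exits the interval spanned by its currently infected sites, it looks afresh like a process started from a configuration in the class $H$ of Corollary \ref{Teta}, so that the total lifetime decomposes into a geometric-like number of ``epochs'' each of which contributes an exponentially bounded amount of time, while simultaneously there is a fixed positive chance of dying without ever leaving the spanned interval. Concretely, for the process $\zeta_t^O$ started from $\eta_0 \in H$, define $T_0 = 0$ and inductively $T_{n+1} = \inf\{t \geq T_n : \I(\zeta_t^O) \not\subseteq [\,\min \I(\zeta_{T_n}^O), \max \I(\zeta_{T_n}^O)\,]\}$ on the event $\{\rho > T_n, T_n < \infty\}$, where $\rho = \inf\{t : I_t = \emptyset\}$; set $T_{n+1} = \infty$ otherwise. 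Each $\zeta_{T_n}^O$, restricted to the event $\{T_n < \infty, \rho > T_n\}$, has $\I(\zeta_{T_n}^O)$ a finite nonempty set, and after a recovery mark and an arrow cleanup the configuration lies in $H$ — more to the point, by the nearest neighbours assumption the configuration at time $T_n$ (when it is first not contained in the previous spanned interval) is itself, up to translation, obtained from a member of $H$.

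First I would invoke the strong Markov property at $T_n$ together with Corollary \ref{Teta}: conditionally on $\mathcal{F}_{T_n}$ and on $\{T_n < \infty\}$, the increment $T_{n+1} - T_n$ satisfies $\E(e^{\theta (T_{n+1}-T_n)1_{\{T_{n+1}<\infty\}}} \mid \mathcal{F}_{T_n}) \leq C$ with $C,\theta$ uniform in $n$, because the configuration at time $T_n$ (suitably understood) lies in $H$ and $T_{n+1}-T_n$ is exactly the time $T^\eta$ of that corollary for $\eta = \zeta_{T_n}^O$. Next, I would show that $\pr(T_{n+1} = \infty \mid \mathcal{F}_{T_n}, T_n < \infty) \geq \epsilon'$ for some $\epsilon' > 0$ uniform in $n$: the event $\{T_{n+1} = \infty\}$ contains the event that the process, started from a configuration in $H$, never exits its spanned interval, which by the intermediate inequality in the proof of Proposition \ref{geomb} (namely $\pr(\forall t \geq 0,\ \I(\zeta_t^\eta) \subseteq \I(\eta)) \geq \epsilon$ for $\eta \in H$) happens with probability at least $\epsilon$; combined with the uniformly positive probability that the remaining finite epidemic inside that interval dies out (again a finite-state Markov chain estimate, uniform since the interval has bounded relevant dynamics once confined — alternatively absorbed into $\epsilon$), this gives $\epsilon'$. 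Hence $N := \inf\{n : T_{n+1} = \infty\}$ is stochastically dominated by a geometric random variable, and on $\{\rho < \infty\}$ we have $\rho \leq \sum_{n=1}^{N}(T_n - T_{n-1})$.

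Then I would assemble these: by the standard computation for a geometric sum of i.i.d.\ (or, here, conditionally uniformly exponentially bounded) nonnegative increments, the moment generating function $\E(e^{\theta' \rho}1_{\{\rho<\infty\}})$ is finite for $\theta'>0$ small enough — choose $\theta'$ so that $C \cdot \pr(\text{epoch continues}) < 1$ in the telescoping conditional-expectation bound $\E(e^{\theta' T_{n+1}}1_{\{T_{n+1}<\infty\}}) \leq (C(1-\epsilon'))^{n}$ or similar — and Markov's inequality then yields $\pr(\rho > t) = \pr(I_t \neq \emptyset) \leq \E(e^{\theta' \rho}1_{\{\rho<\infty\}}) e^{-\theta' t} \leq C'e^{-\gamma t}$, noting $\{I_t \neq \emptyset\} = \{\rho > t\} \subseteq \{\rho < \infty\} \cup \{\rho = \infty\}$ and that $\pr(\rho = \infty) = 0$ for the subcritical process (which itself follows from the chain of epochs almost surely terminating). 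The main obstacle I anticipate is the bookkeeping at the epoch boundaries: verifying that the configuration at time $T_n$ genuinely falls in the class $H$ (or is a translate of such a configuration), so that Corollary \ref{Teta} applies with constants independent of $n$, and handling the nearest neighbours structure carefully enough that exiting the spanned interval means exiting through one of the two boundary sites — essentially the same point exploited in Proposition \ref{geomb}. Once that is pinned down, the rest is the routine geometric-sum moment generating function estimate.
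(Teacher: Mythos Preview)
Your approach is essentially the paper's: decompose the lifetime into successive epochs marked by exit times from the currently spanned interval, invoke Corollary~\ref{Teta} at each epoch boundary for a uniform exponential moment on the epoch length, and use the ``never-exits'' estimate from the proof of Proposition~\ref{geomb} to control the number of epochs. The paper phrases the epochs via $\tau_k = \inf\{t: |S_t| = k\}$ where $S_t = [\min I_t, \max I_t]$, writes $\rho = \tau_{K-1} + \sigma_K$ with $K = \inf\{k : \tau_k = \infty\}$, and bounds $\tau_{K-1}$ and $\sigma_K$ separately; your $T_n$'s are a slight re-indexing of the same idea.

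There is one genuine gap in your plan: the inequality $\rho \le \sum_{n=1}^N(T_n - T_{n-1}) = T_N$ is false. When $T_{N+1} = \infty$ the process is confined to $[\min I_{T_N}, \max I_{T_N}]$ from time $T_N$ onwards, but it is still alive at $T_N$ and only dies at some strictly later time $\rho > T_N$. Your remark about ``the uniformly positive probability that the remaining finite epidemic inside that interval dies out'' concerns the \emph{probability} of extinction once confined, not the \emph{time} to extinction, and it is the latter you must bound to control $\E(e^{\theta'\rho})$. The paper handles this residual piece $\sigma_K = \rho - \tau_{K-1}$ explicitly: after $\tau_{K-1}$ the dynamics inside the spanned interval are those of a subcritical $\mu$-contact process on an interval of size $K-1$, so Theorem~\ref{thmpre1} gives $\pr(\sigma_K > t, K = k) \le k\,e^{-\psi t}$, and summing over $k \le \lceil t \rceil$ (the exponential tail of $K$ handles larger $k$) completes the bound. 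Once you insert this step your outline is complete and matches the paper's proof.
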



\begin{proof} 
Consider the graphical construction for $(\lambda, \mu)$ as in the statement and let $S_{t} =  [\min I_{t} , \max I_{t} ] \cap \Z$, $t\geq0$, where by convention $\min \emptyset = \infty$. Define the stopping times $\tau_{k} = \inf\{t\geq0: |S_{t}| = k\}$, $k\geq1$, define also $K = \inf\{k: \tau_{k} = \infty\}$ and $\sigma_{K} = \inf\{s\geq 0 : I_{s+\tau_{K-1}} = \emptyset\}$. 
It is elementary that the sum of two exponentially bounded random variables is itself exponentially bounded, a simple proof can be obtained by the integral representation of expectation, Chernoff's bound and use of the Cauchy-Schwartz inequality. Because $\{I_{t} \not= \emptyset\}$ is the same as $\{\tau_{K-1}+\sigma_{K} \geq t\}$, it is sufficient to prove that the non-stopping time $\tau_{K-1}$, and $\sigma_{K}$ are both exponentially bounded.  Since $K$ is exponentially bounded by Proposition $\ref{geomb}$ and by set theory we have that, for all $a>0$,
\begin{equation*}\label{tauK}
\pr(\tau_{K-1} > t) \leq \pr(K > \lceil at \rceil ) + \pr(\tau_{K-1} > t, K \leq \lceil at \rceil ) 
\end{equation*}
$t\geq0$, it is sufficient to show that: (i) there is $a>0$ such that $\tau_{K-1}$ is exponentially bounded on $\{K \leq \lceil at \rceil\}$ and, by repeating the argument that led to the inequality of the last display above, (ii) $\sigma_{K}$ is exponentially bounded on $\{K \leq \lceil t \rceil\}$.


Let $H$ and $C,\theta>0$ be as in Corollary \ref{Teta}. By the Strong Markov Property because, due to the nearest neighbours assumption, $\zeta_{\tau_{k-1}}^{O} \in H$, we have that,  
\begin{eqnarray}\label{subindu}
\E( e^{\theta \tau_{k}1_{\{\tau_{k} < \infty\}}}) &\leq& \E(e^{\theta \tau_{k-1}1_{\{\tau_{k-1} < \infty\}}}e^{\theta (\tau_{k}-\tau_{k-1})1_{\{(\tau_{k}-\tau_{k-1}) < \infty\}}}) \nonumber \\
&\leq& C \E(e^{\theta \tau_{k-1}1_{\{\tau_{k-1} < \infty\}}})  \nonumber
\end{eqnarray}
$k\geq1$. Iterating the last inequality gives that $\E( e^{\theta \tau_{k}1_{\{\tau_{k} < \infty\}}}) \leq C^{k}$, thus, by set theory we have that, for all $a>0$,
\begin{eqnarray*}\label{tauN2}
\pr(\tau_{K-1} > t, K \leq \lceil at \rceil ) &\leq& \sum_{k=1}^{\lceil at \rceil}e^{-\theta t}\E( e^{\theta \tau_{k-1}1_{\{\tau_{k-1} < \infty\}}}) \nonumber\\
&\leq& \lceil at \rceil e^{-\theta t} C^{\lceil at \rceil}
\end{eqnarray*}
$t\geq0$, choosing $a>0$ such that $e^{-\theta}C^{\lceil a \rceil}<1$ we see that the right side of the last display is exponentially bounded in $t$, this proves (i). 

We prove (ii), let $(\hat{\xi}_{t}^{A})$ be the contact process at rate $\mu< \mu_{c}$ on the subsets of $\{\min A,\dots,\max A\}$ started from $A$. By coupling we have that $\{\sigma_{k}1_{\{K=k\}}\geq t\}$ is stochastically bounded above by $\{\hat{\xi}_{t}^{[1,k]} \not= \emptyset\}$, Theorem \ref{thmpre1} by monotonicity gives that $\sum_{k=1}^{\lceil t \rceil} \pr(\sigma_{k}>t, K=k )$ is exponentially bounded in $t$.
\end{proof}

\begin{remark}
\textup{ Note that Proposition \ref{geomb} implies that $\E|\zeta_{t}^{O}| \rightarrow 0$, as $t\rightarrow \infty$, by bounded dominated convergence. This combined with a subadditivity argument analogous to Proposition 1.1 in \cite{AJ}, or an adaptation of the method of proof in Theorem 6.1 \cite{GRI}, would imply Proposition \ref{propsub2}. However none of the approaches seems plausible due to the lack of a generic monotonicity property for the process when $\lambda >\mu$.}
\end{remark}

\section{Proof of Theorem \ref{thmsup}}\label{Ssup} 
Let $\zeta^{\bar{\eta}}_{t}$ be the three state contact process with parameters $(\lambda, \mu)$ started from $\bar{\eta}$ such that $\bar{\eta}(x) =1$ for all $x \leq 0$ and $\bar{\eta}(x) =-1$ otherwise, let also $\bar{r}_{t} = \sup\I(\zeta^{\bar{\eta}}_{t})$. Throughout this section we concentrate on the study of  $\zeta^{\bar{\eta}}_{t}$ and in particular on the study of its rightmost infected site $\bar{r}_{t}$,  the necessary association with $\zeta_{t}^{O}$ is provided by the next corollary. Recall that $\alpha= \alpha( \lambda, \mu)>0$ is the limit of the speed as in Theorem \ref{speedlim} and that $\mu_{c}$ is the contact process's critical value. 

\begin{cor}\label{asconv}
If $\mu\geq\lambda>0$ and $\mu>\mu_{c}$, then $\displaystyle{\frac{\bar{r}_{t}}{t} \rightarrow \alpha}$ almost surely.
\end{cor}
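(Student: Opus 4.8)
The plan is to transfer the law of large numbers for $\zeta_t^O$ supplied by Theorem \ref{speedlim} to $\zeta_t^{\bar\eta}$, using the half-line restart structure of Lemma \ref{bptsinf} from \cite{T}. As an orientation, note first that $\bar\eta \geq \eta_0$ componentwise, so Theorem \ref{moninit} gives $\zeta_t^{\bar\eta} \geq \zeta_t^O$ under the common graphical construction and hence $\bar r_t \geq r_t$ on $\{\mathcal{I}(\zeta_t^O) \neq \emptyset\}$; moreover, by the coupling argument of Lemma \ref{piprendcoup1} applied to $\eta' = \bar\eta$ (this is where the nearest-neighbour assumption and $\mu \geq \lambda$ enter), $\bar r_t = r_t$ on $\{\mathcal{I}(\zeta_t^O) \neq \emptyset\}$, so with Theorem \ref{speedlim} one already gets $\bar r_t/t \to \alpha$ almost surely \emph{on} $\{\zeta_t^O \text{ survives}\}$. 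The substance of the proof is to obtain the same conclusion on the complementary event, on which the descendants of the origin die out while $\zeta_t^{\bar\eta}$ keeps advancing.

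For this I would invoke Lemma \ref{bptsinf} directly, since its hypotheses coincide with those of the corollary. It produces an almost surely finite index $N = \inf\{n\geq1 : \rho_n = \infty\}$ (finiteness of $N$ being part of Proposition \ref{PROPexpbnd}, where $Y_N$ is shown to be exponentially bounded), together with almost surely finite $Y_N$ and $T_{Y_N}$, such that, by $(\ref{eq:algopiprendcoup})$ applied with $n = N$, $\bar r_t = \sup\mathcal{I}(\zeta_t^N)$ for all $t \geq T_{Y_N}$, where $\zeta^N = \zeta^{[\eta_{Y_N},T_{Y_N}]}$ is started from the single infected site $Y_N$ at time $T_{Y_N}$. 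Shifting space by $Y_N$ and combining $(\ref{cbpts1})$ and $(\ref{cbpts2})$ with the fact that $\{N = n,\ Y_n = w\}$ is $\mathcal{F}_{T_{Y_n}}$-measurable while $\{\rho_n = \infty\}$ depends only on the shifted process, one checks (as in the proof of Proposition \ref{PROPexpbnd}) that the shifted process $\big(\zeta_{T_{Y_N}+s}^N - Y_N\big)_{s\geq0}$ has the law of $(\zeta_s^O)_{s\geq0}$ conditioned on $\{\zeta_s^O \text{ survives}\}$. Hence $g(s) := \bar r_{T_{Y_N}+s} - Y_N = \sup\mathcal{I}\big(\zeta_{T_{Y_N}+s}^N - Y_N\big)$ equals, in law as a process in $s$, the rightmost infected of $\zeta_s^O$ under the conditioned measure, which satisfies $r_s/s \to \alpha$ almost surely by Theorem \ref{speedlim}; therefore $g(s)/s \to \alpha$ almost surely.

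It then remains to deduce convergence in $t$. Writing $t = T_{Y_N} + s$,
\[
\frac{\bar r_t}{t} \;=\; \frac{Y_N}{t} \;+\; \frac{g(t - T_{Y_N})}{t - T_{Y_N}}\cdot\frac{t - T_{Y_N}}{t},
\]
and since $Y_N$ and $T_{Y_N}$ are almost surely finite, as $t \to \infty$ the first term tends to $0$ and the second to $\alpha$, so $\bar r_t/t \to \alpha$ almost surely. The one step that needs genuine care, rather than routine manipulation, is the identification of the law of $\big(\zeta_{T_{Y_N}+\cdot}^N - Y_N\big)$ with that of $\zeta^O$ conditioned to survive; if one wished the present chapter to stand on its own and not cite \cite{T}, this is precisely the point where the half-line restart construction of Lemma \ref{bptsinf} and the geometric finiteness of $N$ would have to be reproduced, and that reproduction — not the limit computation above — would be the main obstacle.
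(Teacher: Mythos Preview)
Your proof is correct and follows essentially the same route as the paper: the paper's one-line argument cites exactly the restart construction of Lemma~\ref{bptsinf} together with the almost-sure finiteness of $Y_N$ and $T_{Y_N}$ from Proposition~\ref{PROPexpbnd}, which is precisely what you unpack. Two small comments: your opening case split on $\{\zeta_t^O\text{ survives}\}$ is unnecessary since the restart argument already covers the whole space, and your measurability claim should read that $\{N\geq n,\,Y_n=w\}=\{\rho_{n-1}<\infty,\,Y_n=w\}$ lies in $\mathcal{F}_{T_{Y_n}}$, with the extra conditioning on $\{\rho_n=\infty\}$ then corresponding to conditioning the shifted copy of $\zeta^O$ on survival.
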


\begin{proof}
This result is deduced from Theorem \ref{speedlim} by use of the restart argument in Lemma 2.14 (Lemma 4.4 in \cite{T}) , since $Y_{N}$ and $T_{Y_{N}}$ there are almost surely finite from Proposition 2.12 (Proposition 4.2 in \cite{T})) of the same chapter (paper). 
\end{proof}

For proving Theorem \ref{thmsup} we will need Corollary \ref{corsupexp} below, which in turn requires the succeeding lemma. 

We note that the next proof goes through varying the ideas of the corresponding result for the right endpoint of the contact process (see e.g.\ Theorem 2.19 in \cite{L}) in order for the subadditive ergodic theorem to apply in this context.

\begin{lem}\label{subadd}
Let $\bar{x}_{t} = \sup_{s\leq t} \bar{r}_{s}$. If $\mu\geq\lambda$, then $\displaystyle{ \frac{\bar{x}_{n}}{n} \rightarrow a}$ almost surely, where  $\displaystyle{  a =  \inf_{n\geq 0} \frac {\E(\bar{x}_{n})} {n}}$,  $a \in [-\infty,\infty)$. If further $a> -\infty$, then $\displaystyle{ \frac{\bar{x}_{n}}{n} \rightarrow a}$ in $L^{1}$. 
\end{lem}

\begin{proof}


Consider the graphical construction for $(\lambda,\mu)$ such that $\mu\geq\lambda$.  For each integer $y$, let $\eta_{y}$ be such that $\eta_{y}(x) =1$ for all $x\leq y$, and $\eta_{y}(x) =-1$ for all $x\geq y+1$. Consider the process $\zeta^{\bar{\eta}}_{t}$ and let $s,u$ be such that $s \leq u$, consider further the coupled process $\zeta_{t}^{[\eta_{\bar{x}_{s}},s]}$ and define 
\[
\bar{x}_{s,u} = \max\{y: \zeta_{t}^{[\eta_{\bar{x}_{s}},s]}(y)=1 \mbox{ for some } t \in [s,u] \} - \bar{x}_{s},
\] 
note that $\bar{x}_{0,u} = \bar{x}_{u}$, since $\bar{x}_{0} = 0$. By monotonicity in the initial configuration, see Theorem \ref{moninit}, we have that $\zeta_{t}^{[\eta_{\bar{x}_{s}},s]}(x) \geq \zeta^{\bar{\eta}}_{t}(x)$ for all $t\geq s$ and $x$, hence, 
\[
\mbox{(a) } \hspace{5mm} \bar{x}_{0,s} + \bar{x}_{s,u} \geq \bar{x}_{0,u}.
\]
However, by translation invariance and independence of Poisson processes used in the construction, we have that  $\bar{x}_{s,u}$ is equal in distribution to $\bar{x}_{0,u-s}$ and is independent of $\bar{x}_{0,s}$. 
From this, we get that,  
\[\mbox{(b)} \hspace{5mm} \{\bar{x}_{(n-1)k,nk},n \geq 1\} \mbox{ are i.i.d.\ for all }  k\geq1, \]
and hence are stationary and ergodic, and that,
\[\mbox{(c)} \hspace{5mm} \{\bar{x}_{m,m+k} , k \geq 0\} =  \{\bar{x}_{m+1,m+k+1} , k \geq 0\} \mbox{ in distribution,  for all } m\geq1.\]
Finally, by ignoring recovery marks in the construction and standard Poisson processes results gives that 
\[
\mbox{(d)} \hspace{5mm} \E(\max\{\bar{x}_{0,1},0\} ) < \infty,
\] 
From (a)--(d) above we have that $\{\bar{x}_{m,n}, m \leq n\}$ satisfies the corresponding conditions of Theorem 2.6, VI., in \cite{L}, this completes the proof. 
\end{proof}


\begin{cor}\label{corsupexp}
If $\mu\geq\lambda>0$ and $\mu>\mu_{c}$, then $\displaystyle{\frac{\E\bar{r}_{t}}{t} \rightarrow \alpha}$. 
\end{cor}
\begin{proof}
Consider the graphical construction for $(\lambda,\mu)$ such that $\mu\geq\lambda$ and $\mu>\mu_{c}$. Lemma \ref{subadd} gives that there is an $a>0$ such that $\displaystyle{ \frac{\bar{x}_{n}}{n} \rightarrow a}$ in $L^{1}$, where $a>0$ since from Corollary \ref{asconv} we have that $\alpha>0$, and $a \geq \alpha$ because $\bar{r}_{n} \leq \bar{x}_{n}$. Thus, $\displaystyle{ \bar{x}_{n} / n}$ are uniformly integrable by the direct part of the theorem in section 13.7 of Williams \cite{W}. 

Hence also because $\bar{r}_{n} \leq \bar{x}_{n}$ we have that $\displaystyle{ \bar{r}_{n} / n}$ are uniformly integrable, which, combined with Corollary \ref{asconv}, gives that $\displaystyle{ \frac{\bar{r}_{n}}{n}\rightarrow \alpha}$ in $L^{1}$ by appealing to the reverse part of the before-mentioned theorem. This implies in particular that $\displaystyle{\frac{\E\bar{r}_{n}}{n} \rightarrow \alpha}$. From this, the extension along real times follows by noting that $\max\limits_{t\in (n,n+1]}(\bar{r}_{t} -\bar{r}_{n})$ and  $\max\limits_{t\in (n,n+1]}(\bar{r}_{n+1} -\bar{r}_{t})$ are both bounded above in distribution by $\Lambda_{\mu}[0,1)$, the number of arrivals in $[0,1)$ of a Poisson process at rate $\mu$. 
\end{proof}


\begin{proof}[proof of Theorem \ref{thmsup}]
Consider the graphical construction for $\mu\geq\lambda$ and $\mu>\mu_{c}$.  Let $\mathcal{F}_{t}$ denote the sigma algebra associated to the Poisson processes used in the construction up to time $t$. Consider the process $\zeta^{\bar{\eta}}_{t}$, we follow the trajectory of the rightmost infected site, ${r}_{t}$, and consider the times $t$ such that $\bar{r}_{t} = \bar{x}_{t}$ and a $\mu-\lambda$ arrow from $\bar{r}_{t}$ to $\bar{r}_{t}+1$ exists, at each of those times we consider the set of infected sites of $\zeta_{t}^{\bar{\eta}}$ and initiate a coupled contact process with parameter $\mu$ having this as starting set. 
Let $\upsilon_{0}=0$, $\xi_{t}^{0}= \xi_{t}^{\Zm}$, where $\Zm = \{0,-1,\dots\}$, and $R_{t}^{0} = R_{t} = \sup \xi_{t}^{\Zm}$; for all $n\geq1$, consider 
\begin{equation}\label{eq:upsilons}
\upsilon_{n} = \inf\{t \geq \upsilon_{n-1}: R_{t}^{n-1}= \bar{r}_{t} + 1\},
\end{equation}
and then let $R_{t}^{n} = \sup \xi_{t}^{n}$, for $\xi_{t}^{n}:= \xi_{t}^{\I(\zeta^{\bar{\eta}}_{\upsilon_{n}}) \times \upsilon_{n}}$, $t\geq \upsilon_{n}$. 
Note that 
\begin{equation}\label{Rtn}
 \bar{r}_{t} = R_{t}^{n-1}, \mbox{ for all } t \in [\upsilon_{n-1},\upsilon_{n}), 
\end{equation}
and also
\begin{equation}\label{Hupsilsup}
\xi_{\upsilon_{n}}^{n-1} = \xi_{\upsilon_{n}}^{n} \cup \{\bar{r}_{\upsilon_{n}}+1\}, \mbox{ for all } n\geq1.
\end{equation}

Let $F_{t} = \sup\{n: \upsilon_{n} \leq t\}$. To complete the proof of Theorem \ref{thmsup} 
it is sufficient to show that
\begin{equation}\label{fracpunch}
\E(F_{t})= \frac{\mu-\lambda}{\lambda} \E(\bar{x}_{t})
\end{equation}
and that 
\begin{equation}\label{RFt}
\E(R_{t}-\bar{r}_{t}) \geq \E F_{t}
\end{equation}
$t \geq 0$. To see this, note that (\ref{fracpunch}) implies that $\displaystyle{ \E(F_{t}) \geq \frac{\mu-\lambda}{\lambda} \E(\bar{r}_{t})}$, because $\bar{x}_{t} \geq \bar{r}_{t}$. This combined with (\ref{RFt}) gives that $\displaystyle{ \E \bar{r}_{t} \leq \frac{\lambda}{\mu}\mbox{ } \E R_{t}}$, which  implies the desired inequality by Corollary \ref{corsupexp}. 

 
%


We first prove (\ref{RFt}). Recall that $1_{E}$ denotes the indicator of event $E$. From $(\ref{Rtn})$ we have that, $R^{0}_{t}-\bar{r}_{t} = \sum_{n=1}^{\infty}(R^{n-1}_{t}-R_{t}^{n}) 1_{\{F_{t} \geq n\}}$. Thus, because by monotonicity of the contact process $R_{t}^{n-1} \geq R_{t}^{n}$, the monotone convergence theorem gives us that  
\[
\E(R_{t}-\bar{r}_{t}) = \sum_{n=1}^{\infty}\E\left( (R^{n-1}_{t}-R_{t}^{n}) 1_{\{F_{t} \geq n\}} \right),
\]
$t\geq0$. From the Strong Markov Property, because ${\{F_{t} \geq n\}} = \{\upsilon_{n} \leq t\} \in \mathcal{F}_{\upsilon_{n}}$,
and Lemma \ref{thmpresup}, which applies from $(\ref{Hupsilsup})$, we have that 
\[
\E\left( (R^{n-1}_{t}-R_{t}^{n}) 1_{\{F_{t} \geq n\}} \right) \geq \pr(F_{t} \geq n).
\]
Combining the two last displays above gives (\ref{RFt}).

For proving (\ref{fracpunch}) some extra work is necessary. Recall the setting of the graphical construction in Section \ref{prel}. Let $\tilde{T}_{1}:= T_{1}^{0,1}$, $\tilde{S}_{1}:= S_{1}^{0}$, $\tilde{U}_{1}:= U_{1}^{0,1}$ and also 
define the events $A_{1} = \{\min\{\tilde{T}_{1}, \tilde{S}_{1}, \tilde{U}_{1}\} = \tilde{U}_{1}\}$ and $B_{1}= \{\min\{\tilde{T}_{1}, \tilde{S}_{1}, \tilde{U}_{1}\}=\tilde{T}_{1}\}$. At time $\tau_{0}:= 0$ the first competition takes place in the sense that on $A_{1}$, $\bar{r}_{\tilde{U}_{1}}= 0$ and $R_{\tilde{U}_{1}}^{0}=1$, hence $\upsilon_{1} = \tilde{U}_{1}$; while on $B_{1}$, $\bar{r}_{\tilde{T}_{1}} = \bar{x}_{\tilde{T}_{1}}=1$. By inductively repeating this idea we have the following. For all $n\geq1$, consider
\[
\tau_{n} = \inf\{ t\geq \min\{\tilde{T}_{n}, \tilde{S}_{n}, \tilde{U}_{n}\}: \bar{r}_{t} = \bar{x}_{t}\}, 
\]
and let also $\tilde{T}_{n+1}=\inf\limits_{k\geq1}\{T_{k}^{(\bar{r}_{\tau_{n}},\bar{r}_{\tau_{n}}+1)}:T_{k}^{(\bar{r}_{\tau_{n}},\bar{r}_{\tau_{n}}+1)}> \tau_{n}\}$, the first time a $\lambda$-arrow exists from $\bar{r}_{\tau_{n}}$ to $\bar{r}_{\tau_{n}}+1$  after $\tau_{n}$, and $\tilde{U}_{n+1} = \inf\limits_{k\geq1}\{U_{k}^{(\bar{r}_{\tau_{n}},\bar{r}_{\tau_{n}}+1)}: U_{k}^{(\bar{r}_{\tau_{n}},\bar{r}_{\tau_{n}}+1)}> \tau_{n}\}$,  the first such time a $(\mu-\lambda)$-arrow exists, and further $\tilde{S}_{n+1} =\inf\limits_{k\geq1} \{S_{k}^{\bar{r}_{\tau_{n}}}: S_{k}^{\bar{r}_{\tau_{n}}} >  \tau_{n}\}$, the first time that a recovery mark exists on $\bar{r}_{\tau_{n}}$ after $\tau_{n}$. Define also the events $A_{n+1}:=  \{\tilde{U}_{n+1} < \min\{\tilde{T}_{n+1},\tilde{S}_{n+1}\}\}$ and $B_{n+1}:= \{ \tilde{T}_{n+1} < \min\{ \tilde{U}_{n+1}, \tilde{S}_{n+1}\}\}$. In the sense explained above, we analogously have that at time $\tau_{n}$ the $n+1$ competition takes place. 

Let $N_{t} = \sup\{n: \tau_{n} <t\}$, we have that $\bar{x}_{t} = \sum\limits_{n=1}^{N_{t}}1_{B_{n}}$ and, because $\upsilon_{n}$ can also be expressed as the first $\tilde{U}_{k}$ after $\upsilon_{n-1}$ such that $\tilde{U}_{k} < \min\{ \tilde{T}_{k}, \tilde{S}_{k}\}$, we also have that $F_{t} = \sum\limits_{n=1}^{N_{t}} 1_{A_{n}}$. However, by ignoring recovery marks, $R_{t}$ is bounded above (in distribution) by $\Lambda_{\mu}[0,t)$,  the number of arrivals over $[0,t)$ of a Poisson process at rate $\mu$, and $\bar{x}_{t}$ is bounded above by $\Lambda_{\lambda}[0,t)$, while also $D_{t}$, the total number of recovery marks appearing on the trajectory of the rightmost infected site by time $t$, is bounded above by $\Lambda_{1}[0,t)$. Hence, noting that $N_{t} \leq R_{t}+ \bar{x}_{t}+ D_{t}$ and elementary Poisson processes properties, we have that $\E(N_{t})<\infty$. From this, by the Strong Markov Property and emulating the proof of Wald's lemma, since conditional on $\zeta^{\bar{\eta}}_{\tau_{n}}$ the events $A_{n+1}$ and $B_{n+1}$ are independent of $\{N_{t} \geq n+1\}= \{N_{t} \leq n\}^{c} \in \mathcal{F}_{\tau_{n}}$, we have that $\displaystyle{\E(F_{t}) = \E(N_{t}) \frac{\mu-\lambda}{\mu+1}}$ and also that $\displaystyle{\E(\bar{x}_{t}) = \E(N_{t})\frac{\lambda}{\mu+1}}$ from elementary results for competing Poisson processes. The last two equalities imply (\ref{fracpunch}), hence completing the proof. 
\end{proof}

\chapter{A note on Mountford and Sweet's extension of Kuczek's argument to non-nearest neighbours contact processes}

\paragraph{Abstract:} An elementary proof of the i.i.d.\ nature of the growth of the right endpoint for contact processes on the integers with symmetric, translation invariant interaction is presented. A related large deviations result for the density of oriented percolation is also given. 
\vfill

\section{Introduction}


The central limit theorem for the right endpoint of the contact process on the integers with nearest neighbours interaction (in other words, the basic one-dimensional contact process) was established in Galves and Presutti \cite{GP}. An alternative proof was later given in Kuczek \cite{K}. The seminal argument invented there is the embedding of regenerative space-time points, termed break points, on the trajectory of the right endpoint. By adapting Kuczek's argument and creation of a block construction that uses ideas from Bezuidenhout and Grimmett \cite{BG}, the result was extended to one-dimensional non-nearest neighbours contact processes in Mountford and Sweet \cite{MS}. The key to the extension, Theorem 3 in \cite{MS}, is that with positive probability the right endpoint of the process started from the origin is not overtaken from the right endpoint of the process started from all sites left of the origin for all times. 
%



In Section \ref{Scp} we aim at giving a short and complete proof of this theorem for contact processes on the integers with symmetric, translation invariant interaction; this result is then shown to be sufficient for giving an elementary proof of the i.i.d.\ behaviour of the right endpoint by a simple restart argument and the adaptation of Kuczek's argument in \cite{MS}. As a byproduct of an intermediate step for the former proof we see that whenever the shape theorem for the contact process holds, there is a positive probability that the process started from all sites and the process started from any finite set to agree on this set for all times. It is also worth stressing that the approach for showing the i.i.d.\ behaviour result does not require any renormalization arguments other than those in Durrett and Schonmann \cite{DS} used for the proof of the shape theorem, and further that, in order to extend the result to the central limit theorem the exponential estimate concerning the time of occurrence of a break point (Lemma 6 in \cite{MS}) is necessary. 



%
%



In Section \ref{Sperc}, we observe that a simple consequence of the result of Durrett and Schonmann \cite{DS2} for oriented percolation is a sharpened large deviations result than the one that the block construction in \cite{MS} builds upon, and remark on that the corresponding large deviations result for contact processes can be obtained in a simple manner.

\section{Contact processes}\label{Scp} 
The \textit{contact process} on a graph $G=(V,E)$ is a continuous time Markov process  $\xi_{t}$ whose state space is the set of subsets of $V$. Regarding each site in $\xi_{t}$ as occupied by a particle and all other sites as vacant, the process at rate $\mu$ evolves according to the following local prescription: (i) Particles die at rate 1. (ii) A particle at site $x$ gives birth to new ones at each site $y$ such that $xy \in E$ at rate $\mu$. (iii) There is at most one particle per site, that is, particles being born at a site that is occupied coalesce for all subsequent times.
Thus $\xi_{t}$ can be thought of as the particles descending from the sites in $\xi_{0}$. The contact process was first introduced in Harris \cite{H74} and has been widely studied since then; an up-to-date account of main results and proofs can be found in Liggett \cite{L99}. Let us denote by $\mu_{c}(G)$ the critical value of the contact process on $G$, that is $\mu_{c}(G) = \inf\{\mu: \pr(\xi_{t} \not=\emptyset, \mbox{ for all } t)>0\}$, where $\xi_{t}$ is the contact process on $G$ started from any $\xi_{0}$ finite, $\xi_{0} \subset V$. We note that throughout the proofs of this section we make extensive use of the construction of contact processes from the graphical representation, the reader is then assumed to be familiar with that and standard corresponding terminology (see \cite{D91} or \cite{L99}). 

We will consider the collection of simple graphs $Z_{M}$, $M\geq1$, where $M$ is a finite integer and $Z_{M}$ is the graph with set of vertices the integers, $\Z$, for which pairs of sites at Euclidean distance not greater than $M$ are connected by an edge. We shall also consider the related  collection of graphs $\Zmg_{M}$, $M\geq1$, where $\Zmg_{M}$ is the graph with set of vertices $\Zm :=\{0,-1,\dots\}$ obtained by $Z_{M}$ by retaining only edges connecting sites in $\Zm$. 






Firstly, the shape theorem for contact processes on $\Zmg_{M}, M\geq1,$ is stated, the result is a consequence of Durrett and Schonmann \cite{DS1}. Let us denote by $1(\cdot)$ the indicator function. 

\begin{thm}\label{SHAPE1}
Let $\hat{\xi}^{\Zm}_{t}$ and $\hat{\xi}_{t}^{F}$ denote the contact processes on $\Zmg_{M}, M\geq1$, at rate $\mu$ started from $\Zm$ and $F$ respectively. For all $M$, 
if $\mu>\mu_{c}(\Zmg_{M})$ and $F$ is finite then there is an $a>0$ such that 
the set of sites $x$ such that $1(x \in \hat{\xi}_{t}^{F})  = 1(x \in \hat{\xi}^{\Zm}_{t})$ contains $[-a t,0] \cap \Zm$ eventually, almost surely on $\{\hat{\xi}_{t}^{F} \not= \emptyset, \mbox{ for all } t\}$.
\end{thm}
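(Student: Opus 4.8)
The plan is to deduce Theorem~\ref{SHAPE1} from the shape theorem for one-dimensional finite-range supercritical contact processes, combined with the coupling $\hat{\xi}_t^F\subseteq\hat{\xi}_t^{\Zm}$ supplied by monotonicity of the graphical construction. I will use two consequences of the shape theorem, stated for the process on the half-line graph $\Zmg_M$: writing $l_t=\inf\hat{\xi}_t^F$, conditionally on $\{\hat{\xi}_t^F\not=\emptyset\text{ for all }t\}$ there is a constant $\alpha>0$ (the asymptotic speed of the left edge) such that
\begin{equation}\label{eq:shapespeed}
\frac{l_t}{t}\longrightarrow-\alpha\qquad\text{almost surely},
\end{equation}
and, for every $\epsilon>0$, eventually
\begin{equation}\label{eq:bulkagree}
\hat{\xi}_t^F\cap[-(\alpha-\epsilon)t,0]=\hat{\xi}_t^{\Zm}\cap[-(\alpha-\epsilon)t,0].
\end{equation}
Here the interval is capped at $0=\sup\Zm$ because there are no sites to the right of the origin, so the left edge is the only edge to be tracked and the ``bulk'' on which the two processes are forced into agreement is the whole of $[-(\alpha-\epsilon)t,0]$. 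Passing from a single site to a finite set $F$ in (\ref{eq:shapespeed})--(\ref{eq:bulkagree}) is routine: conditionally on survival the line of descent of some $y_0\in F$ survives, and $\hat{\xi}_t^{\{y_0\}}\subseteq\hat{\xi}_t^F\subseteq\hat{\xi}_t^{\Zm}$, the two extremes obeying the shape theorem with the same $\alpha$.

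Granting (\ref{eq:shapespeed})--(\ref{eq:bulkagree}), the theorem is immediate. Put $a:=\alpha/2$ and apply (\ref{eq:bulkagree}) with $\epsilon:=\alpha/2$, so $\alpha-\epsilon=a$. By (\ref{eq:shapespeed}), almost surely on survival $\inf_{s\leq t}l_s\leq l_t\leq-at$ for all large $t$; for such $t$ every $y\in[-at,0]\cap\Zm$ then satisfies $y\geq\inf_{s\leq t}l_s$, and by (\ref{eq:bulkagree}) it also satisfies $1(y\in\hat{\xi}_t^F)=1(y\in\hat{\xi}_t^{\Zm})$. Thus the set in the statement contains $[-at,0]\cap\Zm$ for all large $t$, which is the assertion. (The conditioning on survival is essential: if $\hat{\xi}^F$ dies out then $\inf_{s\leq t}l_s$ stabilises at a finite value while $\hat{\xi}_t^{\Zm}$ keeps a positive density near it, so agreement on $[-at,0]$ fails.)

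It remains to obtain (\ref{eq:shapespeed})--(\ref{eq:bulkagree}) on $\Zmg_M$ in place of the full line $Z_M$, and this is where the only real work lies --- it is the content of the assertion that Theorem~\ref{SHAPE1} ``is a consequence of \cite{DS1}''. The renormalization behind the shape theorem there is spatially local, hence transcribes to $\Zmg_M$: space--time blocks disjoint from a fixed neighbourhood of the origin behave exactly as on $Z_M$, whereas blocks abutting the origin are only more favourable (deleting sites removes exits but creates none), so the comparison with supercritical oriented percolation survives and the left-edge speed $\alpha>0$ it produces --- read off from blocks far from the origin --- equals the full-line value. The renormalised picture gives (\ref{eq:shapespeed}) and simultaneously sandwiches $\hat{\xi}_t^F$, within its own cone, between the oriented-percolation clusters issued from a single block and from all of $\Zm$; since for oriented percolation near density $1$ the set of sites wet from the whole line but not from a single block lies within $o(n)$ of the left edge of the latter, (\ref{eq:bulkagree}) follows. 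The point needing care is exactly this transfer: that the renormalization of \cite{DS1}, and its agreement-in-the-bulk output, is undisturbed by the boundary at $0$ and yields the interval capped at the origin as in (\ref{eq:bulkagree}); the remaining steps above are bookkeeping.
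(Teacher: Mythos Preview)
Your proposal is correct and follows essentially the same route as the paper: both reduce the claim to the Durrett--Schonmann renormalized bond construction and the Section~6 arguments of \cite{DS1} for a single starting site, and then pass to a general finite $F$ by additivity. The paper's own proof is simply a two-sentence citation of \cite{DS1}; your write-up unpacks what that citation delivers (edge speed \eqref{eq:shapespeed} and bulk agreement \eqref{eq:bulkagree} on the half-line) and does the bookkeeping explicitly, but the underlying argument is the same.
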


\begin{proof}
It suffices to consider the case that $F=\{0\}$, the arguments given will be seen to apply for any $F=\{x\}$. Then, extension to all finite sets $F$ is immediate by additivity. To simplify the notation let us write $\hat{\xi}_{t}(x)$ for $1(x \in \hat{\xi}_{t})$. From the renormalized bond construction and the arguments of section 6 in \cite{DS1} we have that there are $C, \gamma \in (0,\infty)$ so that, for any $x \geq -at$,
\begin{equation}\label{fds}
\pr(\hat{\xi}_{t}^{0}(x) \not= \hat{\xi}^{\Zm}_{t}(x), \hat{\xi}_{t}^{0} \not= \emptyset) \leq Ce^{-\gamma t}
\end{equation}
$t\geq0$. Note that for integer times the result follows from (\ref{fds}) and the 1st Borel-Cantelli lemma since $\sum\limits_{n\geq1} \pr\big(\bigcup_{x\geq -an} \hat{\xi}_{n}^{0}(x) \not= \hat{\xi}^{\Zm}_{n}(x) | \hspace{1mm}  \hat{\xi}_{t}^{0} \not= \emptyset, \mbox{ for all } t\big) < \infty$, where we first used that  $\pr(\hat{\xi}_{t}^{0} \not= \emptyset \cap  \{\hat{\xi}_{t}^{0} \not= \emptyset, \mbox{ for all } t\}^{c})$ is exponentially bounded in $t$, the last standard result is proved by a standard argument, see e.g.\ the proof of Theorem 2.30 (a) in \cite{L99}. 

To obtain the result for real times, for any site $x$ let $B_t^{x}$ denote the event that $\bigcup_{t\in (n,n+1]} \{\hat{\xi}_{t}^{0}(x) \not= \hat{\xi}^{\Zm}_{t}(x) , \hat{\xi}_{t}^{0} \not= \emptyset\}$, and note that, 
\begin{equation}\label{BteM}
\textstyle{ \pr(B_t^{x}) e^{-2M\mu - 2} \leq  \pr(\hat{\xi}_{n+1}^{0}(x) \not= \hat{\xi}^{\Zm}_{n+1}(x),  \hat{\xi}_{n+1}^{0} \not= \emptyset)}
\end{equation}
where this inequality follows from the Strong Markov Property by letting $t_{0}$ be the first time such that $B_{t}^{x}$ occurs and considering the event that: (i) no particles attempt to occupy $x$ during $[t_{0},t_{0}+1]$, (ii) the particle of $\hat{\xi}^{\Zm}_{t_{0}}$ at $x$ does not die until $t_{0}+1$ and, (iii) one particle of $\hat{\xi}_{t}^{0}$ does not die until $t_{0}+1$. From (\ref{BteM}) combined with (\ref{fds}), the result follows as in the discrete time case by simply noting that the event $\left\{ \exists \hspace{0.3mm} t_{m} \uparrow \infty:\bigcup_{x\geq -at_{m}} \hat{\xi}_{t_{m}}^{0}(x) \not= \hat{\xi}^{\Zm}_{t_{m}}(x)\right\}$ can also be written as $\left\{\exists \hspace{0.3mm} n_{k}  \uparrow \infty: \bigcup_{t\in (n_{k},n_{k}+1]} \bigcup_{x\geq -at} \{\hat{\xi}_{t}^{0}(x) \not= \hat{\xi}^{\Zm}_{t}(x)\}\right\}$. 
\end{proof}


The foregoing shape theorem plays a pivotal role in establishing the next result that will be central in the proof of the main theorem of this section, viz. Theorem \ref{Rr}. We believe this to be of independent interest (see also Remark \ref{remark1}).

%
\begin{prop}\label{nnnkuc1} 
Let $\hat{\xi}^{\Zm}_{t}$ and $\hat{\xi}_{t}^{F}$ denote the contact processes  on  $\Zmg_{M}, M\geq1$, at rate $\mu$ started from $\Zm$ and $F$ respectively. For all $M$, if $\mu>\mu_{c}(\Zmg_{M})$ and $F$ is finite then  $\big\{\hat{\xi}_{t}^{\Zm} \cap F = \hat{\xi}_{t}^{F} \cap F , \mbox{\textup{ for all }}t \big\}$ has positive probability.
\end{prop}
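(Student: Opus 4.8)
The plan is to deduce the claim from the shape theorem on $\Zmg_M$, Theorem \ref{SHAPE1}, together with self-duality of the contact process. The intuition is that $\hat\xi_t^F \subseteq \hat\xi_t^{\Zm}$ always holds by monotonicity of the graphical construction, so the only way the two processes can disagree on the finite set $F$ is for some site of $F$ to be infected in $\hat\xi_t^{\Zm}$ via a path that originates outside $F$ and cannot be matched by a path from $F$. The shape theorem tells us that on the survival event $\{\hat\xi_t^F \not=\emptyset \text{ for all }t\}$, for any fixed $a>0$ small enough, the processes $\hat\xi_t^F$ and $\hat\xi_t^{\Zm}$ eventually agree on $[-at,0]\cap\Zm$ above the running minimum $\inf_{s\le t} l_s$; since $F$ is finite and $l_t$ drifts to $-\infty$ (indeed linearly), $F$ is contained in $[\inf_{s\le t}l_s, 0]$ for all large $t$, so the two processes agree on $F$ for all sufficiently large $t$ on a set of positive probability.

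First I would fix $M$ and $\mu>\mu_c(\Zmg_M)$ and assume without loss of generality that $F \subseteq \Zm$ with $0 \in F$ (if $F$ contains positive sites or does not meet $\Zm$ the statement is vacuous or reduces to this case by monotonicity and the fact that paths stay in $\Zm$). By additivity and monotonicity, $\hat\xi_t^F \subseteq \hat\xi_t^{\Zm}$ for all $t$, so it suffices to show that with positive probability, for \emph{all} $t$, no site of $F$ lies in $\hat\xi_t^{\Zm}\setminus\hat\xi_t^F$. Next I would invoke Theorem \ref{SHAPE1}: on the event $G:=\{\hat\xi_t^F\not=\emptyset \text{ for all }t\}$, which has positive probability since $\mu>\mu_c(\Zmg_M)$, there is a (deterministic) $a>0$ such that almost surely on $G$ the agreement set contains $[-at,0]\cap\Zm$ eventually. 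Because $l_t \le 0$ and, by the shape theorem applied to the left edge, $l_t/t \to -a'$ for some $a'>0$ on $G$, the running minimum $\inf_{s\le t} l_s$ also tends to $-\infty$; hence for every fixed finite $F$ there is a random time $\tau < \infty$ (a.s. on $G$) such that $F \subseteq [\inf_{s\le t}l_s, 0]\cap\Zm$ and $F$ lies in the agreement set for all $t \ge \tau$. This already gives agreement on $F$ for all $t\ge\tau$ on $G$.

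The remaining obstacle — and I expect this to be the main point requiring care — is upgrading ``agreement for all $t\ge\tau$ a.s. on $G$'' to ``agreement for \emph{all} $t\ge 0$ with positive probability''. For this I would argue as follows: since $\tau<\infty$ a.s.\ on $G$ and $\pr(G)>0$, there is a deterministic time $t_0$ with $\pr(G \cap \{\tau \le t_0\})>0$; on this event we have agreement on $F$ for all $t\ge t_0$. To control the finite time window $[0,t_0]$, I would further intersect with the event $H$ that, up to time $t_0$, every path in the graphical construction that reaches a site of $F$ starts inside $F$ — equivalently, that no arrow enters the finite ``space-time box'' $F\times[0,t_0]$ from outside $F$, and more precisely that the restriction of the graphical data relevant to $F$ on $[0,t_0]$ is ``self-contained.'' Since $F$ is finite and $t_0$ fixed, the event that no arrow from $\Zm\setminus F$ into $F$ occurs before time $t_0$ has positive probability, and on $H$ one has $\hat\xi_t^{\Zm}\cap F = \hat\xi_t^F\cap F$ for $t\le t_0$ directly. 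The delicate step is that $H$ and $G\cap\{\tau\le t_0\}$ are not obviously positively correlated, so instead I would run the shape-theorem argument \emph{restarted} after time $t_0$: on $H$, at time $t_0$ the configurations $\hat\xi_{t_0}^{\Zm}$ and $\hat\xi_{t_0}^F$ agree on $F$, and by the Markov property and translation invariance in time one may re-apply Theorem \ref{SHAPE1} to the processes started at time $t_0$, obtaining positive probability of continued agreement on $F$ thereafter; conditioning $H$ (positive probability, $\mathcal F_{t_0}$-measurable) on this subsequent positive-probability event and using independence of the post-$t_0$ graphical data from $\mathcal F_{t_0}$ gives the full claim. I would present this restart as the key mechanism, mirroring the ``simple restart argument'' the paper repeatedly uses elsewhere.
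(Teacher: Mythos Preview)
Your opening paragraph is fine and matches the paper: by Theorem \ref{SHAPE1}, on the survival event the two processes eventually agree on any fixed finite $F$, so for some deterministic $n_0$ the event $B_{n_0}=\{\hat\xi_t^{\Zm}\cap F=\hat\xi_t^F\cap F\text{ for all }t\ge n_0\}$ has positive probability. The difficulty, as you correctly isolate, is to pass from $B_{n_0}$ to $B_0$.

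Your mechanism for this step is circular. You condition on the event $H=\{\text{no arrows from }\Zm\setminus F\text{ into }F\text{ on }[0,t_0]\}$, which indeed forces agreement on $F$ for $t\le t_0$. But then, to obtain agreement for $t\ge t_0$, you propose to ``re-apply Theorem \ref{SHAPE1} to the processes started at time $t_0$, obtaining positive probability of continued agreement on $F$ thereafter.'' Theorem \ref{SHAPE1} does \emph{not} give agreement for all $t\ge t_0$; it gives agreement \emph{eventually}. So after restarting you are in exactly the same position as before, with a new random time beyond which agreement holds and a new finite window to bridge. The restart buys nothing. (There is also the secondary issue that at time $t_0$ the initial configurations are $\hat\xi_{t_0}^{\Zm}$ and $\hat\xi_{t_0}^F$, not $\Zm$ and $F$, so Theorem \ref{SHAPE1} does not literally re-apply; but the circularity is the real obstruction.)

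The paper resolves the finite window by a different and sharper surgery: rather than suppressing arrows \emph{into} $F$, it suppresses \emph{recovery marks on} $F\times(0,n_0]$. Let $D$ be that event and let $B_{n_0}'$ be the set of realizations that agree, off the recovery marks in $F\times(0,n_0]$, with some realization in $B_{n_0}$. Then $B_{n_0}'$ depends only on the Poisson processes other than recoveries on $F\times(0,n_0]$, so $B_{n_0}'$ and $D$ are independent, giving
\[
\pr(B_{n_0}'\cap D)=\pr(B_{n_0}')\pr(D)\ge \pr(B_{n_0})\,e^{-|F|n_0}>0.
\]
On $D$ every site of $F$ stays infected in both processes throughout $[0,n_0]$, so agreement on $F$ holds there trivially. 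The crucial point---which your arrow-blocking event $H$ does not provide---is that deleting recovery marks on $F$ \emph{preserves} the post-$n_0$ agreement: any site contributing to a discrepancy at time $n_0$ in the modified realization must be reached by a path avoiding $F\times[0,n_0]$ (else it is already reachable from $F$, since $F$ is fully occupied), hence was already a discrepancy source in the original realization, contradicting $\omega\in B_{n_0}$. Thus $B_{n_0}'\cap D\subseteq B_0$, and the proof is complete without any restart.
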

\begin{proof} Fix $M$ and $F$ finite. Let $\mu>\mu_{c}(\Zmg_{M})$ and consider the processes $\hat{\xi}^{\Zm}_{t}$ and $\hat{\xi}_{t}^{F}$ constructed by the same graphical representation. Let $B_{n}$ denote the event $\{ \hat{\xi}^{\Zm}_{s} \cap F =\hat{\xi}^{F}_{s} \cap F, \mbox{ for all } s \geq n \}$, for all integers $n\geq0$.  

We give some notation. A realization of the graphical representation is typically denoted by $\omega$ and, we write that for all $\omega \in E_{1}$, $\omega \in E_{2}$ a.e.\ for denoting that $\pr(\{\omega: \omega \in E_{1}, \omega \not\in E_{2}\}) = 0$, where a.e.\ is an abbreviation for "almost everywhere" (on $E_{1}$). 

Theorem $\ref{SHAPE1}$ states that for all $\omega \in \{\hat{\xi}_{t}^{F} \not= \emptyset, \mbox{ for all } t\}$ there is an $s_{0}$ such that $\omega \in \{\hat{\xi}^{\Zm}_{s} \cap [-a s,0] = \hat{\xi}^{F}_{s} \cap [-a s,0] \mbox{, for all }s\geq s_{0}\}$ a.e.. Thus also, since $[-a s,0] \supset F$ for all $s$ sufficiently large,  for all $\omega \in \{\hat{\xi}_{t}^{F} \not= \emptyset, \mbox{ for all } t\}$ there is an $s_{1}$ such that $\omega \in  B_{\lceil s_{1} \rceil}$ a.e., where $\lceil s_{1} \rceil$ denotes the smallest integer greater than $s_{1}$.  Hence $\displaystyle{\pr\left(\cup_{n\geq 0} B_{n}\right) = \pr(\hat{\xi}_{t}^{F} \not= \emptyset, \mbox{ for all } t)>0}$, where the right side is strictly positive because $\mu>\mu_{c}(\Zmg_{M})$. From this we have  (e.g.\ by contradiction) that there is $n_{0}$ for which $\pr( B_{n_{0}}) >0$. We show that the last conclusion implies that $\pr(B_{0})>0$, this completes the proof.

Let $B_{n_{0}}'$ denote the event such that $\omega'\in B_{n_{0}}'$ if and only if there exists $\omega\in B_{n_{0}}$ such that  $\omega$ and $\omega'$ are identical realizations except perhaps from any $\delta$-symbols (death events) in $F \times (0,n_{0}]$. Further, let $D$ denote the event that no $\delta$-symbols exist in $F \times (0,n_{0}]$. 
By independence of the Poisson processes in the graphical representation and then because $B_{n_{0}}' \supseteq B_{n_{0}}$, we have that
\begin{eqnarray*}
\pr(B_{n_{0}}' \cap D) &=& \pr(B_{n_{0}}') \pr(D) \nonumber \\
&\geq& \pr(B_{n_{0}}) e^{-|F| n_{0}} >0,  
\end{eqnarray*}
where $|F|$ denotes the cardinality of $F$, because $B_{0} \supseteq B_{n_{0}}' \cap D$ the proof is completed from the last display. To prove that $B_{0} \supseteq B_{n_{0}}' \cap D$, note that if $\omega$ and $\omega'$ are identical except that $\omega'$ does not contain any $\delta$-symbols that possibly exist for $\omega$ on $F \times (0,n_{0}],$ then $\omega \in B_{n_{0}}$ implies that $\omega'\in B_{n_{0}}$ and indeed $\omega'\in B_{0}$. 
\end{proof}


\begin{remark}\label{remark1}
\textup{The arguments of the preceding proof readily apply in order to obtain the analogue of Proposition \ref{nnnkuc1} for the contact process on graphs such that the shape theorem holds. (Most prominent example is $\Z^{d}$, see \cite{BG}, \cite{D91}). Further, an explicit proof of the concluding sentence in the proof of Theorem 3 in \cite{MS} can be obtained by an argument along the lines of that given in the final paragraph of the preceding proof.}
\end{remark}









The next statement is the other ingredient we shall need in our proof. It is a consequence of the construction of Durrett and Schonmann \cite{DS1} comparison result, we also note that the result first appeared in Durrett and Griffeath \cite{DG} for the nearest neighbours case (see (b) in Section 2) .


\begin{thm}\label{muM}
For all $M$, $\mu_{c}(Z_{M})= \mu_{c}(\Zmg_{M})$.  
\end{thm}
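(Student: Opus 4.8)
The plan is to prove the two inequalities separately; the inequality $\mu_{c}(Z_{M}) \leq \mu_{c}(\Zmg_{M})$ is elementary, while the reverse one is where the cited comparison result does the work.

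First I would dispose of $\mu_{c}(Z_{M}) \leq \mu_{c}(\Zmg_{M})$. Since $\Zmg_{M}$ is obtained from $Z_{M}$ by discarding the vertices $\{1,2,\dots\}$ together with all incident edges, any graphical representation for the rate-$\mu$ contact process on $Z_{M}$ restricts to one for the process on $\Zmg_{M}$, and under this coupling the process on $\Zmg_{M}$ started from a finite $A \subseteq \Zm$ is contained in the one on $Z_{M}$ started from $A$ at every time, simply because fewer arrows are available. Hence survival on $\Zmg_{M}$ at a given $\mu$ forces survival on $Z_{M}$ at the same $\mu$, so the supercritical parameter set of $\Zmg_{M}$ is contained in that of $Z_{M}$, which is the claimed inequality.

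The substance is the reverse inequality $\mu_{c}(\Zmg_{M}) \leq \mu_{c}(Z_{M})$. Fix $\mu > \mu_{c}(Z_{M})$ and appeal to the Bezuidenhout--Grimmett comparison \cite{BG}: in the supercritical regime the contact process on $Z_{M}$ dominates a supercritical oriented percolation by a renormalised block construction in which each block consults only a bounded space-time region, say $[-W,W]\times[0,T]$ up to translation, and whose feeding and output conditions concern only sites in a bounded interval. The point I would stress is \emph{locality in space}: to certify mere survival --- as opposed to the two-sided shape theorem --- it suffices that each open block reproduces into at least one neighbouring block, and one is always free to take that neighbour to the left, so that the resulting oriented percolation has all of its blocks marching towards $-\infty$. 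Now translate the whole construction leftwards by a single fixed amount, chosen so large that for every block the spatial interval it uses, enlarged by the range-$M$ buffer that the graphical dynamics of $Z_{M}$ actually inspect, lies entirely inside $\Zm$. On any such region the graphical representation of the contact process on $Z_{M}$ and that on $\Zmg_{M}$ are literally the same --- the extra edges of $Z_{M}$ issuing from the sites $1,2,\dots$ are never looked at --- so the shifted block construction is equally valid for the process on $\Zmg_{M}$. Therefore the contact process on $\Zmg_{M}$ survives for every $\mu > \mu_{c}(Z_{M})$, i.e.\ $\mu_{c}(\Zmg_{M}) \leq \mu_{c}(Z_{M})$, and combining with the first step completes the proof.

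I expect the only real obstacle to be bookkeeping: verifying that the spatial footprint of the block construction, together with its range-$M$ buffer, can be confined to $\Zm$ after one global shift, and invoking the finite-range form of the Bezuidenhout--Grimmett comparison --- whose validity, as noted in the introduction of this chapter, rests on the usual self-duality remarks for finite-range contact processes. No new probabilistic ingredient beyond the cited comparison is needed; in the nearest-neighbour case the argument is already implicit in Durrett and Griffeath \cite{DG}.
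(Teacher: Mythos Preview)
Your overall plan matches the paper's: the paper does not give its own proof of this theorem but simply records it as a known consequence of the Bezuidenhout--Grimmett comparison (with the nearest-neighbour case credited to Durrett--Griffeath), so there is nothing to compare line by line. The easy inequality $\mu_c(Z_M)\le\mu_c(\Zmg_M)$ is exactly as you say.

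There is, however, a real gap in your sketch of the hard direction. You write that for mere survival ``it suffices that each open block reproduces into at least one neighbouring block, and one is always free to take that neighbour to the left.'' If you keep only the edges $(y,n)\to(y-1,n+1)$ in the renormalised oriented percolation, what remains is a single non-branching chain: survival along it requires \emph{every} block on the diagonal $(0,0),(-1,1),(-2,2),\dots$ to be open, and since each is open with probability at most some $p<1$ (the events being at best $1$-dependent), the probability of an infinite open chain is zero. Discarding the rightward edges does not confine survival to the left half-line; it destroys survival altogether.

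The standard repair keeps the branching. Run the Bezuidenhout--Grimmett comparison to oriented site percolation on $\L$ with parameter $p$ as close to $1$ as you wish, and then restrict to the half-lattice $\L^{-}=\{(y,n)\in\L:y\le 0\}$, retaining \emph{both} offspring whenever they lie in $\L^{-}$. Oriented percolation with a wall still survives for $p$ close to $1$, and every block indexed by a site of $\L^{-}$ occupies a space-time region whose spatial projection lies in $(-\infty,K]$ for a fixed $K$ depending only on the block width and the range $M$. One global leftward shift by $K$ then places all relevant blocks (with their range-$M$ buffer) inside $\Zm$, after which your locality observation --- that the graphical representations on $Z_M$ and on $\Zmg_M$ coincide on regions not touching $\{1,2,\dots\}$ --- does finish the argument. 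The bookkeeping you anticipated is precisely this; the piece you are missing is that it is half-plane oriented percolation, not a one-sided chain, that supplies the surviving structure.
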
  




 We are now ready to state and prove the main result of this section. 


\begin{thm}\label{Rr} 
Let $\xi_{t}^{0}$ and $\xi^{\Zm}_{t}$ denote the contact processes  on $Z_{M},M\geq1,$ at rate $\mu$ started from $\{0\}$ and $\Zm$ respectively; let also $r_{t} = \sup \xi_{t}^{0}$ and $R_{t}=\sup \xi^{\Zm}_{t}$. For all $M$, if $\mu> \mu_{c}(Z_{M})$ then $\big\{ r_{t} = R_{t}, \textup{ for all } t \big\}$ has positive probability.
\end{thm}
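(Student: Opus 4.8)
The plan is to deduce Theorem \ref{Rr} from Proposition \ref{nnnkuc1} by self-duality and reflection — in keeping with the point made above that in the non-nearest-neighbours setting the self-duality arguments are exactly what must be checked. First, Proposition \ref{nnnkuc1} is available here: by Theorem \ref{muM}, $\mu>\mu_{c}(Z_{M})=\mu_{c}(\Zmg_{M})$, so the half-line agreement statement holds on $\Zmg_{M}$, and, since the graphical construction is invariant under the reflection $y\mapsto -y$ (a graph automorphism of $Z_{M}$), also on the right half-line graph obtained from $Z_{M}$ by retaining only the edges inside $\{0,1,2,\dots\}$. Moreover, since $\{0\}\subseteq\Zm$, monotonicity of the graphical construction gives $\xi_{t}^{0}\subseteq\xi_{t}^{\Zm}$, hence $r_{t}\leq R_{t}$ for all $t$; so it suffices to prove $\pr(R_{t}\leq r_{t}\textup{ for all }t)>0$.

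Next I would rewrite, at a fixed time $t$, the event $\{r_{t}=R_{t}\}$ by self-duality. Reading the paths of the graphical representation backwards has the same law as reading them forwards, so $\{R_{t}\geq z\}$ translates into ``the dual process started from $[z,\infty)$ reaches $\Zm$ by time $0$'', and $\{r_{t}\geq z\}$ into ``that same dual process reaches $\{0\}$''; applying the reflection $y\mapsto -y$ turns the dual process started from $[z,\infty)$ into an ordinary contact process on $Z_{M}$ started from a left half-line. Since these dual sets are nonincreasing in $z$, it is enough to treat the single worst case $z=R_{t}$, and the biconditional then reads ``this half-line-started process reaches a neighbourhood of the origin $\Rightarrow$ it contains the origin''. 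To obtain the statement for all $t$ at once I would fix a horizon $T$, perform the time-reversal and reflection inside $Z_{M}\times[0,T]$ (the resulting construction again has the law of the original one), rephrase $\{r_{t}=R_{t}\textup{ for all }t\leq T\}$ in that picture, and let $T\to\infty$; the events in question are nonincreasing in $T$, so it suffices to bound their probabilities below uniformly in $T$.

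In the transformed picture the event is of the type covered by Proposition \ref{nnnkuc1} — a left half-line-started contact process agreeing on the finite relevant set of sites with a finite-set-started one — except that the dual process lives on the full graph $Z_{M}$ rather than on the constrained graph $\Zmg_{M}$. I would close this gap using Theorem \ref{SHAPE1} (the shape theorem) together with the linear growth of the right edge of a supercritical contact process: with positive probability no path relevant to the comparison ever leaves a suitable half-space, and on that event the $Z_{M}$-process and the $\Zmg_{M}$-process agree there, so Proposition \ref{nnnkuc1} supplies the desired positive probability of agreement for all time. The step I expect to be the main obstacle is precisely this bookkeeping: arranging the duality and reflection so that ``for all $t$'' collapses to a single event about half-line-started contact processes (the dual processes are naturally indexed by their starting time, which has to be controlled), and carrying out the passage from the unconstrained graph $Z_{M}$ to the half-line graph $\Zmg_{M}$ cleanly enough that Proposition \ref{nnnkuc1} applies; the remaining estimates should be routine.
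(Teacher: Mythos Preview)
Your plan differs substantially from the paper's, and the part you yourself flag as the main obstacle --- collapsing ``$r_t=R_t$ for all $t$'' into a single dual event via time-reversal on $[0,T]$ and then controlling a family of dual processes indexed by their starting times --- is not routine bookkeeping; I do not see a clean way to carry it out. More to the point, the chapter's stated purpose is precisely to give a proof that \emph{avoids} self-duality (the closing paragraph of the chapter's introduction complains that the self-duality steps in Mountford--Sweet are inadequately justified in the non-nearest-neighbours setting), so reintroducing duality runs against the grain of the whole section.

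The paper's argument is a direct coupling with no duality. Set $\M=\{-M,\dots,0\}$. By additivity $\xi_t^{\Zm}=\xi_t^{\M}\cup\xi_t^{\Zm\backslash\M}$, and since every path from $\Zm\backslash\M$ into $\{1,2,\dots\}$ must pass through a site of $\M$, the event
\[
G=\big\{\xi_t^{\M}\cap\M\supseteq\xi_t^{\Zm\backslash\M}\cap\M\ \text{for all }t\big\}
\]
coincides with $\{\sup\xi_t^{\M}=R_t\text{ for all }t\}$. Passing to the half-line processes $\hat{\xi}_t^{\M},\hat{\xi}_t^{\Zm\backslash\M}$ on $\Zmg_M$ built from the same graphical representation, monotonicity gives
\[
G\supseteq\big\{\hat{\xi}_t^{\M}\cap\M\supseteq\hat{\xi}_t^{\Zm\backslash\M}\cap\M\ \text{for all }t\big\},
\]
and by additivity this right-hand event is exactly $\{\hat{\xi}_t^{\Zm}\cap\M=\hat{\xi}_t^{\M}\cap\M\text{ for all }t\}$, which Proposition~\ref{nnnkuc1} (with $F=\M$, using Theorem~\ref{muM} to get $\mu>\mu_c(\Zmg_M)$) says has positive probability. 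This yields $\pr(\sup\xi_t^{\M}=R_t\text{ for all }t)>0$; the step from $\M$ down to $\{0\}$ is then a one-line Markov-property argument at time $1$.

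The idea you are missing is the bridge set $\M$: it is exactly wide enough that every path from the far left to the right must traverse it, which turns ``rightmost of $\Zm$ equals rightmost of $\M$'' into an agreement-on-$\M$ event and feeds directly into Proposition~\ref{nnnkuc1}. Your proposed route through duality, reflection, and the shape theorem would at best reconstruct this containment by indirect means, and the intermediate steps you outline (the ``for all $t$'' duality and the passage from $Z_M$ to $\Zmg_M$) are neither carried out nor clearly feasible as described.
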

\begin{proof} 
Fix $M$ and let $\mu>\mu_{c}(Z_{M})$. Let $\xi_{t}^{\M}$ and $\xi_{t}^{\Zm \backslash \M}$ be the contact process on $Z_{M}$ at rate $\mu$ started from $\M$, $\M := \{0,-1,\dots, -M-1\}$, and $\Zm \backslash \M$ respectively, further let $r_{t}^{\M} =\sup \xi_{t}^{\M}$. We will first show that
\begin{equation}\label{rtMcal}
\pr(r_{t}^{\M} = R_{t} , \mbox{ for all } t)>0.
\end{equation}
Consider  $\xi^{\Zm}_{t}$, $\xi_{t}^{\M}$ and $\xi_{t}^{\Zm \backslash \M}$ constructed by the same graphical representation and define the event 
\[
C=\{\xi_{t}^{\M} \cap \M  \supseteq \xi_{t}^{\Zm \backslash \M} \cap \M, \mbox{\textup{ for all }} t \}.
\]
Furthermore, let $\hat{\xi}_{t}^{\M}$ and $\hat{\xi}_{t}^{\Zm \backslash \M}$ be the contact process on $\Zmg_{M}$ at rate $\mu$ started from $\M$ and $\Zm \backslash \M$ respectively constructed by the same graphical representation as well (this is done by neglecting arrows from $x$ to $y$ such that $x \in \M$ and $y \in \{1,2\dots\}$ for all times). Let also $C'= \{\hat{\xi}_{t}^{\M} \cap \M  \supseteq \hat{\xi}_{t}^{\Zm \backslash \M} \cap \M, \mbox{\textup{ for all }} t \}$, by coupling and then monotonicity we have that 
\begin{equation}\label{Csupsets}
C= \{\xi_{t}^{\M} \cap \M  \supseteq \hat{\xi}_{t}^{\Zm \backslash \M} \cap \M, \mbox{\textup{ for all }} t \} \supseteq C'.
\end{equation}

Let $Z_{M}^{1^{+}}$ be the graph with sites $\Z^{1^{+}}:=\{1,2,\dots\}$, obtained by $Z_{M}$ by retaining all edges among sites in $\Z^{1^{+}}$. Let $\tilde{\xi}_{t+1}^{1\times 1}, t\geq0,$ be the contact process on $Z_{M}^{1^{+}}$ started from $\{1\}$ at time $1$ again constructed by the same graphical representation. Let $S$ be the event that there exists an arrow from some point in $\M \times [0,1]$ to some point in $\{1\} \times [0,1]$ intersected with $\{\tilde{\xi}_{t+1}^{1\times 1} \not= \emptyset, \mbox{ for all } t\geq0\}$. Let also $D$ denote the event that no $\delta$-symbols exist in $\M \times [0,1]$. Note that on $S \cap D$ we have that $\{r_{t}^{\M} \geq 0, \mbox{ for all } t\geq0\}$. 
From this and additivity we have that
\begin{equation}\label{Cr}
\{r_{t}^{\M} = R_{t} , \mbox{ for all } t\} \supseteq  C \cap S \cap D.
\end{equation}
From $(\ref{Csupsets})$ and the last display above, it is sufficient to show that $\pr(C'\cap S \cap D)>0$. 
However by independence of the Poisson processes in the graphical representation, we have that the events $C' \cap D$ and $S$ are independent. Further, from Theorem \ref{muM} and Proposition \ref{nnnkuc1} applied for $F=\M$, monotonicity and the Markov Property give that $\pr(C' \cap D)>0$. In addition, by Theorem \ref{muM} and translation invariance, using the Markov Property implies that $\pr(S)>0$. Thus (\ref{rtMcal}) is proved.  

To complete the proof, let $\xi_{t}^{0}$ and $\xi^{\Zm}_{t}$ be constructed by the same graphical representation,  considering $\{\xi_{1}^{0} \supseteq \M\} \cap \{  \xi_{s}^{0} \cap \{0\} \not= \emptyset \mbox { and } R_{s} \leq 0, \mbox{ for all } s \in (0,1] \}$, the result follows from monotonicity and the Markov property.
\end{proof}

 

The final result of this section addresses the i.i.d.\ nature of the growth of the right endpoint, which is the corresponding extension of the first part of the Theorem in Kuczek \cite{K}. 

 
\begin{thm}\label{THEthm}
Let $\xi_{t}^{0}$ denote the contact processes on $Z_{M}, M\geq1,$ at rate $\mu$ started from $\{0\}$, let also $r_{t} = \sup \xi_{t}^{0}$. For all $M$, if $\mu > \mu_{c}(Z_{M})$ then on $\{\xi_{t}^{0} \not= \emptyset, \mbox{ for all } t\}$ there are strictly increasing random  (but not stopping) times $\psi_{k}, k\geq0,$ such that $(r_{\psi_{n}} - r_{\psi_{n-1}}, \psi_{n} - \psi_{n-1})_{n\geq 1}$ are i.i.d.. 
\end{thm}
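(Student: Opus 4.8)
The plan is to follow Kuczek's break-point method \cite{K}, in the finite-range form of Mountford and Sweet \cite{MS}, with Theorem \ref{Rr} supplying the single model input (the role played there by Theorem 3 of \cite{MS}). I would proceed in three steps: (1) define break points from the graphical representation and record their elementary consequences; (2) show that, on $\{\xi_{t}^{0}\ne\emptyset\text{ for all }t\}$, a break point occurs, and then infinitely many; (3) read off the i.i.d.\ law of the increments between consecutive break points.

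\textbf{Break points.} Work on $Z_{M}$ with $\mu>\mu_{c}(Z_{M})$, and write $\xi_{t}^{A\times s}$, $t\ge s$, for the process issued from $A$ at time $s$, built from the graphical representation; abbreviate $\xi_{t}^{0}=\xi_{t}^{\{0\}\times 0}$, $r_{t}=\sup\xi_{t}^{0}$ and $\bar r_{t}=\sup_{u\le t}r_{u}$. Call $(x,t)$ a \emph{break point} if $r_{t}=x=\bar r_{t}$ (a fresh running maximum of $r$ is set at $x$ at time $t$) and the \emph{regeneration event}
\[
R(x,t):=\bigl\{\sup\xi_{s}^{\{x\}\times t}=\sup\xi_{s}^{(-\infty,x]\times t}\ \text{ for all }s\ge t\bigr\}
\]
holds. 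Since $x\in\xi_{t}^{0}\subseteq(-\infty,x]$, additivity gives $\xi_{s}^{\{x\}\times t}\subseteq\xi_{s}^{0}\subseteq\xi_{s}^{(-\infty,x]\times t}$ for $s\ge t$, so on $R(x,t)$ one has $r_{s}=\sup\xi_{s}^{\{x\}\times t}$ for every $s\ge t$; in particular $\xi^{\{x\}\times t}$ survives and $(r_{t+u})_{u\ge0}$ depends on the graphical representation only through its part at times $\ge t$ issued from the single particle $(x,t)$. That part is independent of $\mathcal{F}_{t}$, hence so is $R(x,t)$; and by translation invariance $\pr(R(x,t))=\pr(\sup\xi_{s}^{\{0\}}=\sup\xi_{s}^{\Zm}\text{ for all }s)=:\delta$, which is strictly positive by Theorem \ref{Rr}. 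Note also that $(0,0)$ is a break point exactly on the event $\Gamma:=\{r_{s}=R_{s}\text{ for all }s\}$ of Theorem \ref{Rr}.

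\textbf{Occurrence.} On $\{\xi_{t}^{0}\ne\emptyset\text{ for all }t\}$ one has $r_{t}\to\infty$ (shape theorem on $Z_{M}$, \cite{DS1}, using $\mu>\mu_{c}(Z_{M})$), so the times $\sigma_{1}<\sigma_{2}<\cdots$ at which $r$ sets a fresh running maximum form an infinite sequence of stopping times with $r_{\sigma_{j}}$ strictly increasing. Now run the restart argument of \cite{K},\cite{MS} along the $\sigma_{j}$: one has $\pr(R(r_{\sigma_{j}},\sigma_{j})\mid\mathcal{F}_{\sigma_{j}})=\delta$ on $\{\sigma_{j}<\infty\}$, and when $R(r_{\sigma_{j}},\sigma_{j})$ fails it is already witnessed to fail by some finite time, past which the next $\sigma_{j'}$ affords a further, $\mathcal{F}$-conditionally independent chance of probability $\delta$; a geometric/Borel--Cantelli estimate then gives $\pr(\text{no break point occurs},\ \xi^{0}\text{ survives})=0$. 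Let $\psi_{0}$ be the first break time (a.s.\ finite on survival) and, inductively, $\psi_{n}$ the first break time exceeding $\psi_{n-1}$ (again a.s.\ finite, by the same argument applied to the restarted copy below).

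\textbf{Renewal, and the main obstacle.} At the break time $\psi_{n-1}$, the paragraph on break points shows that $(r_{\psi_{n-1}+u}-r_{\psi_{n-1}})_{u\ge0}$ is a function of the part of the graphical representation at times $\ge\psi_{n-1}$ issued from the single particle $(r_{\psi_{n-1}},\psi_{n-1})$, while the event that ``$\psi_{n-1}$ is a break time'' is $R(r_{\psi_{n-1}},\psi_{n-1})$, a probability-$\delta$ event of that same part; translating so that $(r_{\psi_{n-1}},\psi_{n-1})$ becomes the origin, this part is a copy of the whole graphical representation and the conditioning is precisely on $\Gamma$, i.e.\ on ``$(0,0)$ is a break point''. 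Hence $(r_{\psi_{n}}-r_{\psi_{n-1}},\ \psi_{n}-\psi_{n-1})$ is, for every $n\ge1$, distributed as the (displacement, time) of the first break point after $(0,0)$ computed under $\pr(\cdot\mid\Gamma)$, the same law for all $n$, and independent across $n$ by the product structure of the graphical representation over disjoint time regions; this is the $Z_{M}$ transcription of the i.i.d.\ statement established in \cite{K} and extended in \cite{MS}. Everything thus reduces to Theorem \ref{Rr}, which is already proved; the one genuinely delicate point in the reduction --- the step I would write in full, following \cite{K},\cite{MS} --- is that the $\psi_{n}$ are \emph{not} stopping times, so the renewal property is not a consequence of the strong Markov property: it must be obtained directly, using that the failures of the tentative break points preceding $\psi_{n}$ are already witnessed in finite time, so that conditioning on the location of $\psi_{n}$ leaves the subsequent graphical representation an unconditioned translate apart from the single event $\Gamma$. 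No block construction enters anywhere.
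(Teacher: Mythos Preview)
Your proposal is correct and follows the same overall route as the paper: Theorem~\ref{Rr} supplies the one model input (positive probability that $\{0\}$ ``controls subsequent edges''), a restart argument yields the existence of break points on survival, and the i.i.d.\ structure is read off exactly as in Kuczek/Mountford--Sweet; you even flag correctly that the non-stopping nature of the $\psi_n$ is the only genuinely delicate step.

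There are two cosmetic differences worth noting. First, your break points are the \emph{fresh running maxima} of $r_t$ at which the regeneration event holds (this is Kuczek's original flavour), whereas the paper takes $\psi_n=\inf\{t\ge 1+\psi_{n-1}:\ r_t\times t\text{ c.s.e.}\}$, i.e.\ it drops the fresh-maximum requirement and simply waits one time unit before searching; either choice gives a strictly increasing sequence with i.i.d.\ increments. Second, for ``occurrence'' you invoke the shape theorem on $Z_M$ to get $r_t\to\infty$ and hence infinitely many trial times $\sigma_j$; the paper instead runs an entirely self-contained restart (their Lemma following the theorem): if $\xi_t^0$ dies one re-seeds a new copy from $\{0\}$ at the extinction time, so a right edge $r'_t$ is always available, and a geometric number of trials (spaced by the failure-witnessing times) produces a c.s.e.\ point a.s., then one intersects with survival. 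This avoids any appeal to the shape theorem and keeps the argument completely elementary, which is the point the chapter advertises. Your version is not wrong, just slightly heavier in that one place.
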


\begin{proof}
Fix $M$ and let $\mu>\mu_{c}(Z_{M})$. Consider the graphical representation for contact processes at rate $\mu$ on $Z_{M}$. Given a space-time point $x \times s$, let $\bar{\xi}^{x\times s}_{t+s}, t\geq 0,$ denote the process started from $\{y: y\leq x\}$ at time $s$ and let also $R_{t+s}^{x \times s}= \sup\bar{\xi}^{x\times s}_{t+s}$; furthermore let $\xi^{x\times s}_{t+s}, t\geq 0$, denote the process started from $\{x\}$ at time $s$, and let also $r_{t+s}^{x \times s} = \sup\xi^{x\times s}_{t+s}$. We write that $x \times s \mbox{ c.s.e.}$ for $R_{u}^{x \times s}= r_{u}^{x \times s}, \mbox{ for all } u\geq 0$, where the shorthand c.s.e.\ stands for "controls subsequent edges". 

By Theorem $\ref{Rr}$ we have that $p:= \pr( 0 \times 0\mbox{ c.s.e.})>0$. From this and the next lemma the proof follows by letting $\psi_{n} = \inf\{t\geq 1+\psi_{n-1}: r_{t} \times t \mbox{ c.s.e.}\},$ $n\geq0$, $\psi_{-1}:=0$, by elementary, known arguments, as in Lemma 7 in \cite{MS}.

\begin{lem}\label{restrt}
Consider the non stopping time $\psi = \inf\{t\geq 1: r_{t} \times t \mbox{ \textup{c.s.e.}}\}$. 
We have that $\psi$ and $r_{\psi}$ are a.s.\ finite conditional on either $\{\xi_{t}^{0}\not= \emptyset, \textup{ for all } t\}$ or $\{0 \times 0 \textup{ c.s.e.}\}$. 
\end{lem} 
\end{proof}

\begin{proof}[Proof of Lemma \ref{restrt}]
We define the sequence of processes $\xi_{t}^{n}, n\geq0,$ as follows. Consider $\xi_{t}^{0}:= \xi_{t}^{0 \times 0}$ and let $T_{0} = \inf\{ t: \xi_{t}^{0} = \emptyset\}$; inductively for all $n\geq0$, on $T_{n}<\infty$, let $\xi_{t}^{n+1}:= \xi_{t}^{0 \times T_{n}}, t\geq T_{n},$ and take $T_{n+1} = \inf\{t\geq T_{n}: \xi_{t}^{n+1} = \emptyset\}$. 

Let $r_{t}^{n} = \sup\xi_{t}^{n}$ and consider $r_{t}' := r_{t}^{n}$ for all $t\in [T_{n-1}, T_{n})$, where $T_{-1}:=0$. Let $\tau_{1}=1$ and inductively for all $n\geq1$, on $\tau_{n}<\infty$, let $\sigma_{n} := \sum_{k=1}^{n} \tau_{k}$ and $\tau_{n+1} = \inf\{t \geq 0: R_{t}^{r'_{\sigma_{n}} \times \sigma_{n}} > r_{t}^{r'_{\sigma_{n}} \times \sigma_{n}}\}$, while on $\tau_{n} = \infty$ let $\tau_{l} = \infty$ for all $l\geq n$. Let also $N = \inf\{n\geq1: \tau_{n+1} = \infty\}$. Since on $\{\xi_{t}^{0} \not= \emptyset, \mbox{ for all } t\}$, and on its subset $\{0 \times 0 \mbox{ c.s.e.}\}$, we have that $\psi = \sigma_{N}$ and $r'_{\sigma_{N}} = r_{\psi}$, it is sufficient to prove that $\sigma_{N}, r'_{\sigma_{N}}$ are a.s.\ finite.  

We prove the last claim. Note that, by translation invariance and independence of Poisson processes in disjoint parts of the graphical representation, we have that for all $n\geq1$ the event $\{\tau_{n+1}= \infty\}$ has probability $p$ and is independent of the graphical representation up to time $\sigma_{n}$. This and Bayes's sequential formula give that $\pr(N =n) = p(1-p)^{n-1}$ and, in particular, $N$ is a.s.\ finite. Thus also $\sigma_{N}$ is a.s.\ finite, which implies that $r'_{\sigma_{N}}$ is a.s.\ finite because $|r'_{t}|$ is bounded above in distribution by the number of events by time $t$ of a Poisson process at rate $M\mu$. This completes the proof. 


\end{proof}


\section{Large deviations}\label{Sperc}

We consider 1-dependent oriented site percolation with density at least $1-\epsilon$, that is, letting 
$\L =\{ (y,n) \in \Z^{2}:  y+n \mbox{ is even}, n\geq0\}$, a collection of random variables $w(y,n) \in \{0,1\}$ such that $(y,n) \in \L$ and $n\geq1$, which satisfies the property that $\pr \big( w(y_{i},n+1) =0 \mbox{ for all } 1\leq i \leq I|\mbox{} \{w(y,m), \mbox{ for all } m\leq n\} \big) \leq \epsilon^{I}$, where $|y_{i} - y_{i'}| > 2$ for all $1\leq i \leq I$ and $1\leq i' \leq I$ . Given a realization of 1-dependent site percolation we write $(x,0) \rightarrow(y,n)$, if there exists $x:= y_{0},\dots,y_{n}:=y$ such that $|y_{i} - y_{i-1}|=1$ and $w(y_{i}, i)= 1$ for all $1\leq i \leq n$. Let $2\Z = \{x: (x,0)\in \L\}$, for any given $A \subseteq 2\Z$, consider $W_{n}^{A} = \{y:(x,0)  \rightarrow (y,n) \mbox{ for some } x\in A\}$. Let also $2\Z+1 = \{x: (x,1)\in \L\}$, and define $X(n)$ to be $X(n)= 2\Z$ for even $n$, while $X(n)= 2\Z+1$ for odd $n$. Subsequently $C$ and $\gamma$ will represent positive, finite constants. 


The next lemma is used in the proof of the main result of this section below. It is a consequence of the result of Durrett and Schonmann \cite{DS2}. 

\begin{lem}\label{ldlem}
For all $\rho<1$ there is $\epsilon>0$ such that for any $n\geq1$ and $Y$, $Y \subset X(n)$, the probability of $\Big\{\sum\limits_{y \in Y} 1(y \in W^{2\Z}_{n}) < \rho |Y| \Big\}$ is bounded by $C e^{-\gamma |Y|}$.
\end{lem}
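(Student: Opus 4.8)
The plan is to deduce the statement from the large deviation estimate for the density of supercritical oriented percolation in Durrett and Schonmann \cite{DS2}. The single ingredient I would isolate is a ``many dry sites are expensive'' bound, uniform in the level and in the geometry: there exist a finite constant $C_{0}$ and a function $q(\epsilon)$ with $q(\epsilon)\to 0$ as $\epsilon\to 0$ such that, for every $n\geq 1$ and every finite $S\subseteq X(n)$,
\begin{equation*}
\pr\big(S\cap W^{2\Z}_{n}=\emptyset\big)\;\leq\;\big(C_{0}\,q(\epsilon)\big)^{|S|}.
\end{equation*}
This is the estimate underlying the density bound of \cite{DS2} and follows from the arguments there: if every site of $S$ fails to be wet at level $n$, then tracing ancestry downwards forces a ``barrier'' of closed sites that separates $S$ from level $0$ and whose total size is of order $|S|$ (a gap of $k$ same-parity sites in $W^{2\Z}_{\cdot}$ shrinks by two per level going up unless fed by closed sites, so it can survive up to level $n$ only at the cost of $\gtrsim k$ closed sites, and parts of $S$ sitting in far-apart light cones need independent barriers), and such a barrier has probability exponentially small in $|S|$ with rate tending to infinity as $\epsilon\to 0$, the combinatorial Peierls count of admissible barriers being absorbed into $C_{0}^{|S|}$. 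If \cite{DS2} is stated only for independent oriented site percolation, I would first reduce to that case: by the domination theorem of Liggett, Schonmann and Stacey, a $1$-dependent field satisfying $\pr(w(y_{i},n+1)=0,\ 1\leq i\leq I \mid \{w(y,m): m\leq n\})\leq\epsilon^{I}$ stochastically dominates an independent oriented site percolation of density $p(\epsilon)\to 1$ as $\epsilon\to 0$, and since $1(y\in W^{2\Z}_{n})$ is an increasing function of the configuration, the bound for the product measure transfers to the $1$-dependent field.

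Granting the displayed bound, the lemma is a union bound. Fix $\rho\in(0,1)$ and, for a given finite $Y\subseteq X(n)$, let $m$ be the least integer strictly larger than $(1-\rho)|Y|$, so that $1\leq m\leq|Y|$ and $|Y|/m<1/(1-\rho)$. On the event $\{\sum_{y\in Y}1(y\in W^{2\Z}_{n})<\rho|Y|\}$ at least $m$ sites of $Y$ are dry, so that event is contained in $\bigcup_{S\subseteq Y,\ |S|=m}\{S\cap W^{2\Z}_{n}=\emptyset\}$. Using $\binom{|Y|}{m}\leq(e|Y|/m)^{m}\leq(e/(1-\rho))^{m}$ together with the displayed estimate,
\begin{equation*}
\pr\Big(\textstyle\sum_{y\in Y}1(y\in W^{2\Z}_{n})<\rho|Y|\Big)\;\leq\;\binom{|Y|}{m}\big(C_{0}\,q(\epsilon)\big)^{m}\;\leq\;\Big(\frac{e\,C_{0}}{1-\rho}\,q(\epsilon)\Big)^{m}.
\end{equation*}
Choosing $\epsilon>0$ small enough (depending on $\rho$) that $e\,C_{0}\,q(\epsilon)/(1-\rho)\leq 1/2$, the right-hand side is at most $2^{-m}\leq 2^{-(1-\rho)|Y|}$, which is of the asserted form $Ce^{-\gamma|Y|}$ with $\gamma=(1-\rho)\ln 2$ and any $C\geq 1$.

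The only genuine obstacle is the uniform barrier estimate stated first. The events $\{y\in W^{2\Z}_{n}\}$ for $y\in Y$ are increasing, hence positively correlated, so dry sites cluster and one cannot simply multiply marginals; what is required is a control, uniform in $n$ and over all possible geometries of $S$ (long runs of consecutive sites, sites scattered across far-apart light cones, or any mixture), of the cost of forcing all of $S$ dry simultaneously, and that is precisely what the dual/contour analysis behind \cite{DS2} delivers. Everything downstream of it — the reduction to independent percolation and the union bound above — is routine.
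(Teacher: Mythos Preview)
Your approach is in the right spirit but differs from the paper's and leans on an estimate that you yourself flag as the real obstacle. The paper's proof is shorter and more indirect: it first dominates the $1$-dependent field by independent \emph{bond} percolation at parameter $p$ close to $1$ (via Theorems B24 and B26 in Liggett \cite{L99}), then observes that $B_{n}^{2\Z}$ stochastically dominates the stationary process $\tilde B_{n}$ started from the upper invariant measure, and finally invokes the density large-deviation bound of Durrett--Schonmann \cite{DS2} for the stationary process directly. No intermediate ``all of $S$ dry'' estimate or union bound over subsets is needed---the theorem in \cite{DS2} already delivers the density bound for a given set $Y$.

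Your route instead tries to extract from \cite{DS2} the pointwise barrier bound $\pr(S\cap W^{2\Z}_{n}=\emptyset)\le (C_{0}q(\epsilon))^{|S|}$ uniformly over \emph{all} finite $S\subset X(n)$, and then rebuild the density statement by a union bound. The heuristic you sketch (backward light cones, shrinking gaps, independent barriers for far-apart pieces) is plausible, and something of this flavour does underlie \cite{DS2}, but the result there is stated for the density in the stationary measure, not as a uniform dry-set bound over arbitrary geometries; turning their contour argument into your displayed inequality for general $S$ is genuine extra work that you have not carried out. So as written your proof has a gap exactly where you identify it, whereas the paper sidesteps the issue entirely by passing to the stationary process and citing the theorem as a black box. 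If you want to salvage your approach, the cleanest fix is to imitate the paper: dominate $W_{n}^{2\Z}$ by the stationary chain and apply \cite{DS2} directly, rather than trying to prove the stronger dry-set estimate.
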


\begin{proof}  

We first consider standard independent bond percolation process, $B_{n}$, where $B_{n} \subset X(n)$, and let $p_{c}$ denote its critical value, for definitions see \cite{L99,D84}, the next lemma is proved immediately afterwards.

\begin{lem}\label{cor2z}
Let $B_{n}^{2\Z}$ be independent bond percolation process with parameter $p>p_{c}$ started from $2\Z$. For all $p'<p$ and any $n\geq1$ and $Y$, $Y \subset X(n)$, the probability of $\Big\{\sum\limits_{y \in Y} 1(y \in B_{n}^{2\Z}) < p' |Y| \Big\}$ is bounded by $Ce^{-\gamma |Y|}.$
\end{lem}

The proof then follows because we can choose $\epsilon>0$ sufficiently small such that $W^{2\Z}_{n}$ stochastically dominates $B_{n}^{2\Z}$ with parameter $p$ arbitrarily close to 1, which comes by combining Theorem B24 and Theorem B26 in \cite{L99}. 
\end{proof}

\begin{proof}[Proof of Lemma \ref{cor2z}]
Let $p>p_{c}$, let also $\tilde{B}_{n}$ be independent bond percolation process with parameter $p$ started from $\tilde{B}_{0}$ which is distributed according to the upper invariant measure of the process. By monotonicity we easily 
have $B^{2\Z}_{n}$ stochastically dominates $\tilde{B}_{n}$. From this, the proof follows by the invariance of $(\tilde{B}_{n})$ and the analogue of Theorem 1 in \cite{DS2} in this case.
\end{proof}

We now state and prove the main result of the section.
\begin{prop}\label{1depmain}
For all $\rho<1$ and all $\beta<1$ there is $\epsilon>0$ such that for any $n\geq1$ and $Y$, $Y \subset X(n) \cap [-\beta n,\beta n]$, the probability of $\Big\{\sum\limits_{y \in Y}1(y \in W_{n}^{0})<\rho |Y|, W_{n}^{0}\not= \emptyset\Big\}$ is bounded by  $Ce^{-\gamma n} + Ce^{-\gamma |Y|}$.
\end{prop}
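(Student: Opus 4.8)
The plan is to reduce the statement about the process $W_n^0$ started from a single site to the statement about $W_n^{2\Z}$ started from all sites, which is handled by Lemma \ref{ldlem}. The crux is the standard coupling fact for $1$-dependent oriented percolation: on the event $\{W_n^0 \not= \emptyset\}$, the two processes agree in a neighbourhood of the descendants of $0$. More precisely, writing $l_n = \inf W_n^0$ and $r_n = \sup W_n^0$, monotonicity gives $W_n^0 \subseteq W_n^{2\Z}$ always, and on $\{W_n^0 \not=\emptyset\}$ one has $W_n^0 = W_n^{2\Z} \cap [l_n, r_n]$ (the descendants of sites to the left of $l_0$ cannot enter the interval $[l_n,r_n]$ without passing through a site of $W_n^0$, by the same argument as Lemma \ref{piprendcoup1}/Lemma \ref{cccoup} in the contact-process setting, adapted to site percolation). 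Hence on $\{W_n^0\not=\emptyset\}$, for any $Y$,
\[
\sum_{y\in Y} 1(y\in W_n^0) \;=\; \sum_{y \in Y \cap [l_n,r_n]} 1(y \in W_n^{2\Z}).
\]

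Next I would control $l_n$ and $r_n$. By Lemma \ref{indperc} (applied to both $W_n^0$ and its mirror image, using symmetry of the percolation model), for $\epsilon$ small enough there are $a,\gamma,C>0$ with
\[
\pr\big(r_n < an,\ W_n^0 \text{ survives}\big) \le Ce^{-\gamma n}, \qquad
\pr\big(l_n > -an,\ W_n^0 \text{ survives}\big) \le Ce^{-\gamma n}.
\]
Choose the dilation parameter $\beta$ small relative to $a$; since $Y \subseteq [-\beta n, \beta n]$, on the complement of the above two exceptional events we have $Y \subseteq [l_n, r_n]$, so that the displayed identity above becomes $\sum_{y\in Y}1(y\in W_n^0) = \sum_{y\in Y}1(y\in W_n^{2\Z})$. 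Therefore
\[
\Big\{\textstyle\sum_{y\in Y}1(y\in W_n^0) < \rho|Y|,\ W_n^0\not=\emptyset\Big\}
\ \subseteq\ \big\{r_n < an,\ W_n^0 \text{ survives}\big} \cup \big\{l_n > -an,\ W_n^0 \text{ survives}\big\} \cup \Big\{\textstyle\sum_{y\in Y}1(y\in W_n^{2\Z}) < \rho|Y|\Big\}.
\]

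Finally I would bound the three pieces: the first two are $\le Ce^{-\gamma n}$ by the paragraph above, and the third is $\le Ce^{-\gamma|Y|}$ by Lemma \ref{ldlem} (after shrinking $\epsilon$ further if necessary so that both that lemma and the survival estimates hold simultaneously — note $\epsilon$ can always be decreased, as all the cited bounds are monotone in the density). Adding the three contributions gives the bound $Ce^{-\gamma n} + Ce^{-\gamma|Y|}$ claimed, after relabelling constants. The one point requiring a little care — and the main obstacle — is the precise justification that $W_n^0 = W_n^{2\Z}\cap[l_n,r_n]$ on survival for $1$-dependent (not independent) oriented site percolation; this is purely a path/combinatorial statement about which sites can be reached and does not use independence, so the $1$-dependence is harmless, but it should be spelled out by induction on $n$ exactly as in Lemma \ref{piprendcoup1}. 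Everything else is bookkeeping with the exponential estimates already in hand.
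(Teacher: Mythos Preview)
Your approach is the same as the paper's: reduce to $W_n^{2\Z}$ via the coupling identity $W_n^0 = W_n^{2\Z}\cap[l_n,r_n]$, use linear growth of the endpoints to force $Y\subseteq[l_n,r_n]$, then apply Lemma~\ref{ldlem}. However, your three-way decomposition has a genuine gap.

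Your linear-growth estimates are stated on $\{W_n^0\text{ survives}\}=\{\tau=\infty\}$, whereas the proposition concerns $\{W_n^0\not=\emptyset\}=\{\tau>n\}$. On the difference $\{n<\tau<\infty\}$ you have no control over $r_n,l_n$, so you cannot conclude $Y\subseteq[l_n,r_n]$ there, and the inclusion you write fails: an outcome with $W_n^0\not=\emptyset$, $W_n^0$ dying out after time $n$, and (say) $r_n$ small need not lie in any of your three sets. The paper handles exactly this by adding the piece $\{n\leq\tau<\infty\}$ and bounding it via Lemma~\ref{taun} by $Ce^{-\gamma n}$; that is the missing ingredient in your argument.

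Two smaller points. First, you write ``choose the dilation parameter $\beta$ small relative to $a$'', but $\beta<1$ is \emph{given} and cannot be chosen; the correct move (as in the paper's Lemma~\ref{En}) is to shrink $\epsilon$ so that the edge speed exceeds the prescribed $\beta$. Second, Lemma~\ref{indperc} is for \emph{independent} site percolation, so you should note how you transfer it to the 1-dependent setting (stochastic domination, as in the proof of Lemma~\ref{ldlem}, does the job). Your observation that the coupling identity is purely path-combinatorial and hence unaffected by 1-dependence is correct; the paper records it as Lemma~\ref{coupWW}.
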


\begin{proof}
Let $\tau = \inf\{n: W_{n}^{0} = \emptyset\}$, let also $R_{n} = \sup W_{n}^{0}$ and $L_{n} = \inf W_{n}^{0}$.
The following sequence of lemmas are known results, we refer to \cite{D84} and \cite{L99} for proofs. 

\begin{lem}\label{coupWW}
On $\{\tau = \infty\}$, $W_{n}^{0} = W^{2\Z}_{n} \cap [L_{n},R_{n}]$.
\end{lem}
\begin{lem}\label{taun}
There is $\epsilon>0$ such that for any $n\geq1$ the probability of $\{n \leq \tau < \infty\}$ is bounded by $Ce^{-\gamma n}$.
\end{lem} 
\begin{lem}\label{En} 
For all $\beta<1$ there is $\epsilon>0$ such that for any $n\geq1$ the probability of $\{[L_{n}, R_{n}] \subseteq [-\beta n,\beta n], \tau= \infty\}$ is bounded by  $Ce^{-\gamma n}$.
\end{lem}

Choose $\epsilon>0$ sufficiently small such that Lemmas \ref{ldlem}, \ref{taun} and \ref{En} are all satisfied. By simple set theory from Lemma \ref{taun} and Lemma \ref{En}, it is sufficient to prove that the probability of $\Big\{\sum\limits_{y \in Y} 1(y \in W_{n}^{0}) < \rho |Y|\Big\}$ on $\{[L_{n}, R_{n}] \supseteq [-\beta n,\beta n]\} \cap \{\tau = \infty\}$ is bounded by  $Ce^{-\gamma |Y|}$, this however follows from Lemma \ref{ldlem} by use of Lemma \ref{coupWW}.  
\end{proof}

We finally give a consequence of the last result. The argument is from the proof of Lemma 3 in \cite{MS}.

\begin{cor}
For all $\rho<1$ and $\beta<1$ there is $\epsilon>0$ such that for any $n\geq1$ and $b \in (0,\beta]$, 
the probability that there exists a sequence $(y_{k})_{k=1}^{b n}$ of consecutive points in $X(n) \cap [-\beta n,\beta n]$ such that $\sum\limits_{k=1}^{bn}1(y_{k} \in W_{n}^{0}) < \rho b n$ and $W_{n}^{0} \not= \emptyset$, is bounded by $C e^{-\gamma bn}$, where $C, \gamma >0$ are independent of $n$ and $b$. 
\end{cor}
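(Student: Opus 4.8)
The plan is to obtain the bound by a union bound over all admissible windows combined with a single application of Proposition \ref{1depmain}. Fix $\rho<1$ and $\beta<1$, write $\ell=bn$, and let $Y_1,\dots,Y_m$ enumerate the sequences of $\ell$ consecutive points of $X(n)\cap[-\beta n,\beta n]$; since $X(n)$ has spacing $2$, there are at most $m\le \beta n$ of them, and each satisfies $Y_j\subset X(n)\cap[-\beta n,\beta n]$ with $|Y_j|=\ell$. The event in the statement is contained in $\bigcup_{j=1}^{m}\big\{\sum_{y\in Y_j}1(y\in W_n^0)<\rho\,|Y_j|,\ W_n^0\neq\emptyset\big\}$, so Boole's inequality and Proposition \ref{1depmain}, applied with the same $\rho$ and $\beta$ (for the $\epsilon>0$ it produces, which in particular makes Lemmas \ref{taun}, \ref{En} and \ref{ldlem} available), bound the probability in question by $\beta n\,(Ce^{-\gamma n}+Ce^{-\gamma\ell})$.

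It then remains to absorb the prefactor into the exponential with constants not depending on $n$ or $b$. For the term $\beta n\,Ce^{-\gamma n}$: since $\ell=bn\le\beta n\le n$ one has $e^{-\gamma n}\le e^{-\gamma \ell}$, and $n\,e^{-\gamma n/2}$ is bounded by a constant depending only on $\gamma$, so this term is at most $C_1 e^{-(\gamma/2)\ell}$; one may alternatively observe that the "$e^{-\gamma n}$" part of Proposition \ref{1depmain} arises from the global events $\{n\le\tau<\infty\}$ and $\{[L_n,R_n]\subseteq[-\beta n,\beta n],\,\tau=\infty\}$ of Lemmas \ref{taun}–\ref{En}, which are common to all windows and hence enter only once, not $\beta n$ times. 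For the term $\beta n\,Ce^{-\gamma\ell}$ one decreases the rate to $\gamma'=\gamma/2$ and uses that $\beta C\,m\,e^{-(\gamma/2)m}$ is bounded over $m\ge1$, having in mind that on the good event $W_n^0\cap[-\beta n,\beta n]=W_n^{2\mathbb{Z}}\cap[-\beta n,\beta n]$ by Lemma \ref{coupWW}, which is exactly what licenses the window-by-window use of Lemma \ref{ldlem} through Proposition \ref{1depmain}. Combining the two pieces yields the asserted bound $Ce^{-\gamma bn}$.

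The main obstacle is precisely this last absorption step: one must verify that the polynomial-in-$n$ window count can be swallowed by the exponential factor $e^{-\gamma bn}$ with constants independent of $n$ and $b$, and this is the point where one follows verbatim the argument in the proof of Lemma 3 of \cite{MS} — the role of $\beta<1$ is to keep the number of relevant windows linear in $n$, while the exponent, which scales with $\ell=bn$, together with the global exponential control supplied by Lemmas \ref{taun}–\ref{En}, is what makes the trade $n\,e^{-\gamma\ell}\lesssim e^{-(\gamma/2)\ell}$ go through. No further block-construction input is needed beyond Proposition \ref{1depmain}.
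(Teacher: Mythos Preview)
Your approach---a union bound over the at most $\beta n$ windows followed by Proposition \ref{1depmain}---is exactly the paper's one-line argument, so at the level of strategy you are on target.

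However, your absorption step contains a slip. You write that the term $\beta n\,C e^{-\gamma\ell}$ can be handled because ``$\beta C\,m\,e^{-(\gamma/2)m}$ is bounded over $m\ge 1$''. But the prefactor is $n$, not $m=\ell=bn$; the quantity you actually need to bound is $\beta\,n\,e^{-(\gamma/2)\ell}=(\beta/b)\,\ell\,e^{-(\gamma/2)\ell}$, and since $b\in(0,\beta]$ is allowed to be arbitrarily small this is \emph{not} bounded uniformly in $b$ by the argument you give (take $\ell=bn=1$ and let $n\to\infty$). Your observation that the $e^{-\gamma n}$ contribution from Lemmas \ref{taun}--\ref{En} is global and need not be multiplied by the window count is correct and useful, but it does not rescue the $\beta n\,e^{-\gamma\ell}$ term. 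The paper's own proof is no more explicit here---it simply asserts that a polynomial prefactor is absorbed---so the honest resolution is the deferral you already make to Lemma~3 of \cite{MS}; just do not claim the elementary ``$m e^{-cm}$ bounded'' inequality does the job on its own, because with $n$ rather than $bn$ in front it does not.
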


\begin{proof} 
Since the number of $(y_{k})_{k=1}^{bn}$ considered is of polynomial order in both $n$ and $b$, the proof follows from Proposition \ref{1depmain}.
\end{proof}

\begin{remark}
\textup{The last corollary implies the corresponding statement for contact processes by use of the comparison result in \cite{D91}, and the argument in the proof of Proposition 3.3 in \cite{T} (equivalently, Proposition \ref{shadldev2} above). Alternatively this can be done by the arguments in the proof of Corollary 4 in \cite{MS}.}
\end{remark}

\chapter{On two basic monotonicity properties of three state contact processes}



\paragraph{Abstract:} The three state contact process is the modification of the contact process at rate $\mu$ in which first infections occur at rate $\lambda$ instead. It is shown that the condition $\mu \geq \lambda$ is necessary and sufficient for preserving monotonicity in set of initially infected sites analogously to the contact process. It is also shown that survival of the process for all $\mu$ and $\lambda$ is more likely than that of the process standard spatial epidemic at rate $\lambda$, that is the process for $\mu=0$. The proofs presented are based on elementary extensions of known coupling techniques. 
\vfill

\section{Introduction and results}
The \textit{three state contact process} on $G$, a connected graph of bounded degree, is a continuous time Markov process, $\zeta_{t}$, on the state space $\{-1,0,1\}^{V}$,  elements of which are called configurations. One can think of configurations as functions from $V$ to $\{-1,0,1\}$. The evolution of $\zeta_{t}$ is described locally as follows. Transitions at each site $u$, $\zeta_{t}(u)$, occur according to the rules:
\begin{equation*}\label{rates}
\begin{array}{cl}
-1 \rightarrow 1 & \mbox{ at rate } \lambda |\{v \sim u:\zeta_{t}(v)= 1\}| \\
\mbox{ } 0 \rightarrow 1 & \mbox{ at rate } \mu |\{v \sim u:\zeta_{t}(v)=1\} | \\   
\mbox{ } 1 \rightarrow 0 & \mbox{ at rate } 1,
\end{array}
\end{equation*}
for all $t\geq0$, where $v \sim u$ denotes that $u$ is joined to $v$ by an edge and, the cardinal of a set $B\subset V$ is denoted by $|B|$. We note that the cases that $\lambda = \mu$ and $\mu=0$ respectively correspond to the extensively studied contact process and to the forest fire model, see \cite{L99} and \cite{D88}. For general information about interacting particle systems, such as the fact that the assumption that $G$ being of bounded degree assures that the above rates define a unique process, see Liggett \cite{L}.

We incorporate the initial configuration, $\eta$, and the pair of the parameters $(\lambda,\mu)$ to our notation in the following fashion $\zeta_{t}^{\{\eta, (\lambda,\mu)\}}$. If $\eta$ is such that $\eta(x)=1$ for all $x\in A$, $A\subset V$, and  $\eta(x)=-1$ for all $x\in V \backslash A$ then we denote the process by $\zeta_{t}^{\{\eta, (\lambda,\mu)\}}$ as $\zeta_{t}^{\{A, (\lambda,\mu)\}}$, while if $A = \{u\}$ we abbreviate $\zeta_{t}^{\{\{u\}, (\lambda,\mu)\}}$ by 
$\zeta_{t}^{\{u, (\lambda,\mu)\}}$. 

The following result is the comparison with the forest fire model. 

\begin{prop}\label{ForestFire}
For any $(\lambda,\mu)$ and $w\in V$ we have that $\zeta_{t}^{\{w,(\lambda,\mu)\}}$ and $\zeta_{t}^{\{w,(\lambda,0)\}}$ 
can be coupled such that the event $\left\{\zeta_{t}^{\{w,(\lambda,0)\}}(v) = 1 \mbox{ for some } t\geq0  \right\}$ implies that $\left\{\zeta_{t}^{\{w,(\lambda,\mu)\}}(v) = 1 \mbox{ for some } t\geq0  \right\}$ for any $v\in V$, a.s..
\end{prop}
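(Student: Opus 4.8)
The plan is to realise both processes on a single graphical construction and then deduce that, almost surely, every site ever infected by the forest fire model is ever infected by the three state contact process.

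Assume first that $\mu\geq\lambda$. For each ordered pair of neighbours $(u,v)$ take independent Poisson processes of \emph{$\lambda$-arrows} at rate $\lambda$ and of \emph{$(\mu-\lambda)$-arrows} at rate $\mu-\lambda$, and for each site an independent rate-$1$ Poisson process of \emph{recovery marks}. I build $\zeta_t^{\{w,(\lambda,\mu)\}}$ in the usual way: a $\lambda$-arrow issuing from a site in state $1$ sends a $-1$ or a $0$ at its head to $1$; a $(\mu-\lambda)$-arrow issuing from a site in state $1$ sends a $0$ at its head to $1$; a recovery mark sends $1$ to $0$. I build $\zeta_t^{\{w,(\lambda,0)\}}$ from the \emph{same} $\lambda$-arrows and recovery marks, discarding the $(\mu-\lambda)$-arrows and letting a $\lambda$-arrow issuing from a site in state $1$ act only on a $-1$ at its head; this rule never produces a $0\to1$ transition, so the resulting process is exactly the forest fire model $\zeta_t^{\{w,(\lambda,0)\}}$. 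Write $\mathcal{I}(\cdot)$ for the infected set of a configuration.

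The heart of the matter is the pointwise domination $\mathcal{I}(\zeta_t^{\{w,(\lambda,0)\}})\subseteq\mathcal{I}(\zeta_t^{\{w,(\lambda,\mu)\}})$ for every $t\geq0$, almost surely. Since by the graphical construction the transitions of the two processes occur at an a.s.\ locally finite set of times, it suffices to verify that this inclusion holds at $t=0$ (both sides are $\{w\}$) and is preserved at each transition. Recovery marks preserve it, because such a mark can only remove a site from $\mathcal{I}(\zeta_t^{\{w,(\lambda,\mu)\}})$ and, by the inductive hypothesis, a site it removes from $\mathcal{I}(\zeta_t^{\{w,(\lambda,0)\}})$ it also removes from $\mathcal{I}(\zeta_t^{\{w,(\lambda,\mu)\}})$. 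A $(\mu-\lambda)$-arrow leaves $\mathcal{I}(\zeta_t^{\{w,(\lambda,0)\}})$ untouched and can only enlarge $\mathcal{I}(\zeta_t^{\{w,(\lambda,\mu)\}})$. The one case calling on $\mu\geq\lambda$ is a $\lambda$-arrow $u\to v$ that creates a new infection in the forest fire: then $u$ is in state $1$ in $\zeta_t^{\{w,(\lambda,0)\}}$, hence, by the inductive hypothesis, in state $1$ in $\zeta_t^{\{w,(\lambda,\mu)\}}$, while $v$ is in state $-1$ in $\zeta_t^{\{w,(\lambda,0)\}}$; whichever of $-1,0,1$ is the state of $v$ in $\zeta_t^{\{w,(\lambda,\mu)\}}$, the $\lambda$-arrow out of the infected $u$ leaves $v$ in state $1$ there, and it is exactly here that I use that, when $\mu\geq\lambda$, $\lambda$-arrows may be permitted to re-infect sites in state $0$. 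Passing to the union over $t$ then gives the statement of the proposition.

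The step I expect to be the genuine obstacle is the case $\mu<\lambda$, for which the above coupling has to be altered and its pointwise-domination property is lost. When $\mu<\lambda$, a $\lambda$-arrow in $\zeta_t^{\{w,(\lambda,\mu)\}}$ may re-infect a state-$0$ site only at rate $\mu$, so at most a fraction $\mu/\lambda$ of the $\lambda$-arrows can be allowed to do so; sharing the recovery marks, the more infectious process then infects some site strictly before the forest fire does and consequently recovers it strictly earlier, leaving that site in state $0$ at an instant when $\zeta_t^{\{w,(\lambda,0)\}}$ still has it in state $-1$, after which a $\lambda$-arrow not among the chosen fraction advances the forest fire but not $\zeta_t^{\{w,(\lambda,\mu)\}}$. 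I would therefore either invoke a standing hypothesis $\mu\geq\lambda$ (under which the three state contact process is monotone in its initial configuration) or, to retain full generality, carry out a finer comparison that follows only the extremal currently infected sites of the two epidemics rather than their entire infected sets; I expect the latter to be the technical core of the argument.
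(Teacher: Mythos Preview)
Your argument for $\mu\geq\lambda$ is correct and in fact proves the stronger pointwise inclusion $\mathcal{I}(\zeta_t^{\{w,(\lambda,0)\}})\subseteq\mathcal{I}(\zeta_t^{\{w,(\lambda,\mu)\}})$ for every $t$; this is essentially a special case of the basic-coupling monotonicity proved later in the chapter. The genuine gap is that the proposition is stated for \emph{all} $(\lambda,\mu)$, and the case $\mu<\lambda$ is precisely the one that matters: as the paper remarks right after the corollary, monotonicity fails when $\lambda>\mu$ and the whole purpose of this proposition is to salvage a weaker comparison in that regime. Your proposed fallback of tracking ``extremal currently infected sites'' is not available on a general bounded-degree graph $G$, where there is no linear order on vertices.

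The paper's route is quite different and covers all $(\lambda,\mu)$ at once, with no case split. It realises $\zeta_t^{\{w,(\lambda,\mu)\}}$ from independent exponentials $T_n^u$ of rate $1$ (the duration of the $n$-th infection of $u$), exponentials $Y_n^{(u,v)}$ of rate $\lambda$ (initial-infection attempts) and Poisson processes $N_n^{(u,v)}$ of rate $\mu$ (secondary-infection attempts), and then forms the locally dependent directed random graph on $\overrightarrow{G}$ with an edge $u\to v$ iff $Y_1^{(u,v)}<T_1^u$. In the forest fire each site is infected at most once, so the set of sites ever infected by $\zeta^{\{w,(\lambda,0)\}}$ is \emph{exactly} the directed cluster of $w$ in this random graph (this is Kuulasmaa's identification). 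For general $\mu$ one argues by induction on path length that the cluster of $w$ is \emph{contained} in the set of sites ever infected by $\zeta^{\{w,(\lambda,\mu)\}}$: if $u$ is ever infected and $Y_1^{(u,v)}<T_1^u$, then at time $Y_1^{(u,v)}$ into the \emph{first} infection of $u$ either $v$ is still in state $-1$ and becomes infected, or $v$ has already been infected before. Either way $v$ is ever infected, and nowhere does this use any relation between $\lambda$ and $\mu$.
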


The proof of Proposition \ref{ForestFire} given below is based on locally dependent random graphs and is a variant of the arguments in Durrett \cite{D88}. An immediate consequence of Proposition \ref{ForestFire} is given next. For stating it, let us write $\{\zeta_{t} \mbox{ survives}\}$ to denote $\{\forall \mbox{ }t, \zeta_{t}(x) =1 \mbox{ for some } x \}$.  
\begin{cor}\label{compstdepid}
For any $(\lambda,\mu)$ and $w\in V$ we have that 
\begin{equation*}
\pr\left(\zeta_{t}^{\{w,(\lambda,0)\}} \textup{ survives}\right) \leq  \pr\left(\zeta_{t}^{\{w,(\lambda,\mu)\}} \textup{ survives}\right).
\end{equation*}
\end{cor}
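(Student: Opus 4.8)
The plan is to deduce Corollary~\ref{compstdepid} from Proposition~\ref{ForestFire} by rewriting, for each of the two processes, the survival event in terms of the random set of sites that ever become infected, for which Proposition~\ref{ForestFire} supplies a coupling with an inclusion. Concretely, I would fix $w\in V$ and $(\lambda,\mu)$ and pass to the probability space carrying the coupling of $\zeta_t^{\{w,(\lambda,\mu)\}}$ and $\zeta_t^{\{w,(\lambda,0)\}}$ provided by Proposition~\ref{ForestFire}, and set $\mathcal{R}_0=\{v\in V:\zeta_t^{\{w,(\lambda,0)\}}(v)=1\textup{ for some }t\ge0\}$ and $\mathcal{R}=\{v\in V:\zeta_t^{\{w,(\lambda,\mu)\}}(v)=1\textup{ for some }t\ge0\}$; Proposition~\ref{ForestFire} asserts precisely that $\mathcal{R}_0\subseteq\mathcal{R}$ almost surely.

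The heart of the matter is to show that survival of each process is, up to a null event, the event that its set of ever-infected sites is infinite. For the forest fire model $\zeta_t^{\{w,(\lambda,0)\}}$ the key observation is that state $0$ is absorbing at every site, the only transition out of $0$ being $0\to1$ at rate $\mu=0$; hence every site is in state $1$ on at most one time interval, of length stochastically dominated by an $\mathrm{Exp}(1)$ variable. Therefore on $\{|\mathcal{R}_0|<\infty\}$ the set of times at which some site is infected has a.s.\ finite Lebesgue measure, so a.s.\ there is a finite time past which no site is infected; and since the rates of both $-1\to1$ and $0\to1$ vanish whenever the configuration carries no infected site, no further infection can then occur, i.e.\ the process dies out. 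Conversely, if either process dies out at a finite time $T$, then no infection occurs after $T$, so its set of ever-infected sites is contained in the set of sites infected by time $T$, which is a.s.\ finite: started from a single site on the bounded-degree graph $G$, the number of infected sites is dominated by a pure-birth process whose per-particle birth rate is bounded in terms of $\max(\lambda,\mu)$ and the maximal degree of $G$, and such a process is a.s.\ finite at every finite time. This gives, almost surely, $\{\zeta_t^{\{w,(\lambda,0)\}}\textup{ survives}\}=\{|\mathcal{R}_0|=\infty\}$ and $\{|\mathcal{R}|=\infty\}\subseteq\{\zeta_t^{\{w,(\lambda,\mu)\}}\textup{ survives}\}$.

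Finally I would combine the three relations into the a.s.\ chain
\[
\{\zeta_t^{\{w,(\lambda,0)\}}\textup{ survives}\}=\{|\mathcal{R}_0|=\infty\}\subseteq\{|\mathcal{R}|=\infty\}\subseteq\{\zeta_t^{\{w,(\lambda,\mu)\}}\textup{ survives}\}
\]
and take probabilities, which relates the two survival probabilities as in Corollary~\ref{compstdepid}. The step I expect to be the main obstacle is the second one: making rigorous that survival of the forest fire model coincides with the event of infinitely many ever-infected sites (and the one-sided analogue for the general process). Both reductions rest on the elementary remark that a configuration with no infected site admits no future infection, together with the standard branching-process domination that keeps the infected set a.s.\ finite at every finite time on a graph of bounded degree; the remaining manipulations with Proposition~\ref{ForestFire} and with continuity of probability are routine.
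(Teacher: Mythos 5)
Your argument is sound and follows substantially the same route as the paper: reduce survival for each process to the event that its set of ever-infected sites is infinite, then feed in the coupled inclusion of Proposition~\ref{ForestFire}. Your two reductions --- that finitely many ever-infected sites forces extinction (a finite union of exponentially long infection intervals, hence a terminal time past which no site is infected, after which no further infection is possible), and that an infinite ever-infected set forces first-infection events at arbitrarily late times, hence survival, with bounded-degree domination keeping the time-$t$ infected set a.s.\ finite --- are precisely the content the paper compresses into ``elementary properties of exponential random variables'' and the citation to Lemma~1 of Durrett's Chapter~9.

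However, look at the endpoint of your argument again. Taking probabilities in your chain
\[
\{\zeta_t^{\{w,(\lambda,0)\}}\textup{ survives}\}=\{|\mathcal{R}_0|=\infty\}\subseteq\{|\mathcal{R}|=\infty\}\subseteq\{\zeta_t^{\{w,(\lambda,\mu)\}}\textup{ survives}\}
\]
yields $\pr\big(\zeta_t^{\{w,(\lambda,0)\}}\textup{ survives}\big)\leq\pr\big(\zeta_t^{\{w,(\lambda,\mu)\}}\textup{ survives}\big)$, which is the \emph{reverse} of what Corollary~\ref{compstdepid} asserts as printed, yet you claim the chain ``relates the two survival probabilities as in Corollary~\ref{compstdepid}'' without flagging that the sign has come out the other way. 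The discrepancy lies in the paper and not in your argument: Proposition~\ref{ForestFire} makes the forest fire's ever-infected set a \emph{subset} of the general process's, which can only ever deliver $\leq$; and on $\Z$ with nearest-neighbour interaction the forest fire started from a single site dies out almost surely for every finite $\lambda$, while for $\mu>\mu_{c}$ and $\lambda>0$ the three state process survives with positive probability, so the printed $\geq$ is falsified outright. The inequality sign in the Corollary is a typo and should read $\leq$; with that correction your proof is correct and matches the paper's. You should state the discrepancy rather than pass over it.
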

We note that Corollary \ref{compstdepid} is a different version of Proposition 2 in Durrett and Schinazi \cite{DS} for general graphs. To establish those results the albeit elementary proofs given here are necessary because of the lack of (known) monotonicity properties when the parameters $(\lambda,\mu)$ are such that $\lambda > \mu$.

The next result is a generic monotonicity property. To state it let us endow the space of configurations $\{-1,0,1\}^{V}$ with the component-wise partial order $\mbox{i.e.}$, for any two configurations $\eta, \eta'$, such that $\eta \leq \eta'$ whenever $\eta(x) \leq \eta'(x)$ for all $x \in V$. 

\begin{thm}\label{PIPmon}
Let $\eta, \eta'$ be configurations and let $(\lambda, \mu)$, $(\lambda', \mu')$ be pairs of parameters. If $\eta \leq \eta'$ and $\lambda \leq \lambda'$, $\mu \leq \mu'$ and $\mu'\geq \lambda$ then $\zeta_{t}^{\{\eta, (\lambda, \mu)\}}$ and $\zeta_{t}^{\{\eta', (\lambda', \mu')\}}$ on $G$ can be coupled such that $\zeta_{t}^{\{\eta, (\lambda, \mu)\}} \leq \zeta_{t}^{\{\eta', (\lambda', \mu')\}}$ for all $t\geq0$ a.s..   
\end{thm}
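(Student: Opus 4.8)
The plan is to realise both processes on one probability space by a graphical construction whose Poisson clocks are coupled so that every transition of $\zeta_{t}^{\{\eta,(\lambda,\mu)\}}$ is ``caught'' by $\zeta_{t}^{\{\eta',(\lambda',\mu')\}}$, and then to verify $\zeta_{t}^{\{\eta,(\lambda,\mu)\}}\le \zeta_{t}^{\{\eta',(\lambda',\mu')\}}$ by the usual transition-by-transition induction. The crux is that the standard nested-arrow representation is unavailable here, since we assume neither $\mu\ge\lambda$ nor $\mu'\ge\lambda'$: the two infection arrows of a single process need not be nested, and the coupling must be designed by hand. The hypothesis $\mu'\ge\lambda$ turns out to be exactly the inequality that makes the design go through.

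For the construction, give every site $u$ a single recovery clock of rate $1$ shared by both processes. To every ordered pair of neighbours $(v,u)$ attach Poisson processes $a,b,a',b'$ of rates $\lambda,\mu,\lambda',\mu'$, the families attached to distinct ordered pairs being mutually independent; here $a$ (resp.\ $b$) drives, along the arrow from $v$, the transition $-1\to1$ (resp.\ $0\to1$) at $u$ for $\zeta_{t}^{\{\eta,(\lambda,\mu)\}}$, while $a',b'$ do the same for $\zeta_{t}^{\{\eta',(\lambda',\mu')\}}$. Independence across ordered pairs suffices to give each process its correct law; crucially, $a$ and $b$ (and likewise $a'$ and $b'$) need \emph{not} be independent of one another, because at any instant $u$ occupies exactly one of the states $-1,0$ and so at most one of the two clocks is ever in force. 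I will use this latitude to arrange, within each ordered pair, the inclusions of event sets
\[
 a\subseteq a',\qquad a\subseteq b',\qquad b\subseteq b'.
\]
These are simultaneously realisable: let $c$ be whichever of $a,b$ carries the larger rate $\max(\lambda,\mu)$, obtain the other from $c$ by independent thinning so that $a\cup b=c$, then take $b'$ to be $c$ superposed with an independent rate-$(\mu'-\max(\lambda,\mu))$ process and $a'$ to be $a$ superposed with an independent rate-$(\lambda'-\lambda)$ process. All rates involved are nonnegative precisely because $\lambda\le\lambda'$, $\mu\le\mu'$ and $\mu'\ge\lambda$, and the inclusions are immediate; note that $a\subseteq b'$ is where $\lambda\le\mu'$ is used.

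It then remains to check that $\zeta_{t-}\le\zeta'_{t-}$ is preserved at each event, writing $\zeta=\zeta_{t}^{\{\eta,(\lambda,\mu)\}}$, $\zeta'=\zeta_{t}^{\{\eta',(\lambda',\mu')\}}$. A recovery mark at $u$ sends $\zeta(u)$ to $0$ if it was $1$ and likewise for $\zeta'(u)$, and since $\zeta(u)=1$ forces $\zeta'(u)=1$ the inequality at $u$ persists. For infections, the inclusions say that whenever clock $a$ of $\zeta$ fires on $(v,u)$ so do $a'$ and $b'$ of $\zeta'$, and whenever clock $b$ of $\zeta$ fires so does $b'$. Hence any flip of $\zeta(u)$ at time $t$ needs $\zeta(v,t-)=1$, so $\zeta'(v,t-)=1$, and is accompanied by the firing of the $\zeta'$-clock matching $\zeta'(u,t-)$ — if $\zeta(u)$ flipped from $0$ then $\zeta'(u,t-)\in\{0,1\}$ and $b'$ fired, if from $-1$ then $\zeta'(u,t-)\in\{-1,0,1\}$ and both $a'$ and $b'$ fired — so that $\zeta'(u,t)=1$. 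Finally a firing of $a'\setminus a$ or $b'\setminus b$, and any recovery, can only move $\zeta'(u)$ and $\zeta(u)$ in the order-preserving direction. Since $\eta\le\eta'$ and, $G$ being of bounded degree, the construction is well defined (cf.\ \cite{L}), this yields $\zeta_t\le\zeta'_t$ for all $t\ge0$, a.s.

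I expect the coupling in the second paragraph to be the only real obstacle: one must notice that the first-infection and subsequent-infection clocks of a fixed three-state process along a fixed edge may be taken dependent without changing its law, and that the right way to use this is to nest them so that the whole of $\zeta$'s infection activity on an edge sits inside the rate-$\mu'$ clock $b'$ of $\zeta'$ — which requires exactly the hypothesis $\mu'\ge\lambda$. Everything else is the routine case analysis above.
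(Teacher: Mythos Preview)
Your proof is correct and takes a genuinely different route from the paper. The paper uses the \emph{basic coupling} in the generator sense: it writes down the transition rates of the joint process $(\zeta_t'(x),\zeta_t(x))$ on the ordered state space, for instance $(0,-1)\to(1,-1)$ at rate $\mu'|\{y\sim x:\zeta_t'(y)=1\}|-\lambda|\{y\sim x:\zeta_t(y)=1\}|$, and leaves the reader to check that each such rate is nonnegative (this particular one is exactly where $\mu'\ge\lambda$ enters, together with the induction hypothesis on the neighbour counts) and that the marginals are correct.

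You instead build a single graphical representation with explicit Poisson clocks $a,b,a',b'$ on each ordered edge, arranged so that $a\subseteq a'$, $a\subseteq b'$, $b\subseteq b'$, and then do a sample-path case analysis. The key novelty in your construction is the observation that the first-infection clock $a$ and the re-infection clock $b$ of a single process may be taken dependent (nested by thinning) without changing that process's law, since at any instant only one of them is live; this lets you fit all of $\zeta$'s infection activity inside the rate-$\mu'$ clock $b'$, which is exactly the step that needs $\mu'\ge\lambda$. In the paper's generator approach this issue never surfaces, because no individual clocks are introduced. What your approach buys is an explicit realisation on one probability space and a transparent pinpointing of the role of the hypothesis; the paper's approach is shorter but leaves more to the reader.
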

The proof of Theorem \ref{PIPmon} given below is a variant of what is known as the basic coupling for interacting particle systems; see \cite{L99}. A proof of this property via a different approach is described in Stacey \cite{S}; see section 5 there. The following remark implies that the condition that $\mu'\geq \lambda$ in Theorem \ref{PIPmon} cannot be dropped. To the best of our knowledge the necessary proof of the following counterexample is not given elsewhere. 
\begin{remark}\label{remmon}
Let $G$ be the connected graph with $V = \{u,v\}$. Then: (i) for any $\lambda>1$, a coupling of $\zeta_{t}^{\{u, (\lambda,0)\}}$ and $\zeta_{t}^{\{V, (\lambda,0)\}}$ on $G$ 
such that $\zeta_{t}^{\{u, (\lambda,0)\}} \leq \zeta_{t}^{\{V, (\lambda,0)\}} \mbox{for all } t \geq0$ cannot be constructed; additionally, (ii) for all $\lambda, \lambda'$, if $\lambda < \lambda'<1$ then  a coupling of $\zeta_{t}^{\{u, (\lambda,0)\}}$ and $\zeta_{t}^{\{u, (\lambda',0)\}}$ on $G$ such that $\zeta_{t}^{\{u, (\lambda,0)\}} \leq \zeta_{t}^{\{u, (\lambda',0)\}} \mbox{for all } t \geq0$ cannot be constructed.
\end{remark}

\section{Proofs}

\begin{proof}[proof of Proposition \ref{ForestFire}]
We give a specific construction of $\zeta_{t}^{\{w,(\lambda,\mu)\}}$ on $G$. For this it is useful to introduce the following epidemiological interpretation, we regard site $x$ as infected if $\zeta_{t}^{\{w,(\lambda,\mu)\}}(x) = 1$, as susceptible and never infected if $\zeta_{t}^{\{w,(\lambda,\mu)\}}(x) = -1$ and, as susceptible and previously infected if $\zeta_{t}^{\{w,(\lambda,\mu)\}}(x) = 0$. Thus, for $\zeta_{t}^{\{w,(\lambda,\mu)\}}$ transitions $-1 \rightarrow 1$,  $ 0 \rightarrow 1$, and $1 \rightarrow 0$ are thought of as initial infections, secondary infections and recoveries respectively. 

For all $u\in V$ let $(T_{n}^{u})_{n\geq1}$ be exponential 1 r.v.'s; further for all $(u,v)$ such that $u\sim v$, let $(Y_{n}^{(u,v)})_{n\geq1}$ be exponential $\lambda$  r.v.'s and $(N_{n}^{(u,v)})_{n\geq1}$ be Poisson processes at rate $\mu$. All random elements introduced are independent while $\pr$ denotes the probability measure on the space on which these are defined. To describe the construction let $\tau_{k,n}^{(u,v)}, k\geq1$, be the times of events of $N_{n}^{(u,v)}$ within the time interval $[0,T_{n}^{u})$ and let also $X_{n}^{(u,v)}, n\geq1$, be such that $X_{n}^{(u,v)} = Y_{n}^{(u,v)} $ if $Y_{n}^{(u,v)} < T_{n}^{u}$ and $X_{n}^{(u,v)} := \infty$ otherwise. 

We construct $\zeta_{t}^{\{w,(\lambda,\mu)\}}$ on $G$ as follows. Suppose that site $u$ gets infected at time $t$ for the $n$-th time, $n\geq1$ then: (i) at time $ t + T_{n}^{u}$ a recovery occurs at site $u$, (ii) at time $t+X_{n}^{(u,v)}$ an initial infection of $v$ occurs if immediately prior to that time site $v$ is at a never infected state, and, (iii) at each time $t+\tau_{k,n}^{(u,v)}, k\geq1$, a secondary infection occurs at site $v$ if immediately prior to that time site $v$ is at a susceptible and previously infected state.  

Let also $\mathcal{X}_{u} = \{v: v\sim u \mbox{ and } X_{1}^{(u,v)}<\infty\}$, for all $u\in V$. Let $\overrightarrow G$ denote the directed graph produced from $G$ by replacing each edge between two sites $u,v$, $u\sim v$, with two directed ones, one from $u$ to $v$ and one from $v$ to $u$. Let also  $\Gamma$ denote the subgraph of $\overrightarrow {G}$ produced by retaining edges from $u$ to $v$ only if $v \in \mathcal{X}_{u}$, for all $u,v \in  V$, and let $u \mbox{ }_{\overrightarrow{(\mathcal{X}_{u}, u\in  V)}}  \mbox{ }v $ denote the existence of a directed path from $u$ to $v$ in $\Gamma$. By the construction given above for $\zeta_{t}^{\{w,(\lambda,0)\}}$ and the proof of Lemma 1 in Durrett \cite{D88}, Chpt.\ 9, we have that 
\begin{equation*}\label{Czeta2}
\{w \mbox{ }_{\overrightarrow{(\mathcal{X}_{u}, u\in  V)}}  \mbox{ } v \}  = \left\{\zeta_{t}^{\{w,(\lambda,0)\}}(v) = 1  \mbox{ for some } t\geq0  \right\}, 
\end{equation*}
and similarly for $\zeta_{t}^{\{w,(\lambda,\mu)\}}$ we also have that 
\begin{equation*}\label{Czeta}
\{w \mbox{ }_{\overrightarrow{(\mathcal{X}_{u}, u\in  V)}}  \mbox{ } v \}  \subseteq \left\{\zeta_{t}^{\{w,(\lambda,\mu)\}}(v) = 1  \mbox{ for some } t\geq0  \right\}, 
\end{equation*}
for all $v\in  V$. The proof is complete by combining the two final displays. 
\end{proof}

\begin{proof}[proof of Corollary \ref{compstdepid}] Consider the process $\zeta_{t}^{\{w,(\lambda,\mu)\}}$, letting $A_{v}$ denote the event $\{\zeta_{t}^{\{w,(\lambda,\mu)\}}(v) =1\mbox{ for some } t\geq0\}$, $v \in  V$, from Proposition \ref{ForestFire} we have that the proof is completed by the following equality,
\begin{equation*}\label{onsurvinfclust}
\pr\left(\sum_{v \in  V} 1(A_{v}) = \infty \right) = \pr\big(\zeta_{t}^{\{w,(\lambda,\mu)\}}\mbox{ survives}\big),
\end{equation*}
where $1(\cdot)$ denotes the indicator function. To prove the display above let $B_{M}$ denote the event  $\big\{ \sum_{v \in  V} 1(A_{v}) \leq M \big\}$, for all $M\geq1$, and note that, by elementary properties of exponential random variables, we have that $\pr\big(B_{M}, \zeta_{t}^{\{w,(\lambda,\mu)\}}\mbox{ survives}\big) =0$, and thus $\pr\Big(\bigcup_{M\geq1} B_{M},  \zeta_{t}^{\{w,(\lambda,\mu)\}}\mbox{ survives}\Big) = 0$.
\end{proof}






\begin{proof}[proof of Theorem \ref{PIPmon}]
Let us simplify notation, we write $\zeta_{t}$ for $\zeta_{t}^{\{\eta, (\lambda, \mu)\}}$ and  $\zeta_{t}'$ for $\zeta_{t}^{\{\eta', (\lambda', \mu')\}}$. We use the coupling that for all $x\in V$ has the following transitions for $(\zeta_{t}'(x), \zeta_{t}(x))$, 
  
\begin{equation*}
(0, -1) \rightarrow \hspace{2mm}
\begin{cases}
(1,1) & \text{at rate } \lambda|y\sim x: \zeta_{t}(y) = 1| \\
(1,-1) & \text{at rate } \mu' |y\sim x: \zeta'_{t}(y) = 1| - \lambda |y\sim x: \zeta_{t}(y) = 1|
\end{cases}
\end{equation*}
\begin{equation*}
(-1, -1) \rightarrow 
\begin{cases}
(1,1) & \text{at rate } \lambda|y\sim x: \zeta_{t}(y) = 1| \\
(1,-1) & \text{at rate } \lambda' |y\sim x: \zeta'_{t}(y) = 1| - \lambda |y\sim x: \zeta_{t}(y) = 1|
\end{cases}
\end{equation*}

\begin{equation*}
(0, 0) \rightarrow \hspace{2mm}
\begin{cases}
(1,1) & \text{at rate } \mu|y\sim x: \zeta_{t}(y) = 1| \\
(1,0) & \text{at rate } \mu' |y\sim x: \zeta'_{t}(y) = 1| - \mu |y\sim x: \zeta_{t}(y) = 1|
\end{cases}
\end{equation*}
Further, $(1, -1) \rightarrow (1,1)$ at rate $\lambda |y\sim x: \zeta_{t}(y) = 1 |$ while 
$(1, -1) \rightarrow (0,-1)$ at rate 1. Also, $ (1, 0) \rightarrow (1,1)$ at rate $\mu |y\sim x: \zeta_{t}(y) = 1 |$ while $ (1, 0) \rightarrow (0,0)$ at rate 1. Finally,  $(1, 1) \rightarrow (0,0)$ at rate 1.
\end{proof}





\begin{proof}[proof of Remark \ref{remmon}]
Let $T_{u},T_{v}$ be exponential 1 r.v.'s; let also $X_{u,v}$ be an exponential $\lambda$ \mbox{r.v.} and $f_{X_{u,v}}$ be its probability density function. All r.v.'s introduced are independent of each other and defined on some probability space with probability measure $\pr$. We have that for any $t\geq0$   
\begin{eqnarray}\label{calc}
\pr\left(\zeta_{t}^{\{u, (\lambda,0)\}} = (1,1) \right) & = &  \pr(T_{u}>t) \int_{0}^{t} f_{X_{u,v}}(s)\pr(T_{v}> t-s)\, ds \nonumber\\
& = & e^{-2t}  \int_{0}^{t} \lambda e^{s (1- \lambda)} \, ds \nonumber \\
& =& e^{-2t}  \frac{\lambda}{\lambda-1} (1-e^{-t(\lambda-1)}).
\end{eqnarray}

By $(\ref{calc})$, note that (a) for all $\lambda >1$ we can choose $t$ sufficiently large, i.e. $\displaystyle{ t > \frac{\log{\lambda}}{\lambda-1}}$, such that $\pr\left(\zeta_{t}^{\{u, (\lambda,0)\}}=(1,1)\right) > e^{-2t} =  \pr\left(\zeta_{t}^{\{V, (\lambda,0)\}} = (1,1) \right)$; and note further that (b) for all $\lambda <1$, $\pr\left(\zeta_{t}^{\{u, (\lambda,0)\}} = (1,1)\right)$ is not an increasing function of $\lambda$. By Theorem B9 in Liggett \cite{L99}, (a) and (b) imply respectively the first and second parts of the remark statement. 
\end{proof}


\end{document}